\definecolor{lavender}{rgb}{0.4,0,1.0}
\let\save@mathaccent\mathaccent
\newcommand*\if@single[3]{%
  \setbox0\hbox{${\mathaccent"0362{#1}}^H$}%
  \setbox2\hbox{${\mathaccent"0362{\kern0pt#1}}^H$}%
  \ifdim\ht0=\ht2 #3\else #2\fi
  }
\newcommand*\rel@kern[1]{\kern#1\dimexpr\macc@kerna}
\newcommand*\widebar[1]{\@ifnextchar^{{\wide@bar{#1}{0}}}{\wide@bar{#1}{1}}}
\newcommand*\wide@bar[2]{\if@single{#1}{\wide@bar@{#1}{#2}{1}}{\wide@bar@{#1}{#2}{2}}}
\newcommand*\wide@bar@[3]{%
  \begingroup
  \def\mathaccent##1##2{%
    \let\mathaccent\save@mathaccent
    \if#32 \let\macc@nucleus\first@char \fi
    \setbox\z@\hbox{$\macc@style{\macc@nucleus}_{}$}%
    \setbox\tw@\hbox{$\macc@style{\macc@nucleus}{}_{}$}%
    \dimen@\wd\tw@
    \advance\dimen@-\wd\z@
    \divide\dimen@ 3
    \@tempdima\wd\tw@
    \advance\@tempdima-\scriptspace
    \divide\@tempdima 10
    \advance\dimen@-\@tempdima
    \ifdim\dimen@>\z@ \dimen@0pt\fi
    \rel@kern{0.6}\kern-\dimen@
    \if#31
      \overline{\rel@kern{-0.6}\kern\dimen@\macc@nucleus\rel@kern{0.4}\kern\dimen@}%
      \advance\dimen@0.4\dimexpr\macc@kerna
      \let\final@kern#2%
      \ifdim\dimen@<\z@ \let\final@kern1\fi
      \if\final@kern1 \kern-\dimen@\fi
    \else
      \overline{\rel@kern{-0.6}\kern\dimen@#1}%
    \fi
  }%
  \macc@depth\@ne
  \let\math@bgroup\@empty \let\math@egroup\macc@set@skewchar
  \mathsurround\z@ \frozen@everymath{\mathgroup\macc@group\relax}%
  \macc@set@skewchar\relax
  \let\mathaccentV\macc@nested@a
  \if#31
    \macc@nested@a\relax111{#1}%
  \else
    \def\gobble@till@marker##1\endmarker{}%
    \futurelet\first@char\gobble@till@marker#1\endmarker
    \ifcat\noexpand\first@char A\else
      \def\first@char{}%
    \fi
    \macc@nested@a\relax111{\first@char}%
  \fi
  \endgroup
}
\newenvironment{enumerate*}%
  {\begin{enumerate}[(I)]%
    \setlength{\itemsep}{10pt}%
    \setlength{\parskip}{0pt}}%
  {\end{enumerate}}
\newtheorem{theorem}{Theorem}[section]
\newtheorem{proposition}[theorem]{Proposition}
\newtheorem{corollary}[theorem]{Corollary}
\newtheorem{lemma}[theorem]{Lemma}
\theoremstyle{definition}
\newtheorem{remark}[theorem]{Remark}
\newtheorem{example}[theorem]{Example}
\newcommand{\Inv}{\mathrm{Inv}}
\newcommand{\LL}{{\mathsf{L}}}
\newcommand{\DD}{\mathbf{L}}
\newcommand{\End}{\mathrm{End}}
\newcommand{\demark}{\mathrm{demark}}
\newcommand{\dom}{\mathrm{dom}}
\newcommand{\ch}{\mathrm{ch}}
\newcommand{\Res}{\mathrm{Res}}
\newcommand{\Ind}{\mathrm{Ind}}
\newcommand{\QSym}{\ensuremath{\mathrm{QSym}}}
\newcommand{\Peak}{\ensuremath{\mathrm{Peak}}}
\newcommand{\Val}{\ensuremath{\mathrm{Val}}}
\newcommand{\stand}{\ensuremath{\mathrm{std}}}
\newcommand{\Des}{\ensuremath{\mathrm{Des}}}
\newcommand{\SDT}{\ensuremath{\mathsf{SDT}}}
\newcommand{\SSShDT}{\ensuremath{\mathsf{SSShDT}}}
\newcommand{\SShDT}{\ensuremath{\mathsf{SShDT}}}
\newcommand{\MSShDT}{\ensuremath{\mathsf{MSShDT}}}
\newcommand{\wt}{\ensuremath{\mathrm{wt}}}
\newcommand{\dfn}[1]{\textcolor{blue}{\emph{#1}}}
\begin{document}

\title{0-Hecke Modules, Domino Tableaux, and Type-$B$ Quasisymmetric Functions}
\subjclass[2010]{}

\author[Colin Defant]{Colin Defant}
\address[]{Department of Mathematics, Harvard University, Cambridge, MA 02139, USA}
\email{colindefant@gmail.com}

\author[Dominic Searles]{Dominic Searles}
\address[]{Department of Mathematics and Statistics, University of Otago, Dunedin 9016, New Zealand}
\email{dominic.searles@otago.ac.nz}

\maketitle

\begin{abstract}
We extend the notion of ascent-compatibility from symmetric groups to all Coxeter groups, thereby providing a type-independent framework for constructing families of modules of $0$-Hecke algebras. We apply this framework in type $B$ to give representation-theoretic interpretations of a number of noteworthy families of type-$B$ quasisymmetric functions. Next, we construct modules of the type-$B$ $0$-Hecke algebra corresponding to type-$B$ analogues of Schur functions and introduce a type-$B$ analogue of Schur $Q$-functions; we prove that these \emph{shifted domino functions} expand positively in the type-$B$ peak functions. We define a type-$B$ analogue of the $0$-Hecke--Clifford algebra, and we use this to provide representation-theoretic interpretations for both the type-$B$ peak functions and the shifted domino functions. 
We consider the modules of this algebra induced from type-$B$ $0$-Hecke modules constructed via ascent-compatibility and prove a general formula, in terms of type-$B$ peak functions, for the type-$B$ quasisymmetric characteristics of the restrictions of these modules.
\end{abstract}

\section{Introduction}\label{sec:intro}

The 0-Hecke algebra $H_W(0)$ of a Coxeter system $(W,S)$ is the $q=0$ specialization of the Hecke algebra $H_W(q)$ of $W$. In type $A$, representations of $0$-Hecke algebras can be interpreted in terms of the Hopf algebra $\QSym$ of quasisymmetric functions; specifically, the \emph{quasisymmetric characteristic} map \cite{DKLT} defines an isomorphism between the Grothendieck group of finite-dimensional $0$-Hecke modules and $\QSym$. Much recent work, including \cite{Bardwell.Searles, BBSSZ, NSvWVW:0Hecke, Searles:0Hecke, TvW:1}, has been devoted to giving representation-theoretic interpretations of important bases of quasisymmetric functions by constructing families of type-$A$ 0-Hecke modules that are associated to these bases via the quasisymmetric characteristic map. 

Recently, a uniform method for constructing type-$A$ $0$-Hecke modules was introduced in \cite{Searles:0HC}, based on an \emph{ascent-compatibility} condition defined on subsets of symmetric groups. As far as we are aware, all of the $0$-Hecke modules that have been constructed for notable families of quasisymmetric functions can be obtained via this ascent-compatibility framework.  
Our first main result, which we present in \cref{sec:ascentcompatible}, consists of a type-independent definition of ascent-compatibility for all (not necessarily finite) Coxeter groups together with a consequent extension of the construction method for $0$-Hecke modules from \cite{Searles:0Hecke} from type~$A$ to all Coxeter groups. We show that any convex subset of the left weak order that has a unique maximal element is ascent-compatible, allowing the construction of large families of $0$-Hecke modules for any Coxeter system.

We then specialize our attention to the type $B$ case. There is a type-$B$ analogue of the type-$A$ connection between $0$-Hecke algebras and quasisymmetric functions: Chow \cite{Chow} introduced the ring $\QSym^B$ of type-$B$ quasisymmetric functions, and Huang \cite{Huang} defined a quasisymmetric characteristic map from the Grothendieck group of finite-dimensional type-$B$ 0-Hecke modules to $\QSym^B$.  
Consequently, ascent-compatible subsets of hyperoctahedral groups give rise to families of type-$B$ quasisymmetric functions that have representation-theoretic significance. 
As an application of our methods, we construct type-$B$ 0-Hecke modules corresponding to families of type-$B$ quasisymmetric functions defined by Mayorova and Vassilieva \cite{MV} that exhibit a type-$B$ analogue of Schur positivity.

In \cref{sec:domino}, we first consider 
the \emph{domino functions} introduced by Mayorova and Vassilieva \cite{MV}. These type-$B$ analogues of Schur functions are defined in terms of \emph{domino tableaux} of partition shape. 
We give a representation-theoretic interpretation of these functions by defining an action of the type-$B$ $0$-Hecke algebra on standard domino tableaux of a given shape: the quasisymmetric characteristics of the resulting type-$B$ $0$-Hecke modules are precisely the type-$B$ Schur functions. 
We then consider the semistandard domino tableaux of shifted partition shape introduced by Chemli \cite{Chemli} and used to provide an expansion formula for products of Schur $Q$-functions.
We define a variant of these semistandard domino tableaux whose generating functions are in $\QSym^B$. These functions, which we call \emph{shifted domino functions}, form type-$B$ analogues of Schur $Q$-functions. We prove that shifted domino functions expand positively in the type-$B$ peak functions introduced by Petersen \cite{Petersen}. Our expansion is indexed by type-$B$ peak sets of standard shifted domino tableaux, analogously to how peak sets of standard shifted tableaux index the expansion of Schur $Q$-functions in (type-$A$) peak functions.

In type~$A$, the \emph{$0$-Hecke--Clifford algebras} are obtained by combining the $0$-Hecke algebras and the Clifford algebras. Bergeron, Hivert, and Thibon defined a quasisymmetric characteristic map \cite{BHT} from the Grothendieck group of finite-dimensional $0$-Hecke--Clifford modules to the type-$A$ peak algebra of Stembridge \cite{Stembridge:enriched}. Type-$A$ ascent-compatibility was applied in \cite{Searles:0HC} to construct $0$-Hecke--Clifford modules whose quasisymmetric characteristics are certain bases of the peak algebra. In \cref{sec:0HeckeClifford}, we introduce a type-$B$ analogue of the $0$-Hecke--Clifford algebras and show that the isomorphism classes of the modules of these algebras induced from the simple type-$B$ $0$-Hecke modules are indexed by the type-$B$ peak sets. Moreover, we show that the quasisymmetric characteristics of the restrictions of these modules to the type-$B$ $0$-Hecke algebra are precisely the type-$B$ peak functions of Petersen \cite{Petersen}, thereby giving representation-theoretic significance to these functions. 

In \cref{sec:CliffordApplications}, we prove a general result that allows us to compute the type-$B$ quasisymmetric characteristic of the restriction of an induction of a type-$B$ $0$-Hecke module. This result applies whenever the module is obtained via the type-$B$ ascent compatibility framework. We also apply it in order to give representation-theoretic significance to our aforementioned shifted domino functions.

\section{Preliminaries}

\subsection{Quasisymmetric functions}\label{subsec:quasisymmetric} 

We use the notation $[n]=\{1,\ldots,n\}$ and $[0,n]=\{0,1,\ldots,n\}$. A \dfn{composition} is a finite tuple of positive integers. If $\alpha=(\alpha_1,\ldots,\alpha_k)$ is a composition such that $\alpha_1+\cdots+\alpha_k=n$, then we say $\alpha$ is a composition of $n$ and write $\alpha\vDash n$; the integers $\alpha_i$ are called the \dfn{parts} of $\alpha$. If $\alpha_1\geq\cdots\geq\alpha_k$, then we say $\alpha$ is a \dfn{partition} of $n$ and write $\alpha\vdash n$. The \dfn{descent set} of $\alpha$ is the set $\Des(\alpha)=\{\alpha_1, \alpha_1+\alpha_2, \ldots, \alpha_1 + \alpha_2 + \cdots + \alpha_{k-1}\}$. The map $\alpha\mapsto\Des(\alpha)$ is a bijection from the set of compositions of $n$ to the set of subsets of $[n-1]$.

For $\alpha\vDash n$, the \dfn{fundamental quasisymmetric function} $F_\alpha$ is defined by
\[F_\alpha = \sum_{\substack{1\le i_1 \le i_2 \le \cdots \le i_n\\ j\in \Des(\alpha) \implies i_j<i_{j+1}}}x_{i_1}x_{i_2}\cdots x_{i_n}.\]
As $\alpha$ ranges over all compositions, the fundamental quasisymmetric functions $F_{\alpha}$ form a linear basis of the algebra $\QSym$ of quasisymmetric functions. It will be notationally simpler for us to index fundamental quasisymmetric functions (and their variants) by sets instead of compositions. Thus, we will write \[F_I = \sum_{\substack{1\le i_1 \le i_2 \le \cdots \le i_n\\ j\in I \implies i_j<i_{j+1}}}x_{i_1}x_{i_2}\cdots x_{i_n}.\] Note that this definition depends implicitly on $n$; this should not lead to confusion since $n$ will often be fixed. 

\begin{example}
Let $n=4$. The composition $\alpha=(2,1,1)$ has descent set $\{2,3\}$, so \[F_{\alpha}=F_{\{2,3\}}=\displaystyle\sum_{1\leq i_1\leq i_2<i_3<i_4}x_{i_1}x_{i_2}x_{i_3}x_{i_4}.\]
\end{example}

Chow \cite{Chow} introduced type-$B$ analogues of the fundamental quasisymmetric functions that involve an additional variable $x_0$. A \dfn{type-$B$ composition} of $n$ is a tuple $(\alpha_1,\ldots,\alpha_k)$ of integers such that $\alpha_1\geq 0$ and $\alpha_2,\ldots,\alpha_k\geq 1$. As for ordinary (type-$A$) compositions, the descent set of $\alpha$ is the set $\Des(\alpha)=\{\alpha_1, \alpha_1+\alpha_2, \ldots, \alpha_1 + \alpha_2 + \cdots + \alpha_{k-1}\}$. The \dfn{type-$B$ fundamental quasisymmetric function} $F_\alpha^B$ is defined by
\[F^B_\alpha = \sum_{\substack{0=i_0\le i_1 \le \cdots \le i_n\\ j\in \Des(\alpha) \implies i_j<i_{j+1}}}x_{i_1}x_{i_2}\cdots x_{i_n}.\] Note that $x_0$ appears in $F^B_\alpha$ if and only if $0$ is not a part of $\alpha$. As before, we will often abuse notation and index type-$B$ quasisymmetric functions with sets; we write $F_{\Des(\alpha)}^B$ instead of $F_\alpha^B$. Again, this notation suppresses the implicit dependence on $n$. 

\begin{example}
Let $n=4$. Then \[F_{(2,1,1)}^B=F_{\{2,3\}}^B=\sum_{0\leq i_1\leq i_2<i_3<i_4}x_{i_1}x_{i_2}x_{i_3}x_{i_4}\quad\text{and}\quad F_{(0,3,1)}^B=F_{\{0,3\}}^B=\sum_{0<i_1\leq i_2\leq i_3<i_4}x_{i_1}x_{i_2}x_{i_3}x_{i_4}.\]
\end{example}

A subset $P\subseteq [n-1]$ is called a \dfn{peak set} if $1\notin P$ and $P$ does not contain two consecutive integers. The compositions whose descent sets are peak sets are called \dfn{peak compositions}; these are precisely the compositions whose non-final parts are all greater than $1$. Given a peak set $P\subseteq [n-1]$, the \dfn{peak function} $K_P$ is defined as
\[K_P = 2^{|P|+1}\sum_{\substack{J\subseteq [n-1] \\ P\subseteq J\triangle (J+1)}} F_J,\] 
where $J+1 = \{j+1 : j\in J\}$ and $\triangle$ denotes symmetric difference.

Petersen introduced type-$B$ analogues of the peak functions in \cite{Petersen}. A subset $P\subseteq [n-1]$ is called a \dfn{type-$B$ peak set} if $P$ contains no two consecutive integers; note that a type-$B$ peak set is permitted to contain $1$. For each type-$B$ peak set $P$, there are two type-$B$ peak functions $K_{(0,P)}$ and $K_{(1,P)}$ defined by
\[K_{(0,P)} = 2^{|P|}\sum_{\substack{J\subseteq [0,n-1] \\ P\subseteq J\triangle (J+1)}} F^B_J \quad \mbox{and} \quad K_{(1,P)} = 2^{|P|+1}\sum_{\substack{0\in J \subseteq [0,n-1] \\ P\subseteq J\triangle (J+1)}}F^B_J,\]
with the proviso that $K_{(1,P)}$ is defined only if $1\notin P$. 
The type-$B$ peak set associated to a set $I\subseteq[0,n-1]$ is \[\Peak^B(I)=\{p\in [n-1]: p\in I \text{ and }p-1\not\in I\}.\] Let us define \begin{equation}\label{eq:Delta}
\Delta^B(I)=K_{(\zeta(I),\Peak^B(I))},
\end{equation} 
where \[\zeta(I)=\begin{cases} 0 & \mbox{ if } 0\not\in I \\
1 & \mbox{ if } 0\in I.
\end{cases}\]
We will also make use of the \dfn{type-$B$ valley set} of $I$, which we define to be \[\Val^B(I)=\{v\in[n]:v\notin I\text{ and }v-1\in I\}.\] Note that \begin{equation}\label{eq:PeakVal}
\left\lvert\Val^B(I)\right\rvert=\left\lvert\Peak^B(I)\right\rvert+\zeta(I).
\end{equation}

\subsection{0-Hecke algebras and modules}
Given symbols $\gamma,\delta$ and a nonnegative integer $r$, we write $[\gamma\vert\delta]_r$ for the word \[\underbrace{\cdots\gamma\delta}_r\] of length $r$ that alternates between $\gamma$ and $\delta$ and ends with $\delta$. For example, $[\gamma\vert\delta]_4=\gamma\delta\gamma\delta$, while $[\gamma\vert\delta]_5=\delta\gamma\delta\gamma\delta$. 

Let $(W,S)$ be a Coxeter system. Thus, $W$ is generated by $S$, and each element of $S$ is an involution. For all distinct $s,t\in S$, there is an integer $m(s,t)=m(t,s)\in\{2,3,\ldots\}\cup\{\infty\}$ such that $[s\vert t]_{m(s,t)}=[t\vert s]_{m(s,t)}$ (this relation is not present if $m(s,t)=\infty$). The symmetric group $S_n$ has the simple generating set $\{s_1,\ldots,s_{n-1}\}$, where $m(s_i,s_{i+1})=3$ for all $1\leq i\leq n-2$ and $m(s_i,s_j)=2$ whenever $|i-j|\geq 2$. The hyperoctahedral group $B_n$ has the simple generating set $\{s_0,s_1,\ldots,s_{n-1}\}$, where $m(s_0,s_1)=4$, $m(s_0,s_j)=2$ for all $j\geq 2$, and $m(s_i,s_j)$ is the same as for the symmetric group when $i,j\geq 1$.

The \dfn{$0$-Hecke algebra} $H_W(0)$ of the Coxeter system $(W,S)$ is a certain deformation of the group algebra of $W$; it has a generating set $\{\pi_s : s\in S\}$ satisfying the relations \[\pi_s^2=-\pi_s\quad\text{and}\quad [\pi_s\vert\pi_t]_{m(s,t)} = [\pi_t\vert\pi_s]_{m(s,t)}\] for all distinct $s,t\in S$. 

We will mostly focus on $0$-Hecke algebras when $W$ is 
$S_n$ or 
$B_n$. These are referred to as, respectively, the type-$A$ $0$-Hecke algebra $H_n(0)$ and the type-$B$ $0$-Hecke algebra $H_n^B(0)$. The type-$A$ $0$-Hecke 
algebra $H_n(0)$ is generated by $\{\pi_1, \ldots , \pi_{n-1}\}$ satisfying the relations 
\begin{align}
\pi_i^2 & = -\pi_i;\label{eq:Hn(0)_relations1}  \\
\pi_i\pi_j & =  \pi_j\pi_i  \,\,\,\, \mbox{ if } |i-j|\ge 2; \label{eq:Hn(0)_relations2}\\
\pi_i\pi_{i+1}\pi_i & = \pi_{i+1}\pi_i\pi_{i+1}, \label{eq:Hn(0)_relations3}
\end{align}
while the type-$B$ $0$-Hecke algebra $H_n^B(0)$ is generated by $\{\pi_0, \pi_1, \ldots , \pi_{n-1}\}$ with the same relations \eqref{eq:Hn(0)_relations1}, \eqref{eq:Hn(0)_relations2}, \eqref{eq:Hn(0)_relations3} for $1\le i \le n-1$ and the additional relations
\begin{align}
\pi_0^2 & = -\pi_0; \label{eq:Hn(0)_relations4} \\
\pi_0\pi_i & =  \pi_i\pi_0  \,\,\,\, \mbox{ if } i\ge 2; \label{eq:Hn(0)_relations5}\\
\pi_0\pi_1\pi_0\pi_1 & = \pi_1\pi_0\pi_1\pi_0.\label{eq:Hn(0)_relations6}
\end{align}
By \cite{Norton}, the simple modules of $H_n(0)$ (respectively, $H_n^B(0)$) are all one-dimensional, and they are in bijection with the subsets of the set of simple generators of $S_n$ (respectively, $B_n$). 
For $I\subseteq[n-1]$, let us (slightly abusing notation) write $\mathcal S_I$ for the simple $H_n(0)$-module corresponding to the set $\{s_i:i\in I\}$ of simple generators of $S_n$. Similarly, for $I\subseteq [0,n-1]$, we write $\mathcal S_I^B$ for the simple $H_n^B(0)$-module corresponding to the set $\{s_i:i\in I\}$ of simple generators of $B_n$. The module structures of $\mathcal S_I$ and $\mathcal S_I^B$ are determined by 
\[\pi_j\varepsilon_I = \begin{cases} -\varepsilon_I & \mbox{ if } j\in I \\
                                       0 & \mbox{ if } j\notin I,
                                       \end{cases}\]
where $\varepsilon_I$ is a fixed basis element of $\mathcal S_I$ or $\mathcal S_I^B$.                

Let $[M]$ denote the isomorphism class of a $0$-Hecke module $M$. The \dfn{Grothendieck group} of $H_n(0)$, denoted $\mathcal{G}_0(H_n(0))$, is the span of the isomorphism classes of the finitely-generated $H_n(0)$-modules, modulo the relation $[A]-[B]+[C]=0$ whenever there exists a short exact sequence $0\rightarrow A \rightarrow B \rightarrow C\rightarrow 0$; the Grothendieck group $\mathcal{G}_0(H_n^B(0))$ is defined analogously. The (type-$A$) \dfn{quasisymmetric characteristic} $\ch: \bigoplus_{n\ge 0} \mathcal{G}_0(H_n(0)) \rightarrow \QSym$ \cite{DKLT} is the isomorphism defined by $\ch([\mathcal S_I]) = F_I$. The \dfn{type-$B$ quasisymmetric characteristic} $\ch^B:\mathcal{G}_0(H_n^B(0))\rightarrow \QSym^B$ is defined by $\ch^B([\mathcal S_I^B]) = F_I^B$. For ease of notation, we will generally write $\ch(M)$ in place of $\ch([M])$.

\section{Ascent-Compatibility}\label{sec:ascentcompatible}

Let $(W,S)$ be a Coxeter system. A \dfn{reduced word} for an element $w\in W$ is a minimum-length word over the alphabet $S$ that represents $w$; the number of letters in a reduced word for $w$ is called the \dfn{length} of $w$ and is denoted by $\ell(w)$. We say $s\in S$ is a (left) \dfn{descent} of $w$ if $\ell(sw)<\ell(w)$; otherwise, $s$ is an \dfn{ascent} of $w$. The set of descents of $w$ is denoted $\Des(w)$. 

Given an arbitrary set $X$, we write $\mathbb CX$ for the complex vector space freely generated by $X$.

Fix $X\subseteq W$. For $s\in S$, define a linear operator $ \pi_s: \mathbb C X \rightarrow \mathbb C X$ by
\[ \pi_s(x) = \begin{cases} -x & \mbox{ if } s\in \Des(x) \\
				       0 & \mbox{ if } s\notin \Des(x) \mbox{ and } sx\notin X \\
				       sx & \mbox{ if } s\notin \Des(x) \mbox{ and } sx\in X
\end{cases}\]
for all $x\in X$. For $u,v\in W$ and $s,t\in S$, let us say the quadruple $(u,v,s,t)$ is \dfn{aligned} if the simple generator $s$ is an ascent of $u$, the simple generator $t$ is an ascent of $v$, and $u^{-1}su=v^{-1}tv$. We say $X$ is \dfn{ascent-compatible} if for all $u,v \in X$ and all $s,t\in S$ such that $(u,v,s,t)$ is aligned, we have that $su\in X$ if and only if $tv\in X$. 

The following theorem was proven in type~$A$ in \cite{Searles:0HC}, where it was used to provide a general framework for constructing type-$A$ 0-Hecke modules whose quasisymmetric characteristics are interesting quasisymmetric functions. 

\begin{theorem}\label{thm:ascentcompatible}
Let $(W,S)$ be a Coxeter system. If $X$ is an ascent-compatible subset of $W$, then the linear operators $\pi_s$ define an action of the $0$-Hecke algebra $H_W(0)$ on $\mathbb C X$.
\end{theorem}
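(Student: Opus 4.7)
The plan is to verify the defining relations of $H_W(0)$ for the operators $\pi_s$: the quadratic relations $\pi_s^2 = -\pi_s$ and the braid relations $[\pi_s\vert\pi_t]_{m(s,t)} = [\pi_t\vert\pi_s]_{m(s,t)}$ for all distinct $s,t \in S$ with $m(s,t) < \infty$.

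For the quadratic relation I would apply $\pi_s$ twice to a basis element $x \in X$ and branch on the three cases in the definition. The only nontrivial case is $s \notin \Des(x)$ with $sx \in X$: then $\pi_s(x) = sx$, and since $s$ is a left ascent of $x$ we have $\ell(s \cdot sx) = \ell(x) < \ell(sx)$, so $s \in \Des(sx)$ and $\pi_s(sx) = -sx$, giving $\pi_s^2(x) = -\pi_s(x)$.

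For the braid relation, fix $x \in X$ and distinct $s, t$ with $m := m(s,t) < \infty$. Since $\pi_s$ and $\pi_t$ applied to $x$ produce elements in the right coset $W'x$ where $W' = \langle s, t\rangle$ is dihedral of order $2m$, it suffices to verify the braid relation on the stable subspace $\mathbb{C}(W' x \cap X)$. Let $y$ be the minimum-length element of $W' x$, so every element of $W' x$ is uniquely $vy$ with $v \in W'$ and $\ell(vy) = \ell(v) + \ell(y)$. For $r \in \{s, t\}$, $r \in \Des(vy)$ iff $r$ is a left descent of $v$ in $W'$. Via the bijection $vy \leftrightarrow v$, the action of $\pi_s, \pi_t$ on $\mathbb{C}\{vy : vy \in X\}$ corresponds to the analogous truncated action on $\mathbb{C}X'$ with $X' := \{v \in W' : vy \in X\} \subseteq W'$; moreover, any aligned quadruple $(u_1, v_1, s', t')$ in $W'$ with $u_1, v_1 \in X'$ corresponds to an aligned quadruple $(u_1 y, v_1 y, s', t')$ in $W$ with $u_1 y, v_1 y \in X$, so ascent-compatibility of $X$ passes to $X'$. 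This reduces the problem to the dihedral case.

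In the dihedral case, applied to $u \in X'$, each of $[\pi_s\vert\pi_t]_m(u)$ and $[\pi_t\vert\pi_s]_m(u)$ traces out a chain $u = u_0, u_1, \ldots, u_m$ in $W'$ (with a $\pm$ sign), where each step either stays put (if the operator's letter is a descent) or left-multiplies by $s$ or $t$. The untruncated regular representation on $\mathbb{C}W'$ satisfies the braid relation, so both sides produce there the same $\pm w_0$, where $w_0$ is the longest element of $W'$. The truncated versions produce this same $\pm w_0$ if all multiplication steps land in $X'$, and $0$ otherwise. The remaining claim is that one side gives $0$ iff the other does; I would prove this by identifying, for each $u \in X'$, a list of aligned quadruples in $W'$ that pair the multiplication steps of the two paths and, via ascent-compatibility of $X'$, transfer membership in $X'$ between them. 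For instance, when $m = 3$ and $u = e$ the quadruples $(e, st, s, t)$, $(t, s, s, t)$, $(ts, e, s, t)$ (together with their $s \leftrightarrow t$ reflections) force $s \in X' \iff t \in X' \iff w_0 \in X'$ and (given $s,t \in X'$) $st \in X' \iff ts \in X'$, which is exactly what makes the conditions for the two paths $e \to t \to st \to w_0$ and $e \to s \to ts \to w_0$ equivalent.

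The main obstacle is this final dihedral verification: systematically producing the aligned quadruples that pair the two paths, for every starting element $u$ and every $m \ge 2$. The case analysis is finite but requires careful uniform bookkeeping to handle arbitrary $m$ and all possible starting positions $u$ in the dihedral orbit.
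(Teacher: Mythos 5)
Your reduction to the dihedral case is valid as far as it goes: for $W' = \langle s,t\rangle$ and $y$ the minimal-length representative of $W'x$, the operators $\pi_s,\pi_t$ stabilize $\mathbb{C}(W'x\cap X)$, the bijection $vy\leftrightarrow v$ transports the truncated action to $X' = \{v\in W':vy\in X\}$, and conjugating by $y$ shows that aligned quadruples in $W'$ lift to aligned quadruples in $W$, so ascent-compatibility of $X$ does pass to $X'$. The quadratic relation is also handled correctly and exactly as in the paper.

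The problem is that the step you label the ``main obstacle'' --- showing, uniformly in $m\ge 2$ and in the starting position $u\in W'$, that one alternating chain stays in $X'$ if and only if the other does --- is precisely where the whole content of the braid relation lives, and you have not actually carried it out; you illustrate only $m=3$, $u=e$. As a plan, pairing up \emph{every} multiplication step of the two chains and transferring membership across the dihedral orbit via a sequence of conditional implications is more delicate than you suggest: the implications have hypotheses that must be discharged in a specific order, the chains acquire ``stay-put'' steps and sign flips when $u\neq e$, and the bookkeeping grows with $m$. So as written the proposal has a genuine gap.

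The paper avoids both the coset reduction and the chaining, splitting instead on whether $\{s,t\}\cap\Des(x)$ is empty. When it is nonempty (your $u\ne e$), the braid relation is verified by a short direct computation that uses \emph{no} ascent-compatibility at all: $\pi_s(x)=-x$ gives $[\pi_s\vert\pi_t]_i(x)=-[\pi_t\vert\pi_s]_{i+1}(x)$, and one shows that both $s$ and $t$ are descents of the last genuinely reached element $[s\vert t]_kx$, after which the two sides match up to a consistent sign. When $s,t\notin\Des(x)$ (your $u=e$), the paper argues by contradiction with a single ``first-failure'' aligned quadruple: if one side vanishes and the other does not, pick the smallest $j$ where the vanishing chain first dies, and check that $([s\vert t]_jx,[t\vert s]_{m-1-j}x,b,c)$ is aligned with both entries already known to lie in $X$; ascent-compatibility then forces the surviving chain to die at the mirror step, a contradiction. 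This single quadruple is what you would need to supply to make your dihedral sketch rigorous; it replaces your proposed chaining entirely.
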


\begin{proof}
Fix $x\in X$ and $s\in S$. We first show that $ \pi_s^2(x)=- \pi_s(x)$. If $ \pi_s(x)=-x$, then we have $ \pi_s^2(x)= \pi_s(-x)=- \pi_s(x)$. If $ \pi_s(x)=0$, then $ \pi_s^2(x)=0=- \pi_s(x)$. If $ \pi_s^2(x)=sx$, then $s\not\in\Des(x)$, so $s\in\Des(sx)$. In this case, $ \pi_s^2(x)= \pi_s(sx)=-sx=- \pi_s(x)$. 

Now choose some $t\in S\setminus\{s\}$, and let $m=m(s,t)$. We need to show that ${[{\pi}_s\vert{\pi}_t]_m(x)=[{\pi}_t\vert{\pi}_s]_m(x)}$. We consider two cases. 

\medskip 

\noindent {\bf Case 1.} Suppose either $s\in\Des(x)$ or $t\in\Des(x)$. Without loss of generality, we may assume $s\in\Des(x)$. Then ${\pi}_s(x)=-x$, so $[{\pi}_s\vert{\pi}_t]_i(x)=-[{\pi}_t\vert{\pi}_s]_{i+1}(x)$ for all $i\geq 0$. Let $k$ be the largest integer in $[0,m-1]$ such that $[{\pi}_s\vert{\pi}_t]_k(x)=[s\vert t]_kx$. Let 
\[a=\begin{cases} s & \mbox{if } k\text{ is even} \\ t & \mbox{if } k\text{ is odd}\end{cases}\quad\text{and}\quad \overline a=\begin{cases} t & \mbox{if } k\text{ is even} \\ s & \mbox{if } k\text{ is odd,}\end{cases}\] and observe that $a\in\Des([s\vert t]_kx)$. If we assume that $k\leq m-2$ and $[{\pi}_s\vert{\pi}_t]_{k+1}(x)=0$, then ${[{\pi}_t\vert{\pi}_s]_{k+2}(x)=-[{\pi}_s\vert{\pi}_t]_{k+1}(x)=0}$, so ${[{\pi}_s\vert{\pi}_t]_m(x)=0=[{\pi}_t\vert{\pi}_s]_m(x)}$. Hence, we may assume either that $k=m-1$ or that $k\leq m-2$ and $[{\pi}_s\vert{\pi}_t]_{k+1}(x)=-[s\vert t]_kx$. We claim that $\overline a\in\Des([s\vert t]_kx)$. This is immediate from the definitions of $k$ and ${\pi}_{\overline a}$ if $[{\pi}_s\vert{\pi}_t]_{k+1}(x)=-[s\vert t]_kx$. Now suppose $k=m-1$. Since $s\in\Des(x)$, we can write $x=sy$, where $\ell(x)=\ell(y)+1$. Then $[s\vert t]_{m-1}x=[t\vert s]_my=[s\vert t]_my$, and $\ell([s\vert t]_my)=\ell(y)+m$. The first letter in the word $[s\vert t]_my$ is $\overline a$, so $\overline a\in\Des([s\vert t]_my)=\Des([s\vert t]_{k}x)$.

We have proven that $\{s,t\}=\{a,\overline a\}\subseteq\Des([s\vert t]_kx)$, so $[{\pi}_s\vert{\pi}_t]_m(x)=(-1)^{m-k}[s\vert t]_k(x)$ and $[{\pi}_t\vert{\pi}_s]_m(x)=-[{\pi}_s\vert{\pi}_t]_{m-1}(x)=-(-1)^{m-1-k}[s\vert t]_k(x)$. Thus, $[{\pi}_s\vert {\pi}_t]_m(x)=[{\pi}_t\vert {\pi}_s]_m(x)$. 

\medskip 

\noindent {\bf Case 2.} Suppose $s,t\not\in\Des(x)$. Since $[s\vert t]_m$ is the longest element of the parabolic subgroup of $W$ generated by $\{s,t\}$, it follows from \cite[Lemma~3.2.4]{BjornerBrenti} that $\ell([s\vert t]_mx)=\ell(x)+m$. This implies that $[{\pi}_s\vert{\pi}_t]_m(x)\in\{0,[s\vert t]_mx\}$ and $[{\pi}_t,{\pi}_s]_m(x)\in\{0,[t\vert s]_mx\}=\{0,[t\vert s]_mx\}$. Assume by way of contradiction that $[{\pi}_s,\vert{\pi}_t]_m(x)\neq[{\pi}_t\vert{\pi}_s]_m(x)$. Without loss of generality, we may assume that $[{\pi}_s\vert{\pi}_t]_m(x)=0$ and $[{\pi}_t\vert{\pi}_s]_m(x)=[s\vert t]_mx$. There exists $j\in[0,m-1]$ such that ${[{\pi}_s\vert{\pi}_t]_j(x)\neq 0=[{\pi}_s\vert{\pi}_t]_{j+1}(x)}$. Let 
\[b=\begin{cases} t & \mbox{if } j\text{ is even} \\ s & \mbox{if } j\text{ is odd}\end{cases}\quad\text{and}\quad c=\begin{cases} s & \mbox{if } m-1-j\text{ is even} \\ t & \mbox{if } m-1-j\text{ is odd.}\end{cases}\] Then \[([s\vert t]_jx)^{-1}b([s\vert t]_jx)=x^{-1}[s\vert t]_{2j+1}x=x^{-1}[t\vert s]_{2m-2j-1}x=([t\vert s]_{m-1-j}x)^{-1}c([t\vert s]_{m-1-j}x),\] and we have \[[s\vert t]_jx\in X,\quad [t\vert s]_{m-1-j}x\in X,\quad b\in S\setminus\Des([s\vert t]_jx),\quad \text{and}\quad c\in S\setminus\Des([t\vert s]_{m-1-j}x).\] This shows that the quadruple $([s\vert t]_jx,[t\vert s]_{m-1-j}x,b,c)$ is aligned. Furthermore, we know that ${\pi}_b([s\vert t]_jx)=[{\pi}_s\vert{\pi}_t]_{j+1}(x)=0$, so $b[s\vert t]_jx\not\in X$. Because $X$ is ascent-compatible, we must have ${c[t\vert s]_{m-1-j}x\not\in X}$, so ${\pi}_c([t\vert s]_{m-1-j}x)=0$. This is impossible since ${\pi}_c([t\vert s]_{m-1-j}x)=[{\pi}_t\vert{\pi}_s]_{m-j}(x)$ and $[{\pi}_t\vert{\pi}_s]_{m}(x)\neq 0$. 
\end{proof}

For an easy example of an ascent-compatible set, consider $I\subseteq S$, and define the \dfn{left descent class} associated to $I$ to be the set $D_I^W=\{x\in W:\Des(x)=I\}$. 

\begin{proposition}\label{prop:leftdescent}
Let $(W,S)$ be a Coxeter system. If $I\subseteq S$ and $X\subseteq D_I^W$, then $X$ is ascent-compatible. 
\end{proposition}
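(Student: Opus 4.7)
The plan is to observe that the hypothesis $X\subseteq D_I^W$ forces the ascent-compatibility condition to hold vacuously: whenever $u\in X$ and $s$ is a left ascent of $u$, the element $su$ cannot possibly lie in $X$. Once this is established, applying it to both $u$ and $v$ in an aligned quadruple $(u,v,s,t)$ shows that both $su\notin X$ and $tv\notin X$, so the biconditional $su\in X\iff tv\in X$ is automatic.

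To justify the key observation, I would argue as follows. Let $u\in X\subseteq D_I^W$, so $\Des(u)=I$. If $s\in S$ is a left ascent of $u$, then by definition $\ell(su)=\ell(u)+1$. Now consider whether $s\in\Des(su)$: applying $s$ on the left gives $s(su)=u$, of length $\ell(u)<\ell(su)$, so directly from the definition of descent, $s\in\Des(su)$. But $s\notin\Des(u)=I$ by the ascent hypothesis. Hence $\Des(su)\neq I$, so $su\notin D_I^W$, and in particular $su\notin X$.

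To finish, take $u,v\in X$ and $s,t\in S$ such that $(u,v,s,t)$ is aligned. Alignment in particular says that $s$ is an ascent of $u$ and $t$ is an ascent of $v$, so the observation above yields $su\notin X$ and $tv\notin X$. Therefore the condition ``$su\in X$ if and only if $tv\in X$'' is satisfied, with both sides false, and $X$ is ascent-compatible.

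There is essentially no obstacle here; the argument is a one-line check once one notices that the ascent $s$ of $u$ immediately becomes a descent of $su$, so $su$ lands in a different left descent class than $u$. Notably, the conjugacy condition $u^{-1}su=v^{-1}tv$ from the definition of alignment is never used, and neither is any deeper Coxeter-theoretic input (such as the exchange or strong exchange property); only the definitions of length and left descent are needed.
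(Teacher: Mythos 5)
Your proof is correct and matches the paper's argument exactly: both show that an ascent $s$ of $u$ becomes a descent of $su$, so $\Des(su)\neq I$ and hence $su\notin D_I^W\supseteq X$ (and likewise $tv\notin X$), making the biconditional vacuously true. Your additional remark that the conjugacy condition $u^{-1}su=v^{-1}tv$ is never used is a nice observation that the paper leaves implicit.
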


\begin{proof}
Suppose $u,v\in X$ and $s,t\in S$ are such that $(u,v,s,t)$ is aligned. Then $s\not\in\Des(u)=I$ and $t\not\in\Des(v)=I$. However, we have $s\in\Des(su)$ and $t\in\Des(tv)$, so $su$ and $tv$ are not in $D_I^W$. It follows that $su$ and $tv$ are both not in $X$. 
\end{proof}

The next theorem provides more interesting examples of ascent-compatible sets. First, we need to establish some additional background. As before, let $(W,S)$ be a Coxeter system. For $x,y\in W$, let us write $x\leq_{\LL}y$ if there is a reduced word for $y$ that contains a reduced word for $x$ as a suffix; then $\leq_{\LL}$ is a partial order on $W$ called the \dfn{left weak order}. If $x\leq_{\LL}y$, then the \dfn{interval} between $x$ and $y$ in the left weak order is the set $[x,y]_{\LL}=\{z\in W:x\leq_{L}z\leq_{\LL}y\}$. A set $X\subseteq W$ is \dfn{convex} (in the left weak order) if for all $x,y\in X$ with $x\leq_{\LL}y$, we have $[x,y]_{\LL}\subseteq X$. A \dfn{reflection} in $W$ is an element that is conjugate to a simple generator. Thus, the set of reflections is $\{w^{-1}sw:s\in S, w\in W\}$. A reflection $r$ is called a \dfn{right inversion} of an element $x\in W$ if $\ell(xr)<\ell(x)$. Let $\Inv(x)$ denote the set of right inversions of $x$. For $x,y\in W$, it is well known that $\ell(x)=\lvert\Inv(x)\rvert$ and that $x\leq_{\LL}y$ if and only if $\Inv(x)\subseteq\Inv(y)$. 

\begin{theorem}\label{thm:convex}
Let $(W,S)$ be a Coxeter system, and let $X\subseteq W$. If $X$ is convex in the left weak order and has a unique maximal element under the left weak order, then $X$ is ascent-compatible.   
\end{theorem}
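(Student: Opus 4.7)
The plan is to convert the ascent-compatibility condition into a simple inversion-set comparison with the maximal element of $X$. Let $w_0$ denote the unique maximal element. The first step is to establish that $w_0$ is in fact the maximum of $X$ under $\leq_\LL$, i.e.\ that $x \leq_\LL w_0$ for every $x \in X$. Combined with convexity, this has the following useful consequence: any element $z \in W$ belongs to $X$ as soon as there is some $x \in X$ with $x \leq_\LL z \leq_\LL w_0$, since then $z \in [x, w_0]_\LL \subseteq X$.

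The key Coxeter-theoretic input I would use is the identity
\[
\Inv(su) = \Inv(u) \sqcup \{u^{-1} s u\}
\]
valid whenever $s \in S$ is a left ascent of $u \in W$. One verifies this quickly: the inclusion $\Inv(u) \subseteq \Inv(su)$ follows from a standard length estimate; the reflection $u^{-1} s u$ lies in $\Inv(su) \setminus \Inv(u)$ because $(su)(u^{-1} s u) = u$ has length $\ell(su) - 1$ while $u \cdot (u^{-1} s u) = su$ has length $\ell(u) + 1$; and the cardinalities match since $\ell(su) = \ell(u) + 1$.

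Now fix an aligned quadruple $(u, v, s, t)$ with $u, v \in X$, and set $r = u^{-1} s u = v^{-1} t v$. By the identity, $\Inv(su) = \Inv(u) \cup \{r\}$ and $\Inv(tv) = \Inv(v) \cup \{r\}$. Since $u, v \leq_\LL w_0$ gives $\Inv(u), \Inv(v) \subseteq \Inv(w_0)$, we obtain $su \leq_\LL w_0$ if and only if $r \in \Inv(w_0)$, and likewise $tv \leq_\LL w_0$ if and only if $r \in \Inv(w_0)$. Combined with the membership characterization from the first paragraph, this yields
\[
su \in X \iff r \in \Inv(w_0) \iff tv \in X,
\]
which is exactly the ascent-compatibility condition.

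I expect the main obstacle to be the very first step: in an arbitrary poset a unique maximal element need not be a maximum, so pinning down $w_0$ as the maximum of $X$ requires leveraging convexity together with structural properties of the left weak order on $W$ (this is immediate when $X$ is finite, and should be accessible in general, but requires care). Once that is in hand, the remainder of the argument is a routine inversion-set bookkeeping.
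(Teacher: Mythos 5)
Your proposal is correct and matches the paper's argument in substance: both compare inversion sets with those of the maximal element $w_0$ via $\Inv(su)=\Inv(u)\sqcup\{r\}$ and then invoke convexity, with the paper phrasing this as a proof by contradiction while you state the symmetric equivalence $su\in X \iff r\in\Inv(w_0) \iff tv\in X$ directly. Your caveat about ``unique maximal element'' versus ``maximum'' is apt --- the paper silently writes $su\leq_{\LL}w_0$ and $v\leq_{\LL}w_0$ for arbitrary elements of $X$, which is automatic for finite $X$ (the case in all of the paper's applications, cf.\ \cref{rem:inverse_descent}) but would need a separate justification for an infinite convex $X$.
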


\begin{proof}
Let $z$ be the unique maximal element of $X$. Assume by way of contradiction that $X$ is not ascent-compatible. Then there exist $u,v\in X$ and $s,t\in S$ such that $(u,v,s,t)$ is aligned, $su\in X$, and $tv\not\in X$. Let $r=u^{-1}su=v^{-1}tv$. Since $u\leq_{\LL} su$ and $v\leq_{\LL}tv$, the reflection $r$ is both a right inversion of $su$ and a right inversion of $tv$. Since $su\in X$, we have $su\leq_{\LL}z$, so $\Inv(su)\subseteq\Inv(z)$. This implies that $r\in\Inv(z)$. On the other hand, since $\lvert\Inv(tv)\rvert=\ell(tv)=\ell(v)+1=\lvert\Inv(v)\rvert+1$ and $r\not\in\Inv(v)$, we must have $\Inv(tv)=\Inv(v)\cup\{r\}$. Because $v\in X$, we have $v\leq_{\LL}z$, so $\Inv(v)\subseteq\Inv(z)$. This implies that $\Inv(tv)=\Inv(v)\cup\{r\}\subseteq\Inv(z)$, so $v\leq_{\LL}tv\leq_{\LL}z$. However, $X$ is convex and contains $v$ and $z$, so this forces $tv$ to belong to $X$, which is a contradiction.      
\end{proof}

\begin{remark}\label{rem:inverse_descent}
Inverses of left descent classes (which are also called \emph{right descent classes}) provide notable examples of convex subsets of the left weak order of a Coxeter group $W$. More precisely, if $(W,S)$ is a Coxeter system and $I\subseteq S$ is such that the left descent class $D_I^W$ is nonempty, then the right descent class $(D_I^W)^{-1}=\{x\in W:\Des(x^{-1})=I\}$ is a finite convex subset of the left weak order that has a unique maximal element. Hence, if $D_I^W$ is nonempty, then $(D_I^W)^{-1}$ is ascent-compatible by \cref{thm:convex}. 
\end{remark}

It was shown in \cite{JKLO} that intervals in the left weak order on the symmetric group $S_n$ give rise to important families of type-$A$ $0$-Hecke modules, and in \cite{Searles:0HC}, it was shown that these modules can be obtained via the ascent-compatibility framework.  
One can view \cref{thm:convex} as a generalization of this to arbitrary Coxeter groups; our theorem is also more general even in type~$A$ since it does not require the set $X$ to have a unique minimal element. 

Let us now specialize our attention to the hyperoctahedral group $B_n$ and its $0$-Hecke algebra $H_n^B(0)$. 
It will be convenient to identify simple generators of $B_n$ with their indices; for instance, we will view the descent set of an element of $B_n$ as a subset of $[0,n-1]$. 
If $X$ is an ascent-compatible subset of $S_n$, then \Cref{thm:ascentcompatible} endows $\mathbb C X$ with the structure of an $H_n(0)$-module. It was proven in \cite{Searles:0HC} that the quasisymmetric characteristic of this module is given by \[\ch(\mathbb C X)=\sum_{x\in X}F_{\Des(x)}.\] The same argument yields the following analogue of this result in type~$B$. 

\begin{theorem}\label{thm:ascent_compatible_B}
If $X$ is an ascent-compatible subset of $B_n$, then \[\ch^B(\mathbb C X)=\sum_{x\in X}F^B_{\Des(x)}.\]
\end{theorem}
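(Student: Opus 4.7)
The plan is to mimic the type-$A$ argument of \cite{Searles:0HC}: construct a filtration of $\mathbb{C}X$ by $H_n^B(0)$-submodules whose successive quotients are one-dimensional and isomorphic to the simple modules $\mathcal{S}^B_{\Des(x)}$, and then conclude by the additivity of $\ch^B$ on short exact sequences. Ascent-compatibility is what guarantees (via \cref{thm:ascentcompatible}) that the operators $\pi_s$ actually define an $H_n^B(0)$-action on $\mathbb{C}X$; beyond invoking this, ascent-compatibility itself plays no further role in the rest of the argument.

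First I would totally order the elements of $X$ as $x_1, x_2, \ldots, x_k$ in weakly decreasing order of length in $B_n$ (breaking ties arbitrarily). For each $0\le i\le k$, set $M_i$ to be the subspace of $\mathbb{C}X$ spanned by $x_1, \ldots, x_i$, so that $0 = M_0 \subset M_1 \subset \cdots \subset M_k = \mathbb{C}X$. The key step is to verify that each $M_i$ is an $H_n^B(0)$-submodule. Fix $i$ and $s\in S$. If $s\in\Des(x_i)$, then $\pi_s(x_i)=-x_i\in M_i$. If $s\notin\Des(x_i)$ and $sx_i\notin X$, then $\pi_s(x_i)=0\in M_{i-1}\subseteq M_i$. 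Finally, if $s\notin\Des(x_i)$ and $sx_i\in X$, then $\pi_s(x_i)=sx_i$, and since $\ell(sx_i)=\ell(x_i)+1$, the element $sx_i$ occurs strictly earlier in our ordering than $x_i$, so $sx_i\in M_{i-1}\subseteq M_i$. Thus $M_i$ is stable under every $\pi_s$.

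Next, each quotient $M_i/M_{i-1}$ is one-dimensional, spanned by the image $\overline{x_i}$ of $x_i$. From the case analysis above, $\pi_s\cdot\overline{x_i}=-\overline{x_i}$ when $s\in\Des(x_i)$ and $\pi_s\cdot\overline{x_i}=0$ when $s\notin\Des(x_i)$. Comparing with the definition of the simple module structure, this gives an $H_n^B(0)$-module isomorphism $M_i/M_{i-1}\cong \mathcal{S}^B_{\Des(x_i)}$.

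Finally, the short exact sequences $0\to M_{i-1}\to M_i \to M_i/M_{i-1}\to 0$ yield the identity $[\mathbb{C}X]=\sum_{i=1}^k [\mathcal{S}^B_{\Des(x_i)}]$ in $\mathcal{G}_0(H_n^B(0))$. Applying $\ch^B$ and using $\ch^B([\mathcal{S}^B_I])=F^B_I$ gives
\[
\ch^B(\mathbb{C}X)=\sum_{i=1}^k F^B_{\Des(x_i)} = \sum_{x\in X}F^B_{\Des(x)},
\]
as desired. The only potentially delicate point is confirming that $\ell(sx)=\ell(x)+1$ whenever $s\notin\Des(x)$ (a standard property of Coxeter groups), which is precisely what makes the length-decreasing ordering compatible with the submodule structure; everything else is bookkeeping.
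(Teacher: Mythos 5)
Your proof is correct and follows essentially the same strategy as the paper: filter $\mathbb{C}X$ by a nested sequence of $H_n^B(0)$-submodules with one-dimensional simple quotients and apply additivity of $\ch^B$. The paper orders $X$ via a linear extension of the left weak order $\leq_{\LL}$, whereas you order by decreasing Coxeter length; since $x \leq_{\LL} y$ implies $\ell(x) \leq \ell(y)$, your ordering is a particular instance of such a linear extension, so the two arguments coincide.
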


\begin{proof}
Let $m=|X|$, and let $x_m,\ldots,x_1$ be a linear extension of the poset $(X,\leq_{\LL})$. That is, $x_m,\ldots,x_1$ is an ordering of the elements of $X$ such that $j<i$ whenever $x_i\leq_{\LL}x_j$. For $0\leq k\leq m$, let ${\bf N}_k=\text{span}\{x_1,\ldots,x_k\}$. Then \[0={\bf N}_0\subseteq {\bf N}_1\subseteq\cdots\subseteq{\bf N}_m=\mathbb C X\] is a filtration of $\mathbb C X$. For $1\leq k\leq m$, the quotient ${\bf N}_k/{\bf N}_{k-1}$ is a $1$-dimensional $H_n^B(0)$-module spanned by the element $\overline x_k=x_k+{\bf N}_{k-1}$; the module structure is given by \[\pi_j(\overline x_k)=\begin{cases} -\overline x_k & \mbox{ if } j\in \Des(x_k) \\
0 & \mbox{ if } j\not\in\Des(x_k).
\end{cases}\] Thus, ${\bf N}_k/{\bf N}_{k-1}$ is isomorphic to the simple module $\mathcal S_{\Des(x_k)}^B$. It follows that \[\ch^B(\mathbb C X)=\sum_{k=1}^m\ch^B({\bf N}_k/{\bf N}_{k-1})=\sum_{k=1}^m\ch^B(\mathcal S_{\Des(x_k)}^B)=\sum_{k=1}^m F^B_{\Des(x_k)}. \qedhere \]
\end{proof}

In \cref{sec:CliffordApplications}, we will obtain a result similar to \cref{thm:ascent_compatible_B} that will allow us to compute the type-$B$ quasisymmetric characteristic of the module obtained from an ascent-compatible set $X$ by first inducing a type-$B$ $0$-Hecke--Clifford module from $\mathbb CX$ and then restricting back to an $H_n^B(0)$-module. 

We now provide specific applications of \Cref{thm:ascent_compatible_B}. Motivated by a type-$B$ analogue of Schur positivity, Mayorova and Vassilieva \cite{MV} studied several interesting subsets of $B_n$ and functions associated to them. For $X\subseteq B_n$, let \[\mathcal Q(X)=\sum_{x\in X}F_{\Des(x^{-1})}^B.\] For a specific choice of $X$, if we can show that the set $X^{-1} = \{x^{-1}:x\in X\}$ is ascent-compatible, then it will follow from \Cref{thm:ascentcompatible,thm:ascent_compatible_B} that $\mathcal Q(X)$ can be realized as the quasisymmetric characteristic of an $H_n^B(0)$-module. 

We view $B_n$ as the group of permutations $\sigma$ of the set $\pm[n]:=\{\pm1,\ldots,\pm n\}$ with the property that ${\sigma(-i)=-\sigma(i)}$ for all $i$. One example of a set $X\subseteq B_n$ whose corresponding function $\mathcal{Q}(X)$ was studied in \cite{MV} is the left descent class $D_I^{B} = \{\sigma\in B_n: \Des(\sigma)=I\}$ associated to a set $I\subseteq [0,n-1]$ (where we identify the simple generator $s_i$ with the index $i$). A second example of such a set $X$ is the set $L^B=\bigcup_{i=1}^nL_i^B$, where $L_i^B$ is the set of all $\sigma\in B_n$ such that \[\sigma^{-1}(1) > \cdots > \sigma^{-1}(i) < \cdots < \sigma^{-1}(n);\] elements of $L^B$ are called \dfn{left-unimodal permutations}.

\begin{theorem}
The sets $(D_I^{B})^{-1}$ and $(L_i^B)^{-1}$ are ascent-compatible. Therefore, both $\mathbb C(D_I^{B})^{-1}$ and $\mathbb C(L_i^B)^{-1}$ are $H_n^B(0)$-modules, and we have 
\[\mathrm{ch}^B(\mathbb C(D_I^{B})^{-1}) = \mathcal{Q}(D_I^{B})  \,\,\,\, \mbox{ and } \,\,\,\, \mathrm{ch}^B(\mathbb C(L_i^B)^{-1})=\mathcal{Q}(L_i^B);\]
the latter of these identities implies that $\mathrm{ch}^B(\bigoplus_{i=1}^n\mathbb C(L_i^B)^{-1})=\mathcal{Q}(L^B)$.
\end{theorem}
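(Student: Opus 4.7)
The plan is to apply \cref{thm:convex}, which reduces ascent-compatibility to convexity in the left weak order plus a unique maximum; the resulting quasisymmetric characteristics will then be computed by \cref{thm:ascent_compatible_B} via the reindexing
\[\ch^B(\mathbb{C}X^{-1})=\sum_{x\in X^{-1}}F^B_{\Des(x)}=\sum_{y\in X}F^B_{\Des(y^{-1})}=\mathcal{Q}(X).\]
The direct-sum statement then follows from additivity of $\ch^B$ together with the decomposition $L^B=\bigsqcup_{i=1}^n L_i^B$, which is disjoint because the distinct values $\sigma^{-1}(1),\ldots,\sigma^{-1}(n)$ pin down a unique position of the minimum.

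For $(D_I^{B})^{-1}$: this is a right descent class, so \cref{rem:inverse_descent} directly supplies both convexity and a unique maximum.

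For $(L_i^B)^{-1}$: the unimodality $\tau(1)>\cdots>\tau(i)<\cdots<\tau(n)$ is equivalent to the simple-reflection conditions $s_1,\ldots,s_{i-1}\in\Inv(\tau)$ and $s_i,\ldots,s_{n-1}\notin\Inv(\tau)$, with $s_0$ unconstrained. Convexity then follows immediately from the inversion-set characterization $x\leq_{\LL}y\iff\Inv(x)\subseteq\Inv(y)$: if $u\leq_{\LL}w\leq_{\LL}v$ with $u,v\in(L_i^B)^{-1}$, the chain $\Inv(u)\subseteq\Inv(w)\subseteq\Inv(v)$ forces $w$ to satisfy the same simple-reflection conditions. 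To identify the unique maximum, let $J=\{s_i,\ldots,s_{n-1}\}$, let $w_J$ be the longest element of the parabolic subgroup $\langle J\rangle$, and set $z_i:=w_0 w_J$, where $w_0$ is the longest element of $B_n$. The one-line notation of $z_i$ is $(-1,-2,\ldots,-(i-1),-n,-(n-1),\ldots,-i)$, which is manifestly in $(L_i^B)^{-1}$. For any $\tau\in(L_i^B)^{-1}$, the increasing tail $\tau(i)<\cdots<\tau(n)$ forces every transposition of two positions in $\{i,\ldots,n\}$ to lie outside $\Inv(\tau)$; since these transpositions are precisely the elements of $\Inv(w_J)$, we obtain $\Inv(\tau)\cap\Inv(w_J)=\emptyset$. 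Combining this with the standard identity $\Inv(w_0 w)=T\setminus\Inv(w)$ (where $T$ is the set of reflections of $B_n$) applied to $w=w_J$ yields $\Inv(\tau)\subseteq T\setminus\Inv(w_J)=\Inv(z_i)$, whence $\tau\leq_{\LL}z_i$. The main obstacle is identifying the unique maximum of $(L_i^B)^{-1}$; the description $z_i=w_0 w_J$, together with the complementation $\Inv(w_0 w)=T\setminus\Inv(w)$, provides a clean route.
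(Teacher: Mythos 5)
Your proof is correct and takes essentially the same route as the paper: reduce to \cref{thm:convex} by exhibiting $(L_i^B)^{-1}$ as a convex set with a unique maximum. Where you differ is in presentation. The paper identifies $(L_i^B)^{-1}$ directly as an explicit interval $[\sigma,\tau]_\LL$ by writing down both endpoints in one-line notation and asserting the equality is ``straightforward to check''; you instead argue convexity abstractly from the inversion-set characterization of $\leq_{\LL}$ applied to the simple-reflection description of $(L_i^B)^{-1}$, and you pin down the top element via the formula $z_i = w_0 w_J$ together with the complementation identity $\Inv(w_0w)=T\setminus\Inv(w)$, without needing to exhibit the bottom element. Your $z_i$ coincides with the element the paper calls $\sigma$ (whose one-line notation is $(-1,\ldots,-(i-1),-n,\ldots,-i)$), so you and the paper agree on the maximum; the paper's interval is written as $[\sigma,\tau]_\LL$, with $\tau$ the bottom and $\sigma$ the top, which is a slightly unusual ordering of the labels but does not affect the argument. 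Your route has the modest advantage of being intrinsic (it derives $\Inv(z_i)\supseteq\Inv(\tau)$ by pure weak-order/parabolic reasoning rather than by computing inversion sets from one-line notations), while the paper's is shorter on the page because it outsources the verification. The $(D_I^B)^{-1}$ case, the characteristic computation via \cref{thm:ascent_compatible_B}, and the disjointness of the $L_i^B$ are all handled the same way in both treatments.
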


\begin{proof}
We know from \cref{rem:inverse_descent} that $(D_I^B)^{-1}$ is ascent-compatible. 

Fix $1\leq i\leq n$. Let $\sigma$ be the element of $B_n$ such that $\sigma(j)=-j$ for $1\leq j\leq i-1$ and $\sigma(k)=k-i-n$ for $i\leq k\leq n$. Let $\tau$ be the element of $B_n$ such that $\tau(j)=i-j$ for $1\leq j\leq i-1$ and $\tau(k)=k$ for $i\leq k\leq n$. It is straightforward to check that $(L_i^B)^{-1}$ is equal to the interval $[\sigma,\tau]_{\LL}$ in the left weak order. According to \cref{thm:convex}, $(L_i^B)^{-1}$ is ascent-compatible. 
\end{proof}

We note that $D_I^B$ and $L_i^B$ themselves are subsets of left descent classes, so they are ascent-compatible by \cref{prop:leftdescent}. Therefore, the functions $\mathcal{Q}(D_I^{B})^{-1}$ and $\mathcal{Q}(L_i^B)^{-1}$ are also quasisymmetric characteristics of $H_n^B(0)$-modules by \Cref{thm:ascentcompatible,thm:ascent_compatible_B}.

Another subset of $B_n$ considered in \cite{MV} is the \dfn{type-$B$ Knuth class} associated to a standard domino tableau $T$. This is the set $C_T^B$ of all $\sigma\in B_n$ that insert to $T$ under a certain type-$B$ analogue of the Robinson--Schensted correspondence (see \Cref{sec:domino} for the definition of a standard domino tableau). The set $(C_T^B)^{-1}$ is not in general ascent-compatible. However, $C_T^B$ itself is a subset of a left-descent class, so it is ascent-compatible, and the functions $\mathcal{Q}((C_T^B)^{-1})$ are quasisymmetric characteristics of $H_n^B(0)$-modules. 

The final example considered in \cite{MV} concerns the \emph{signed arc permutations} in $B_n$. The set of signed arc permutations is ascent-compatible (although the set of their inverses is not); this ascent-compatibility is noteworthy because the set of signed arc permutations in $B_n$ is \emph{not} convex in the left weak order, so its ascent-compatibility does not follow from \cref{thm:convex}.

Let $-[n]=\{-1,\ldots,-n\}$. It will be convenient to represent negative numbers using overlines; for example, we will write $\widebar 3$ instead of $-3$. The \dfn{one-line notation} of a signed permutation $\sigma\in B_n$ is the word $\sigma(1)\cdots\sigma(n)$. Suppose $x_0$ and $x_1$ are finite words that use disjoint sets of letters. A \dfn{shuffle} of $x_0$ and $x_1$ is a word $w$ such that for each $i\in\{0,1\}$, deleting the letters in $w$ that appear in $x_{1-i}$ yields the word $x_i$. Let $x_0\shuffle x_1$ denote the set of all shuffles of $x_0$ and $x_1$. For example, the elements of $24\shuffle\widebar 3\widebar 1$ are \[24\widebar 3\widebar 1,\quad 2\widebar 34\widebar 1,\quad \widebar 324\widebar 1,\quad 2\widebar3\widebar 14,\quad \widebar 32\widebar 14,\quad \widebar 3\widebar 124.\] 

A \dfn{signed arc permutation} is an element of $B_n$ whose one-line notation is a shuffle of a word $a_1\cdots a_p$ over the alphabet $[n]$ and a word $b_1\cdots b_{n-p}$ over the alphabet $-[n]$ satisfying the following conditions: 
\begin{itemize}
\item $a_{i+1}\equiv a_i+1\pmod n$ for all $1\leq i\leq p-1$;
\item $b_{j+1}\equiv b_j+1\pmod n$ for all $1\leq j\leq  n-p-1$; 
\item $a_1\equiv -b_1+1\pmod n$ if $p$ and $n-p$ are both nonzero. 
\end{itemize}
Let $\mathcal A_n^B$ be the set of signed arc permutations in $B_n$. For example, \[\mathcal A_3^B=\{123,231,312,\widebar 3\widebar 2\widebar 1,\widebar 2\widebar 1\widebar 3,\widebar 1\widebar 3\widebar 2\}\cup(12\shuffle\widebar 3)\cup(31\shuffle \widebar 2)\cup(23\shuffle \widebar 1)\cup(1\shuffle \widebar 3\widebar 2)\cup(2\shuffle \widebar 1\widebar 3)\cup(3\shuffle\widebar 2\widebar 1).\]

\begin{theorem}
The set $\mathcal A_n^B$ is ascent-compatible. Therefore, $\mathbb C\mathcal A_n^B$ is an $H_n^B(0)$-module, and \[\ch^B(\mathbb C\mathcal A_n^B)=\mathcal Q((\mathcal A_n^B)^{-1}).\] 
\end{theorem}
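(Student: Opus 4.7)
The plan is to verify the ascent-compatibility of $\mathcal A_n^B$ directly. Once that is established, the $H_n^B(0)$-module structure on $\mathbb C\mathcal A_n^B$ and the characteristic formula follow respectively from \cref{thm:ascentcompatible} and \cref{thm:ascent_compatible_B}, using that $\mathcal Q((\mathcal A_n^B)^{-1})=\sum_{\sigma\in\mathcal A_n^B}F^B_{\Des(\sigma)}$ by definition of $\mathcal Q$. Because $\mathcal A_n^B$ is not convex in the left weak order (as noted in the paragraph preceding the theorem), \cref{thm:convex} is not available, so the verification must be done by hand.

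First, I would record how left multiplication by a simple generator acts on one-line notations: for $i\geq 1$, the generator $s_i$ simultaneously swaps the letters $i\leftrightarrow i+1$ and $-i\leftrightarrow-(i+1)$ wherever they occur in the one-line notation of $\sigma$, while $s_0$ replaces the unique letter of $\{1,-1\}$ appearing in the one-line notation by its negative. Combining this with the three conditions defining $\mathcal A_n^B$, I would derive a local criterion for when $su\in\mathcal A_n^B$ given $u\in\mathcal A_n^B$ with $s\notin\Des(u)$: the answer depends only on which arc (positive or negative) contains the affected values, on their positions within their arcs, and on the cyclic starting-compatibility condition.

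Next, for an aligned quadruple $(u,v,s,t)$ with $u,v\in\mathcal A_n^B$, the reflection $r=u^{-1}su=v^{-1}tv$, viewed as a right action on positions, swaps a specific pair of positions (or sign-flips a single position) in the same way for both $u$ and $v$. Combined with the ascent conditions $s\notin\Des(u)$ and $t\notin\Des(v)$, this forces the local configuration of values at the affected positions in $u$ to match (up to the relabeling induced by $s\mapsto t$) the configuration at the corresponding positions of $v$. Since the criterion from the previous step depends only on this local data and on the unchanged portion of the shuffle, we conclude $su\in\mathcal A_n^B$ if and only if $tv\in\mathcal A_n^B$.

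The main obstacle is a delicate case analysis, most notably when $s\in\{s_0,s_1\}$. These generators can transport a letter across the positive/negative arc boundary, possibly changing the arc lengths and activating the starting-letter compatibility $a_1\equiv -b_1+1\pmod n$. I would need to verify that any breakage or preservation of this global cyclic condition in $su$ is mirrored by the same behavior in $tv$. The saving grace is that the alignment condition rigidly constrains the pair of positions and values involved, so outside a bounded window the shuffle structures of $u$ and $v$ play no role, and the full check reduces to finitely many local configurations.
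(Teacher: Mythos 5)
Your plan correctly identifies that the proof must be done by a direct verification (since $\mathcal A_n^B$ is not convex), and you correctly observe that the reflection $r=u^{-1}su=v^{-1}tv$ acts on the \emph{same} positions in $u$ and $v$, which is indeed the central lever. However, the step where you conclude is not sound as stated, and the case analysis you defer is where the entire content of the proof lives.

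The problematic claim is that the alignment and ascent conditions ``force the local configuration of values at the affected positions in $u$ to match (up to the relabeling induced by $s\mapsto t$) the configuration at the corresponding positions of $v$,'' and that the criterion for $su\in\mathcal A_n^B$ ``depends only on this local data and on the unchanged portion of the shuffle.'' These two statements are in tension: the unchanged portions of $u$ and $v$ can be entirely different arc permutations, so a criterion depending on that unchanged portion could a priori return different answers for $su$ and $tv$. Moreover, the local configurations need not match even up to relabeling: the equality $\{u^{-1}(i),u^{-1}(i+1),-u^{-1}(i),-u^{-1}(i+1)\}=\{v^{-1}(j),v^{-1}(j+1),\ldots\}$ combined with both ascent conditions leaves a sign-reversal ambiguity, so the patterns of signed entries around the affected positions in $u$ and $v$ can be mirror images rather than translates. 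What actually makes the argument go through (and what the paper exploits) is a different mechanism: one identifies \emph{position-based} intrinsic invariants of arc permutations --- for instance, that $\{|\sigma(j)|:1\le j\le q\}$ is a cyclic interval modulo $n$ for every prefix length $q$ --- and shows that, depending on the type of the reflection $r$ (a sign-flip $(k\,\,\widebar k)$ with $k\in\{1,n\}$, a sign-flip with $2\le k\le n-1$, a position-swap away from the boundary, or a position-swap at the boundary), either this invariant forces both $su\notin\mathcal A_n^B$ and $tv\notin\mathcal A_n^B$, or one shows directly that both $su$ and $tv$ must lie in $\mathcal A_n^B$. Because the invariant is phrased in terms of positions, which agree for $u$ and $v$ by alignment, the two verifications are genuinely parallel; there is no need (and no way) to match the values themselves. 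Until you isolate such a position-based invariant and carry out the case analysis by reflection type, the proof has a real gap.
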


\begin{proof}
The proof is easy if $n\leq 2$, so assume $n\geq 3$. Let $u$ and $v$ be signed arc permutations in $B_n$, and suppose $s,t\in S$ are such that $(u,v,s,t)$ is aligned. We must show that $su\in\mathcal A_n^B$ if and only if $tv\in\mathcal A_n^B$. 

None of the simple generators of $B_n$ other than $s_0$ are conjugate to $s_0$. Since $u^{-1}su=v^{-1}tv$, we have $s=s_0$ if and only if $t=s_0$. If $s=t=s_0$, then the reflection $u^{-1}su=v^{-1}tv$ is a transposition of the form $(k\,\,\widebar k)$ for some $k\in[n]$. On the other hand, if $s$ and $t$ are not $s_0$, then the reflection $u^{-1}su=v^{-1}tv$ can be written as $(m_1\,\,m_2)(\widebar m_1\,\,\widebar m_2)$ for some $m_1,m_2\in\pm[n]$. We now consider four cases. 

\medskip 
\noindent {\bf Case 1.} Suppose that $s=s_0$ and that the reflection $u^{-1}su=v^{-1}tv$ is either $(1\,\,\widebar 1)$ or $(n\,\,\widebar n)$. We consider only the case when this reflection is $(1\,\,\widebar 1)$; the other case is similar. Because $s_0$ is an ascent of $u$, we have $u(1)=\widebar 1$ and $(su)(1)=1$. There exists $q\in[n]$ such that $u\in(23\cdots(q-1))\shuffle(\widebar 1\widebar n\cdots\widebar q)$ and $su\in(12\cdots(q-1))\shuffle(\widebar n\cdots\widebar q)$. It follows that $su=s_0u\in\mathcal A_n^B$. A similar argument shows that $tv=s_0v\in\mathcal A_n^B$ as well. 

\medskip 
\noindent {\bf Case 2.} Suppose that $s=s_0$ and that $u^{-1}su=v^{-1}tv=(k\,\,\widebar k)$ for some $2\leq k\leq n-1$. We will show that $su,tv\not\in\mathcal A_n^B$. Let us prove that $su\not\in\mathcal A_n^B$; the same argument can be used to prove that $tv\not\in\mathcal A_n^B$. Let us write $u$ as a shuffle of a word $a_1\cdots a_p$ over $[n]$ and a word $b_1\cdots b_{n-p}$ over $-[n]$ as in the definition of signed arc permutations. Because $s_0$ is an ascent of $u$, we have $b_j=\widebar 1$ for some $1\leq j\leq n-p$. Suppose by way of contradiction that $su\in\mathcal A_n^B$. The one-line notation of $su$ is obtained from that of $u$ by changing $\widebar 1$ into $1$. We claim that negative entries cannot appear in the one-line notation of $u$ (equivalently, $su$) on both sides of position $k$; indeed, if there were negative entries on both sides of position $k$, then deleting the positive entries from the one-line notation of $su$ would result in a word over $-[n]$ in which $\widebar 2$ and $\widebar n$ appear consecutively, contradicting the definition of $\mathcal A_n^B$. Similarly, positive entries cannot appear in the one-line notation of $u$ (equivalently, $su$) on both sides of position $k$. It follows that $j$ is $1$ or $n-p$. We will assume that $j=1$; the argument is similar if $j=n-p$. Thus, the one-line notation of $u$ is $a_1\cdots a_p\widebar 1b_2\cdots b_{n-p}$, while the one-line notation of $su$ is $a_1\cdots a_p1b_2\cdots b_{n-p}$. The defining conditions of signed arc permutations then imply that $a_p=n$ and $b_2=\widebar n$, which is impossible.  

\medskip 
\noindent {\bf Case 3.} Suppose that $s$ and $t$ are not $s_0$ and that $u^{-1}su=v^{-1}tv=(m_1\,\,m_2)(\widebar m_1\,\,\widebar m_2)$, where the set $\{m_1,m_2,\widebar m_1,\widebar m_2\}$ is not equal to $\{n-1,n,\widebar{n-1},\widebar n\}$. Without loss of generality, we may assume that $m_1$ is the smallest positive element of $\{m_1,m_2,\widebar m_1,\widebar m_2\}$. Then $1\leq m_1\leq n-2$. In this case, we will show that $su$ and $tv$ are not in $\mathcal A_n^B$. Let us prove that $su\not\in\mathcal A_n^B$; the same argument can be used to prove that $tv\not\in\mathcal A_n^B$. Let $s=s_i$. Let us say a $q$-element set $J\subseteq\mathbb Z$ is a \dfn{cyclic interval} modulo $n$ if there is a set of $q$ consecutive integers that are congruent modulo $n$ to the elements of $J$. Observe that if $\sigma\in\mathcal A_n^B$, then for each $q\in[n]$, the set $\{|\sigma(j)|:1\leq j\leq q\}$ is a cyclic interval modulo $n$. Now, \[\{|(su)(j)|:1\leq j\leq m_1\}=(\{|u(j)|:1\leq j\leq m_1\}\setminus\{i\})\cup\{i+1\}.\] Since $m_1\leq n-2$, this implies that the sets $\{|u(j)|:1\leq j\leq m_1\}$ and $\{|(su)(j)|:1\leq j\leq m_1\}$ cannot both be cyclic intervals modulo $n$. Since $u\in\mathcal A_n^B$, we must have $su\not\in\mathcal A_n^B$. 

\medskip 
\noindent {\bf Case 4.} Suppose that $s$ and $t$ are not $s_0$ and that $u^{-1}su=v^{-1}tv=(m_1\,\,m_2)(\widebar m_1\,\,\widebar m_2)$, where $\{m_1,m_2,\widebar m_1,\widebar m_2\}=\{n-1,n,\widebar{n-1},\widebar n\}$. We will show that $su$ and $tv$ are not in $\mathcal A_n^B$. Let us prove that $su\not\in\mathcal A_n^B$; the same argument can be used to prove that $tv\not\in\mathcal A_n^B$. Let us write $u$ as a shuffle of a word $a_1\cdots a_p$ over $[n]$ and a word $b_1\cdots b_{n-p}$ over $-[n]$ as in the definition of signed arc permutations. Let $s=s_i$. Because $s_i$ is an ascent of $u$, the pair $(u(n-1),u(n))$ must be $(i,i+1)$, $(\widebar{i+1},\widebar i)$, $(\widebar i,i+1)$, or $(i+1,\widebar i)$. Suppose first that this pair is $(\widebar i,i+1)$ or $(i+1,\widebar i)$. Then $a_p=i+1$ and $b_{n-p}=\widebar i$. If $p\geq 2$, then $a_{p-1}=a_p-1=i$; if $p=1$, then $b_{n-2}=b_{n-1}-1=\widebar{i+1}$. In either case, we obtain a contradiction because $i$ and $\widebar{i+1}$ do not appear in the one-line notation of $u$. Thus, we may assume that the pair $(u(n-1),u(n))$ is either $(i,i+1)$ or $(\widebar{i+1},\widebar i)$. If $(u(n-1),u(n))=(i,i+1)$, then the last two entries in the one-line notation of $su$ are $i+1,i$, so it follows from the first bulleted item in the definition of a signed arc permutation that $su\not\in\mathcal A_n^B$. Similarly, if $(u(n-1),u(n))=(\widebar{i+1},\widebar i)$, then the last two entries in the one-line notation of $su$ are $\widebar i,\widebar{i+1}$, so it follows from the second bulleted item that $su\not\in\mathcal A_n^B$.  
\end{proof}

\section{Domino Tableaux and Type-$B$ Peak Functions}\label{sec:domino}

A central motivation for studying the families of functions considered in \Cref{sec:ascentcompatible} is the fact that they expand positively into type-$B$ analogues of Schur functions known as \emph{domino functions}, which have formulas in terms of domino tableaux \cite{MV}. In this section, we define an action of the type-$B$ $0$-Hecke algebra on standard domino tableaux, thereby obtaining a representation-theoretic interpretation of the domino functions. We then consider the shifted domino tableaux introduced by Chemli \cite{Chemli}, define a modification of these that allows $0$ entries, and introduce standard shifted domino tableaux. We show that the generating functions of the (modified) shifted domino tableaux, which we call \emph{shifted domino functions}, expand positively in the type-$B$ peak functions, with the expansion indexed by standard shifted domino tableaux. Similarly to how domino functions form a type-$B$ analogue of Schur functions, the shifted domino functions may be regarded as forming a type-$B$ analogue of Schur $Q$-functions. 

\subsection{Standard domino tableaux}
Let $\lambda\vdash 2n$. A \dfn{domino tiling} of $\lambda$ is a tiling of the Young diagram of shape $\lambda$ by $1\times 2$ and $2\times 1$ rectangles called \dfn{dominoes}. A \dfn{standard domino tableau} of shape $\lambda$ is a bijective filling of a domino tiling of $\lambda$ with entries from $[n]$ such that entries increase from left to right along rows and from top to bottom in columns (see \Cref{Fig1}). Let $\SDT(\lambda)$ denote the set of standard domino tableaux of shape $\lambda$.

\begin{figure}[ht]
  \begin{center}{\includegraphics[height=2cm]{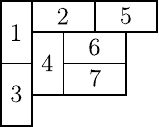}}
  \end{center}
  \caption{A standard domino tableau of shape $(5,4,4,1)\vdash 14$.}\label{Fig1}
\end{figure}

Suppose $T\in\SDT(\lambda)$. Let $\dom_i(T)$ denote the domino in $T$ filled with the entry $i$. A number $i\in [0,n-1]$ is a \dfn{descent} of $T$ if $i=0$ and $\dom_1(T)$ is vertical or if $i>0$ and $\dom_{i+1}(T)$ is strictly lower than $\dom_i(T)$. Let $\Des(T)$ be the set of descents of $T$. For example, the descent set of the domino tableau in \Cref{Fig1} is $\{0,2,5,6\}$. The \dfn{domino function} $G_\lambda$ \cite{MV} is defined by 
\[G_\lambda = \sum_{T\in \SDT(\lambda)}F^B_{\Des(T)}.\]
For $1\le i \le n-1$, let $s_i(T)$ be the tableau obtained by exchanging the entries $i$ and $i+1$. If $\dom_1(T)$ and $\dom_2(T)$ tile the northwest-most $2\times 2$ square in $T$, then we let $s_0(T)$ be the tableau obtained by switching the orientation of both these dominoes (i.e., from horizontal to vertical or vice versa); otherwise, we say $s_0(T)$ is not defined (in particular, we would say $s_0(T)\not\in\SDT(\lambda)$ in this case). If $s_0(T)$ is defined and $1$ appears above (respectively, left of) $2$ in $T$, then we place $1$ left of (respectively, above) $2$ in $s_0(T)$. We define operators $\pi_0, \pi_1, \ldots \pi_{n-1}$ on $\mathbb C\SDT(\lambda)$ (the complex vector space with basis $\SDT(\lambda)$) by
\[\pi_i(T) = \begin{cases} -T & \mbox{ if } i\in\Des(T) \\
                            0 & \mbox{ if } i\notin\Des(T)\mbox{ and }s_i(T)\notin\SDT(\lambda) \\
                            s_i(T) & \mbox{ if } i\notin\Des(T)\mbox{ and }s_i(T)\in\SDT(\lambda).
\end{cases}\]

\begin{example}
Let $\lambda=(4,3,3)$ and 
\[T=\begin{array}{l}
\includegraphics[height=1.507cm]{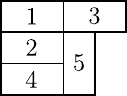}
\end{array}\in\SDT(\lambda).\] Then we have \[\pi_0(T)=\begin{array}{l}
\includegraphics[height=1.507cm]{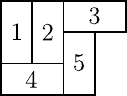}\end{array} \qquad\text{and}\qquad \pi_2(T)=\begin{array}{l} \includegraphics[height=1.507cm]{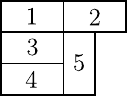}
\end{array}.\]
We also have $\pi_1(T)=\pi_3(T)=-T$ and $\pi_4(T)=0$. 
\end{example}

\begin{theorem}\label{thm:dominoHecke}
Let $\lambda\vdash 2n$. The operators $\pi_0, \pi_1, \ldots , \pi_{n-1}$ define an action of $H_n^B(0)$ on $\mathbb C\SDT(\lambda)$. 
\end{theorem}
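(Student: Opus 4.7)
The plan is to verify directly that the six defining relations \eqref{eq:Hn(0)_relations1}--\eqref{eq:Hn(0)_relations6} of $H_n^B(0)$ hold for the operators $\pi_0,\pi_1,\ldots,\pi_{n-1}$ on the basis $\SDT(\lambda)$. The ascent-compatibility framework of \cref{sec:ascentcompatible} does not apply verbatim here since $\SDT(\lambda)$ is not a subset of a Coxeter group, but the structure of the proof mirrors that of \cref{thm:ascentcompatible}. The quadratic relations $\pi_i^2=-\pi_i$ reduce to checking that whenever $i\notin\Des(T)$ and $s_i(T)\in\SDT(\lambda)$, one has $i\in\Des(s_i(T))$. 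For $i\geq 1$ this is immediate, since swapping the entries $i$ and $i+1$ in a valid SDT reverses their relative vertical placement; for $i=0$, the orientation switch in the NW $2\times 2$ block toggles $\dom_1$ between horizontal and vertical.

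For the commutation relations \eqref{eq:Hn(0)_relations2} and \eqref{eq:Hn(0)_relations5}, the essential point is that for $|i-j|\geq 2$ with $i,j\geq 1$ the swaps $s_i$ and $s_j$ involve disjoint pairs of entries, while $\pi_0$ only touches the NW $2\times 2$ square containing $\dom_1,\dom_2$ and $\pi_j$ for $j\geq 2$ only touches $\dom_j,\dom_{j+1}$. In each case a routine check confirms that the descent conditions and allowable domino moves are independent, so the operators commute. The type-$A$ braid relation \eqref{eq:Hn(0)_relations3} is then verified by the standard case enumeration on the relative positions of $\dom_i(T),\dom_{i+1}(T),\dom_{i+2}(T)$, parallel to the classical argument for the 0-Hecke action on $\SYT(\lambda)$.

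The main obstacle will be the type-$B$ braid relation \eqref{eq:Hn(0)_relations6}: $\pi_0\pi_1\pi_0\pi_1=\pi_1\pi_0\pi_1\pi_0$. Because $\pi_0$ simultaneously flips the orientations of both $\dom_1$ and $\dom_2$, the intermediate tableaux on the two sides can diverge substantially, and one must track at each of the four steps whether the operator is acting as multiplication by $-1$, as $0$, or as a genuine domino move. My plan is to enumerate configurations by whether $\dom_1\cup\dom_2$ fills a NW $2\times 2$ square (a prerequisite for $s_0$ to be defined on $T$) and by the position and orientation of $\dom_3$ (whether it lies in row $1$, row $3$, or column $3$, and whether it is horizontal or vertical). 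In each configuration I would compute both four-fold compositions explicitly and verify they coincide; in most cases one side collapses through the quadratic relation or a descent, but those configurations where both sides traverse three genuinely distinct SDTs before reaching the same tableau will require the most careful bookkeeping.
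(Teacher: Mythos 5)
There is a genuine gap in your treatment of the commutation relations. You claim that for $j\ge 2$ the commutation $\pi_0\pi_j=\pi_j\pi_0$ follows from ``independence'' because $\pi_0$ only touches $\dom_1,\dom_2$ and $\pi_j$ only touches $\dom_j,\dom_{j+1}$. That argument works for $j\ge 3$, but for $j=2$ both operators involve $\dom_2$: $\pi_0$ changes the positions of the boxes filling $\dom_1$ and $\dom_2$, while $\pi_2$ changes which domino carries the entry $2$ and the validity of its descent condition depends on the relative positions of $\dom_2$ and $\dom_3$. These two effects interact, and the relation $\pi_0\pi_2=\pi_2\pi_0$ is not automatic; one has to trace through the possible configurations of the entries $1,2,3$ near the northwest $2\times 2$ square (the paper handles precisely this by an explicit six-case enumeration). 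Your proposal as written dismisses this as routine and thus omits the piece of the proof that actually requires real work.

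You also misidentify the type-$B$ braid relation \eqref{eq:Hn(0)_relations6} as the main obstacle. It is in fact the easiest braid relation here: for any $T\in\SDT(\lambda)$, the case $\pi_1(T)=s_1(T)$ is impossible, because $\dom_1(T)$ occupies the northwest-most position and swapping entries $1,2$ would place $2$ in the northwest-most domino, violating row/column increase. Hence $\pi_1(T)\in\{-T,0\}$, and $\pi_0\pi_1\pi_0\pi_1(T)=\pi_1\pi_0\pi_1\pi_0(T)$ follows from a short finite check on the three values of $\pi_0(T)$. Your planned configuration analysis for this relation would eventually close, but you would be spending effort in the wrong place; if you instead redirect that case analysis to the $\pi_0\pi_2$ commutation, the proof becomes correct and close to the paper's.
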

\begin{proof}
We need to demonstrate that $\pi_0, \pi_1, \ldots , \pi_{n-1}$ satisfy the type-$B$ $0$-Hecke relations. Fix $T\in\SDT(\lambda)$. First, we show that $\pi_i^2(T)=-\pi_i(T)$. This is immediate if $\pi_i(T)=0$ or $\pi_i(T)=-T$. If $\pi_i(T)=s_i(T)$, then $i$ is a descent of $s_i(T)$, so $\pi_i^2(T) = -s_i(T)=-\pi_i(T)$.

Next, we show that $\pi_i\pi_j(T) = \pi_j\pi_i(T)$ whenever $j-i\geq 2$. If $i,j\geq 1$, then this is immediate since the action of $\pi_i$ only affects the dominoes with entries $i$ and $i+1$ while the action of $\pi_j$ only affects the dominoes with entries $j$ and $j+1$. A similar argument handles the case when $i=0$ and $j\geq 3$ since the action of $\pi_0$ can only affect the dominoes with entries $1$ and $2$. Now suppose $i=0$ and $j=2$; we need to show that $\pi_0\pi_2(T) = \pi_2\pi_0(T)$. If $\dom_1(T)$ is vertical, then $\pi_0(T)=-T$, so we need to check that $\pi_0\pi_2(T)=-\pi_2(T)$. This is immediate if $\pi_2(T)=0$. If $\pi_2(T)\neq 0$, then there is a unique tableau $T'$ in the support of $\pi_2(T)$, and $\dom_1(T')$ is vertical. In this case, $\pi_0(T')=-T'$, so the desired identity follows. Now assume $\dom_1(T)$ is horizontal. If there is not a horizontal domino immediately below $\dom_1(T)$ in $T$, then it is straightforward to check that $\pi_0\pi_2(T)$ and $\pi_2\pi_0(T)$ are both $0$. Thus, we may assume that the northwest-most $2\times 2$ square in $T$ is tiled by two horizontal dominoes. There are six remaining cases that depend on the relative positions of the entries $1,2,3$ in $T$; we can check them by hand as follows: 
\[\begin{array}{l}\includegraphics[height=10.195cm]{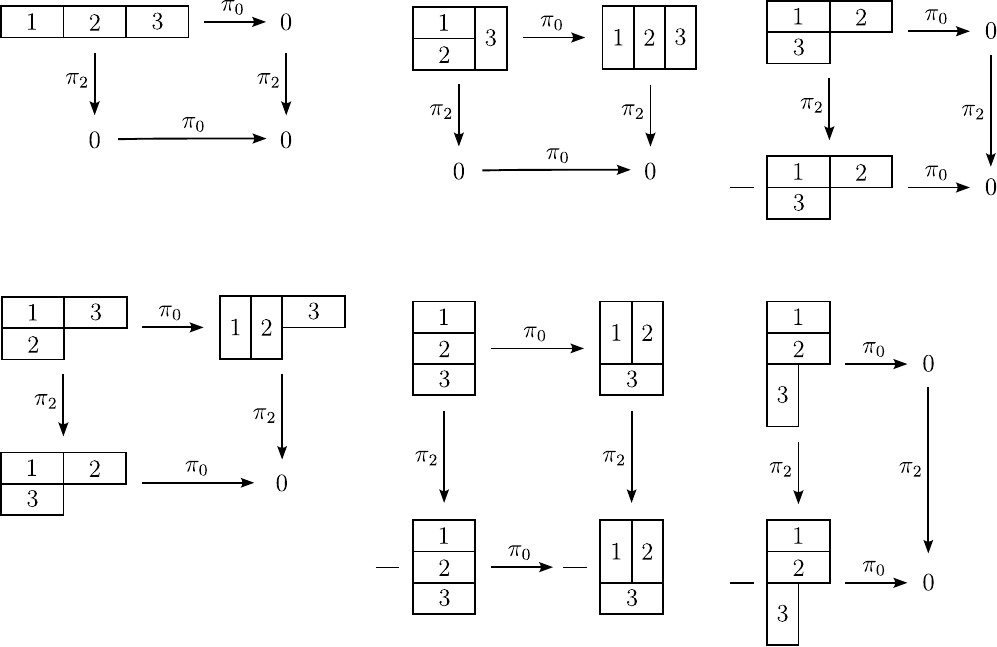}\end{array}\]

We next prove that $\pi_0\pi_1\pi_0\pi_1(T) = \pi_1\pi_0\pi_1\pi_0(T)$. The actions of $\pi_0$ and $\pi_1$ on $T$ depend only on the positions of $\dom_1(T)$ and $\dom_2(T)$. By the definition of domino tableaux, we cannot have $\pi_1(T)=s_1(T)$, so $\pi_0\pi_1\pi_0\pi_1(T)$ is either $-\pi_0\pi_1\pi_0(T)$ or $0$. If $\pi_0(T)=0$, then both $\pi_1\pi_0\pi_1\pi_0(T)$ and $\pi_0\pi_1\pi_0\pi_1(T)$ are $0$. If $\pi_0(T)=s_0(T)$, then we have $\pi_1(T)=-T$ and $\pi_1\pi_0(T)=0$, so again both $\pi_1\pi_0\pi_1\pi_0(T)$ and $\pi_0\pi_1\pi_0\pi_1(T)$ are $0$. If $\pi_0(T)=-T$, then $\pi_1\pi_0\pi_1\pi_0(T)$ and $\pi_0\pi_1\pi_0\pi_1(T)$ are both $0$ if $\pi_1(T)=0$ and are both equal to $T$ if $\pi_1(T)=-T$. 

Finally, we show that for $i\geq 1$, we have $\pi_i\pi_{i+1}\pi_i(T) = \pi_{i+1}\pi_i\pi_{i+1}(T)$. We consider three cases. 

\medskip
\noindent
{\bf Case 1.} Suppose $\pi_i(T)=0$. In this case, $\pi_i\pi_{i+1}\pi_i(T)=0$, and it follows immediately that ${\pi_{i+1}\pi_i\pi_{i+1}(T) = 0}$ if either $\pi_{i+1}(T)=0$ or $\pi_{i+1}(T)=-T$. Suppose $\pi_{i+1}(T) = s_{i+1}(T)$. Then $\dom_{i+1}(T)$ shares a row with $\dom_i(T)$, and $\dom_{i+2}(T)$ is strictly above and strictly right of $\dom_{i+1}(T)$. It follows that $\pi_i\pi_{i+1}(T) = s_is_{i+1}(T)$. The dominoes occupied by $i$ and $i+1$ in $T$ are occupied by $i+1$ and $i+2$, respectively, in $s_is_{i+1}(T)$, so $\pi_{i+1}\pi_i\pi_{i+1}(T) = \pi_{i+1}s_is_{i+1}(T) = 0$. 

\medskip
\noindent
{\bf Case 2.} Suppose $\pi_i(T)=-T$. If $\pi_{i+1}(T)=0$, then $\pi_{i}\pi_{i+1}\pi_{i}(T)$ and $\pi_{i+1}\pi_{i}\pi_{i+1}(T)$ are both $0$. If $\pi_{i+1}(T)=-T$, then $\pi_{i}\pi_{i+1}\pi_{i}(T)$ and $\pi_{i+1}\pi_{i}\pi_{i+1}(T)$ are both equal to $-T$. Now assume that $\pi_{i+1}(T)=s_{i+1}(T)$. Let $X=\pi_i(s_{i+1}(T))$. Since $\pi_{i}\pi_{i+1}\pi_{i}(T)=-X$ and ${\pi_{i+1}\pi_{i}\pi_{i+1}(T)=\pi_{i+1}(X)}$, we need to show that $\pi_{i+1}(X)=-X$. This is obvious if $X=0$. If $X=s_is_{i+1}(T)$, then we have $\dom_i(T)=\dom_{i+1}(X)$ and $\dom_{i+1}(T)=\dom_{i+2}(X)$, so $i+1\in\Des(X)$ (since $i\in\Des(T)$). In this case, $\pi_{i+1}(X)=-X$, as desired. Finally, suppose $X=-s_{i+1}(T)$. Because $\pi_{i+1}(T)=s_{i+1}(T)$, it follows from the definition of the action of $\pi_{i+1}$ that $i+1\in\Des(s_{i+1}(T))$. Consequently, ${\pi_{i+1}(X)=-\pi_{i+1}(s_{i+1}(T))=s_{i+1}(T)=-X}$. 

\medskip
\noindent
{\bf Case 3.} Suppose $\pi_i(T)=s_i(T)$. For this case, let us first assume $\pi_{i+1}(T)=0$. Then $\dom_{i+1}(T)$ is strictly above $\dom_i(T)$ but not strictly above $\dom_{i+2}(T)$. It follows that $\dom_{i+1}(s_i(T))$ is not strictly above $\dom_{i+2}(s_i(T))$, so $\pi_{i+1}(s_i(T))$ is either $0$ or $s_{i+1}s_i(T)$. If $\pi_{i+1}(s_i(T))=0$, then $\pi_{i}\pi_{i+1}\pi_{i}(T)$ and $\pi_{i+1}\pi_{i}\pi_{i+1}(T)$ are both $0$. On the other hand, if $\pi_{i+1}(s_i(T))=s_{i+1}s_i(T)$, then we have ${\dom_i(\pi_{i+1}\pi_i(T))=\dom_{i+1}(T)}$ and $\dom_{i+1}(\pi_{i+1}\pi_i(T))=\dom_{i+2}(T)$, so the assumption that $\pi_{i+1}(T)=0$ implies that $\pi_{i}\pi_{i+1}\pi_i(T)=0=\pi_{i+1}\pi_i\pi_{i+1}(T)$. 

Now assume $\pi_{i+1}(T)\neq 0$. Then $\pi_{i+1}(T)$ is either $-T$ or $s_{i+1}(T)$. These two subcases are similar, so we will only consider the subcase in which $\pi_{i+1}(T)=-T$. This means that $\dom_{i+1}(T)$ is strictly above both $\dom_{i}(T)$ and $\dom_{i+2}(T)$. Let $X=\pi_{i+1}\pi_i(T)$. If $X=0$, then $\pi_{i}\pi_{i+1}\pi_{i}(T)$ and $\pi_{i+1}\pi_{i}\pi_{i+1}(T)$ are both $0$, so we may assume $X$ is either $s_{i+1}s_i(T)$ or $-s_i(T)$. Let $T'$ be the unique tableau in the support of $X$. We have $\pi_i\pi_{i+1}\pi_i(T)=\pi_i(X)$ and $\pi_{i+1}\pi_i\pi_{i+1}(T)=-X$, so we need to show that $\pi_i(X)=-X$. Since the action of $\pi_{i+1}$ does not affect the domino with entry $i$, we have $\dom_i(T')=\dom_i(s_i(T))=\dom_{i+1}(T)$. Because $\dom_{i+1}(T)$ is strictly above $\dom_{i}(T)$ and $\dom_{i+2}(T)$, we see that $i\in\Des(T')$. It follows that $\pi_i(T')=-T'$, so $\pi_i(X)=-X$. 
\end{proof}

In what follows, we write $\mathbb C\SDT(\lambda)$ for the $H_n^B(0)$-module from \Cref{thm:dominoHecke}. 

\begin{theorem}\label{thm:dominocharacteristic}
For $\lambda\vdash 2n$, we have $\ch^B(\mathbb C\SDT(\lambda))=G_\lambda$. 
\end{theorem}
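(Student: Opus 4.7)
The plan is to adapt the argument of \cref{thm:ascent_compatible_B} by constructing a composition series of $\mathbb{C}\SDT(\lambda)$ whose successive factors are the simple modules $\mathcal{S}^B_{\Des(T)}$ for $T\in\SDT(\lambda)$. The role played there by the left weak order will be played here by a partial order on $\SDT(\lambda)$ generated intrinsically by the ``ascent moves'' of the operators $\pi_i$.

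First, I would introduce a partial order $\preceq$ on $\SDT(\lambda)$ generated by the covering relations $T\prec T'$ whenever there exists $i\in[0,n-1]$ with $\pi_i(T)=s_i(T)=T'$ (that is, the case in which $\pi_i$ produces a tableau that is neither $0$ nor $-T$). To verify that $\preceq$ is actually a partial order, I would exhibit a statistic on $\SDT(\lambda)$ that strictly increases along each covering. A natural candidate is
\[
\phi(T) = M\cdot V(T)+\sum_{k=1}^n k\cdot r_k(T),
\]
where $V(T)$ counts the vertical dominoes of $T$, $r_k(T)$ is the row of the upper-left cell of $\dom_k(T)$, and $M$ is chosen large enough (for instance $M=2n$) so that an increase of $V$ by $2$ dominates any bounded decrease in $\sum_k k\cdot r_k(T)$. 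For $i\geq 1$, a $\pi_i$-move fixes the underlying tiling (hence $V$) while swapping the labels $i,i+1$; since $i\in\Des(T')$ necessarily (as used in the proof of \cref{thm:dominoHecke}), one deduces $r_i(T)>r_{i+1}(T)$, and a direct calculation yields $\phi(T')-\phi(T)=r_i(T)-r_{i+1}(T)>0$. For a $\pi_0$-move, two horizontal dominoes in the NW $2\times 2$ block become vertical, increasing $V$ by $2$ and changing $\sum_k k\cdot r_k(T)$ by a bounded amount, so $\phi$ strictly increases for $M$ chosen as above.

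Having established that $\preceq$ is a partial order, choose any linear extension $T_m,T_{m-1},\ldots,T_1$ of $\preceq$ in which $\preceq$-larger tableaux receive smaller indices, and set $N_k=\mathrm{span}_{\mathbb{C}}(T_1,\ldots,T_k)$. For each $T_j$ and each $\pi_s$, the image $\pi_s(T_j)$ is one of $0$, $-T_j$, or a tableau $T_l$ with $T_j\prec T_l$; in the last case, $l<j$ by the choice of ordering, so $T_l\in N_{j-1}\subseteq N_k$ whenever $j\leq k$. Consequently
\[
0=N_0\subseteq N_1\subseteq\cdots\subseteq N_m=\mathbb{C}\SDT(\lambda)
\]
is a filtration of $H_n^B(0)$-submodules, and each quotient $N_k/N_{k-1}$ is one-dimensional with $\pi_j$ acting as $-1$ if $j\in\Des(T_k)$ and as $0$ otherwise. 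Thus $N_k/N_{k-1}\cong\mathcal{S}^B_{\Des(T_k)}$, and summing over $k$ gives
\[
\ch^B(\mathbb{C}\SDT(\lambda))=\sum_{k=1}^m F^B_{\Des(T_k)}=\sum_{T\in\SDT(\lambda)}F^B_{\Des(T)}=G_\lambda.
\]

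The main obstacle is the monotonicity of $\phi$; in particular, one must translate the definition of ``strictly lower'' for dominoes of possibly mixed orientations into the inequality $r_i(T)>r_{i+1}(T)$ under the hypothesis that $i\in\Des(s_i(T))$. Once this bookkeeping is confirmed, the rest is a direct reprise of the filtration strategy used to prove \cref{thm:ascent_compatible_B}.
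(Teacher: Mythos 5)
Your proposal takes the same overall route as the paper: introduce a relation $\preceq$ on $\SDT(\lambda)$ generated by the nontrivial $\pi_i$-moves, argue it is a partial order, take a linear extension, and read off a composition series with one-dimensional factors $\mathcal S^B_{\Des(T_k)}$, so that summing the characteristics gives $G_\lambda$. The filtration bookkeeping in your last two paragraphs is correct.

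Where you diverge from the paper is in \emph{how} antisymmetry of $\preceq$ is established, and this is worth flagging because your version is the one that actually holds up. The paper's proof asserts that $T\preceq T'$ implies $\Des(T)\supseteq\Des(T')$, with equality only if $T=T'$. That descent-set monotonicity is false. For a concrete counterexample, take $\lambda=(4,4)$ and let $T\in\SDT(\lambda)$ have horizontal dominoes with entry $1$ in cells $(1,1),(1,2)$, entry $2$ in $(2,1),(2,2)$, entry $3$ in $(1,3),(1,4)$, and entry $4$ in $(2,3),(2,4)$. Then $\Des(T)=\{1,3\}$, but $\pi_2(T)=s_2(T)=T'$ is the tableau with row $1$ reading $1,1,2,2$ and row $2$ reading $3,3,4,4$, and $\Des(T')=\{2\}$, which is not a subset of $\Des(T)$. (In fact the two descent sets are incomparable.) Your statistic $\phi(T)=M\cdot V(T)+\sum_k k\cdot r_k(T)$ gives a correct replacement: a $\pi_0$-move changes $(V,\sum k\cdot r_k)$ by $(+2,-2)$, hence increases $\phi$ by $2M-2>0$; and a $\pi_i$-move with $i\geq 1$ preserves $V$ and increases $\sum k\cdot r_k$ by exactly $r_i(T)-r_{i+1}(T)$, which is positive because, as recorded in the proof of \cref{thm:dominoHecke}, $i\in\Des(s_i(T))$, i.e.\ $\dom_i(T)$ is strictly lower than $\dom_{i+1}(T)$, forcing $r_i(T)>r_{i+1}(T)$. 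So $\phi$ strictly increases along covers, $\preceq$ is antisymmetric, and the rest of your argument goes through. In short: same structure as the paper, but with a different and in fact necessary justification of the key poset step.
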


\begin{proof}
For $T,T'\in\SDT(\lambda)$, let us write $T\preceq T'$ if there is a (possibly empty) sequence $i_1,\ldots,i_r$ of indices in $[0,n-1]$ such that $T'=\pi_{i_r}\cdots\pi_{i_1}(T)$. We claim that $\preceq$ is a partial order. It is clear that $\preceq$ is reflective and transitive. It follows from the definition of the action of $\pi_0,\ldots,\pi_{n-1}$ that if $T\preceq T'$, then $\Des(T)\supseteq\Des(T')$, where equality holds if and only if $T=T'$. This implies that $\preceq$ is antisymmetric. 

Let $m=|\SDT(\lambda)|$, and let $T_1,\ldots,T_m$ be a linear extension of the poset $(\SDT(\lambda),\preceq)$. For $0\leq k\leq m$, let ${\bf N}_k=\text{span}\{T_1,\ldots,T_k\}$. Using an argument very similar to the one used in the proof of \cref{thm:ascent_compatible_B}, we find that ${\bf N}_k/{\bf N}_{k-1}$ is isomorphic to the simple $H_n^B(0)$-module $\mathcal S_{\Des(T_k)}^B$. We have the filtration $0={\bf N}_0\subseteq{\bf N}_1\subseteq\cdots\subseteq{\bf N}_m=\mathbb C\SDT(\lambda)$, so 
\[\ch^B(\mathbb C\SDT(\lambda))=\sum_{k=1}^m\ch^B({\bf N}/{\bf N}_{k-1})=\sum_{k=1}^m F_{\Des(T_k)}^B=G_\lambda. \qedhere\] 
\end{proof}

\subsection{Shifted domino tableaux}\label{subsec:shifted}

The \dfn{$2$-quotient} of a partition $\lambda=(\lambda_1, \ldots , \lambda_k)$ is the pair $(\mu,\nu)$ of partitions obtained as follows. Let $\lambda^\star$ be the partition obtained by adding $k-i$ to the part $\lambda_i$ for each $1\le i \le k$. Now let $w$ be obtained from $\lambda^\star$ by replacing the odd parts of $\lambda^\star$ with $1,3,5,\ldots$ and the even parts of $\lambda^\star$ with $0,2,4,\ldots$, from right to left. Then $\mu$ (respectively, $\nu$) is the partition found by subtracting the even (respectively, odd) entries of $w$ from the even (respectively, odd) entries of $\lambda^\star$, dividing the result by $2$, and removing any $0$ entries. 

\begin{example}
If $\lambda=(7,7,6,5,1)$, then $\lambda^\star=(11,10,8,6,1)$, $w=(3,4,2,0,1)$, $\mu=(3,3,3)$, and $\nu=(4)$. 
\end{example}

Let $\lambda$ be a partition whose $2$-quotient $(\mu,\nu)=((\mu_1, \ldots , \mu_p),(\nu_1, \ldots, \nu_q))$ satisfies $\mu_p\ge p$ and $\nu_q\ge q$. A domino tiling of $\lambda$ is a \dfn{shifted tiling} of $\lambda$ if there is no vertical domino $d$ lying on the main diagonal such that all dominoes left of $d$ and adjacent to $d$ are strictly below the main diagonal. Given a shifted tiling of $\lambda$, we define a \dfn{semistandard shifted domino tableau} of shape $\lambda$ to be a filling of the dominoes in the tiling lying weakly above the main diagonal with entries from the totally ordered set $\{0<1'<1<2'<2<\cdots\}$ such that
\begin{enumerate}
    \item entries weakly increase from left to right along rows and from top to bottom in columns;
    \item each row contains at most one $i'$, and each column contains at most one $i$;
    \item the northwest-most domino may be filled with $0$ only if it is horizontal.
\end{enumerate}
This definition is a modification of the one given by Chemli \cite{Chemli}. Denote the set of semistandard shifted domino tableaux of shape $\lambda$ by $\SSShDT(\lambda)$. For $T\in \SSShDT(\lambda)$, let $\wt(T)$ denote the \dfn{weight} of $T$, which we define to be the vector $(\wt_0(T),\wt_1(T),\ldots)$, where $\wt_i(T)$ is the number of entries in $T$ equal to $i$ or $i'$ (in particular, $\wt_0(T)$ is the number of $0$ entries in $T$). Let us write $x^{\wt(T)}=x_0^{\wt_0(T)}x_1^{\wt_1(T)}\cdots$. We define the \dfn{shifted domino function} indexed by $\lambda$ to be
\[H_\lambda = \sum_{T\in \SSShDT(\lambda)}x^{\wt(T)}.\]

\begin{figure}[ht]
  \begin{center}{\includegraphics[height=2.493cm]{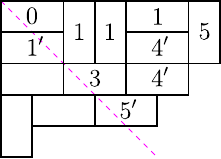}\qquad\qquad\qquad\qquad\includegraphics[height=2.493cm]{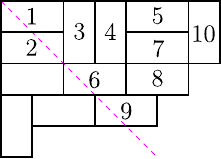}}
  \end{center}
  \caption{A semistandard shifted domino tableau $T\in\SSShDT(\lambda)$ (left) and a standard shifted domino tableau $U\in\SShDT(\lambda)$ (right), where $\lambda=(7,7,6,5,1)$. We have $\wt(T)=(1,4,0,1,2,2)$ and $\Des(U)=\{1,5,7,8\}$.}\label{Fig2}
\end{figure}

Define a \dfn{standard shifted domino tableau} of shape $\lambda$ to be a bijective filling of the dominoes weakly above the main diagonal in a shifted tiling of $\lambda$ with entries in $[m]$ (where $m$ is the number of dominoes weakly above the main diagonal) such that entries strictly increase from left to right along rows and from top to bottom in columns. See the right of \Cref{Fig2}. Let $\SShDT(\lambda)$ denote the set of standard shifted domino tableaux of shape $\lambda$. For $Q\in\SShDT(\lambda)$, let $\dom_i(Q)$ be the domino filled with the entry $i$ in $Q$. A number $i\in [0,m-1]$ is a \dfn{descent} of $Q$ if $i=0$ and $\dom_1(Q)$ is vertical or if $i>0$ and $\dom_{i+1}(Q)$ is strictly lower than $\dom_i(Q)$. Let $\Des(Q)$ be the set of descents of $Q$. In \Cref{cor:H_lambda}, we will prove that
\[H_\lambda = \sum_{Q\in \SShDT(\lambda)}\Delta^B(\Des(Q)),\] 
where $\Delta^B$ is as defined in \eqref{eq:Delta}.

Define a \dfn{marked standard shifted domino tableau} to be a standard shifted domino tableau in which each entry is either primed or unprimed. (So a standard shifted domino tableau with $n$ dominoes yields $2^n$ marked standard shifted domino tableaux.) Let $\MSShDT(\lambda)$ denote the set of marked standard shifted domino tableaux of shape $\lambda$. Given $S\in \MSShDT(\lambda)$, let $\demark(S)\in\SShDT$ be the standard shifted domino tableau obtained by removing all primes from $S$. We write $\dom_i(S)$ for the domino in $S$ that contains $i$ or $i'$ (so $\dom_i(S)=\dom_i(\demark(S))$). We say an index $i\in[0,n-1]$ is a \dfn{descent} of $S\in \MSShDT(\lambda)$ if one of the following holds:
\begin{itemize}
\item $i=0$, and either $1'$ is in $S$ or $\dom_1(S)$ is vertical (or both);
\item $i>0$, $i$ is in $S$, and $i\in\Des(\demark(S))$; 
\item $i>0$, $(i+1)'$ is in $S$, and $i\not\in\Des(\demark(S))$.
\end{itemize}
Let $\Des(S)$ denote the set of descents of $S$. 

\begin{lemma}\label{lem:descentininterval}
Let $S\in \MSShDT(\lambda)$. Suppose there exist $i,j\in[n]$ with $i<j$ such that one of the following holds: 
\begin{enumerate}[\normalfont(I)]
\item\label{item:descentinterval1} $i$ and $j'$ are in $S$; 
\item\label{item:descentinterval2} $i$ and $j$ are in $S$, and $\dom_j(S)$ is strictly lower than $\dom_i(S)$; 
\item\label{item:descentinterval3} $i'$ and $j'$ are in $S$, and $\dom_i(S)$ is weakly below $\dom_j(S)$.
\end{enumerate}
Then there is a descent of $S$ in $[i,j-1]$.
\end{lemma}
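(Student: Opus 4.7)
The plan is to argue by contradiction, assuming that no $k\in[i,j-1]$ is a descent of $S$ and deriving a contradiction in each of the three cases. The linchpin is a single observation about the primed/unprimed bookkeeping at consecutive positions: if $k\in[1,n-1]$ has $k$ unprimed in $S$ and $(k+1)'$ in $S$, then $k$ is automatically a descent of $S$, since either the second or third bullet of the descent definition applies according to whether $k\in\Des(\demark(S))$. As a direct consequence, once an unprimed entry appears somewhere in positions $i,i+1,\dots,j$, no primed entry can appear at a strictly later position $\le j$ without forcing a descent in $[i,j-1]$.

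For case~\ref{item:descentinterval1}, I would pick the smallest index $k\in[i+1,j]$ such that $k$ is primed in $S$; this exists because $j'\in S$. Minimality together with the fact that $i$ itself is unprimed guarantees that position $k-1\in[i,j-1]$ carries an unprimed entry, while $(k)'\in S$, so the observation above immediately produces the required descent at $k-1$.

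For case~\ref{item:descentinterval2}, I would first apply the same observation to rule out any primed entry strictly between $i$ and $j$: were there one, case~\ref{item:descentinterval1} applied to the pair $(i,\text{that entry})$ would already supply a descent in $[i,j-1]$. Hence every entry in $[i,j]$ is unprimed, so at each $k\in[i,j-1]$ only the second bullet of the descent definition can contribute. The no-descent assumption then forces $k\notin\Des(\demark(S))$ for every such $k$, meaning $\dom_{k+1}(S)$ is weakly above $\dom_k(S)$ at each step. By transitivity of row-height comparison, $\dom_j(S)$ is weakly above $\dom_i(S)$, contradicting the hypothesis that $\dom_j(S)$ is strictly lower than $\dom_i(S)$.

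For case~\ref{item:descentinterval3}, a mirror argument works. Since $j$ is primed, the same transition trick (no unprimed-to-primed step is permitted in $[i,j-1]$ under the no-descent assumption) forces every entry in $[i,j]$ to be primed. Then at each $k\in[i,j-1]$ only the third bullet can provide a descent, so the no-descent hypothesis gives $k\in\Des(\demark(S))$ for each such $k$, i.e.\ $\dom_{k+1}(S)$ is strictly lower than $\dom_k(S)$ at every step. By transitivity, $\dom_j(S)$ is strictly lower than $\dom_i(S)$, contradicting $\dom_i(S)$ weakly below $\dom_j(S)$. The main obstacle I anticipate is simply keeping the bookkeeping clean: one must verify the four possible primed/unprimed patterns at $(k,k+1)$ and identify which force a descent; once the pattern $(\text{unprimed},\text{primed})\Rightarrow\text{descent}$ is extracted, everything else reduces to transitivity of the domino height comparison, which is routine.
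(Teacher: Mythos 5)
Your argument for cases~\eqref{item:descentinterval1} and~\eqref{item:descentinterval3} is sound and essentially matches the paper's: the observation that an unprimed-to-primed transition at consecutive positions forces a descent is exactly the right lever, and in case~\eqref{item:descentinterval3} "strictly lower" genuinely is a transitive relation, so the chain argument goes through.

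There is a real gap in case~\eqref{item:descentinterval2}, however. You pass from "$k\notin\Des(\demark(S))$" (i.e., $\dom_{k+1}$ is \emph{not strictly lower} than $\dom_k$) to "$\dom_{k+1}$ is weakly above $\dom_k$," and then invoke transitivity. But "not strictly lower" is not the same as "weakly above," and crucially "not strictly lower" is not transitive for dominoes, since a domino occupies two cells. For example, a horizontal $\dom_k$ in row $r$ followed by a vertical $\dom_{k+1}$ occupying rows $r$ and $r+1$ is \emph{not} strictly lower than $\dom_k$ (they share row $r$), yet $\dom_{k+1}$ extends a row further down; two such steps in succession produce a $\dom_{k+2}$ that \emph{is} strictly lower than $\dom_k$, breaking the chain argument. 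The paper's proof confronts exactly this: it isolates the one shape-constrained configuration (horizontal $\dom_k$, vertical $\dom_{k+1}$ immediately to its right, both meeting the main diagonal of the shifted shape) where a cell of $\dom_{k+1}$ can lie strictly below a cell of $\dom_k$ without a descent occurring, and then argues that between any two occurrences of this configuration there must be some $\ell$ with $\dom_{\ell+1}$ strictly lower than $\dom_\ell$, yielding a descent after all. To close your gap, you would need to replicate that analysis (or find another way to rule out repeated "diagonal step-down" configurations); the plain transitivity claim as written does not hold.
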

\begin{proof}
Suppose by way of contradiction that there exist $i<j$ satisfying one of the conditions \eqref{item:descentinterval1}, \eqref{item:descentinterval2}, \eqref{item:descentinterval3} such that there are no descents of $S$ in $[i,j-1]$. We may choose such $i$ and $j$ so as to minimize the quantity $j-i$. 

Suppose $i$ and $j$ satisfy condition \eqref{item:descentinterval1}. Then there must exist $k\in[i,j-1]$ such that $k$ and $(k+1)'$ are in $S$. But then $k$ must be in $\Des(S)$, which is a contradiction. 

Now suppose $i$ and $j$ satisfy condition \eqref{item:descentinterval2}. For every $k\in[i+1,j-1]$, we know that $k$ is in $S$ since, otherwise, $i$ and $k$ would satisfy condition \eqref{item:descentinterval1}, contradicting the minimality of $j-i$. Since $k$ is not a descent of $S$, it is not a descent of $\demark(S)$. Thus, $\dom_{k+1}(\demark(S))$ is not strictly lower than $\dom_k(\demark(S))$. From the definition of a standard shifted domino tableau, the only way that part of $\dom_{k+1}(\demark(S))$ can be strictly lower than part of $\dom_k(\demark(S))$ is if $\dom_k(\demark(S))$ is horizontal, $\dom_{k+1}(\demark(S))$ is vertical and immediately right of $\dom_{k}(\demark(S))$, and the main diagonal intersects both of these dominoes. To satisfy condition \eqref{item:descentinterval2}, both parts of $\dom_j(\demark(S))$ must be strictly below both parts of $\dom_i(\demark(S))$. This means the configuration described has to occur for at least two different values of $k$ between $i$ and $j$. But in any two occurrences of this configuration, both parts of each domino in the later occurrence are strictly below both parts of each domino in the earlier occurrence. Therefore, between two occurrences of this configuration, there must be some $\ell$ for which both parts of $\dom_{\ell+1}(\demark(S))$ are strictly below both parts of $\dom_{\ell}(\demark(S))$, which contradicts the assumption that there are no descents of $S$ in $[i, j-1]$.

Finally, suppose $i$ and $j$ satisfy condition \eqref{item:descentinterval3}. For every $k\in[i,j-1]$, we know that $(k+1)'$ is in $S$ since, otherwise, $k+1$ and $j$ would satisfy condition \eqref{item:descentinterval1}, contradicting the minimality of $j-i$. Since $k$ is not a descent of $S$ and $(k+1)'$ is in $S$, we must have $k\in\Des(\demark(S))$. Thus, $\dom_{k+1}(\demark(S))$ is strictly lower than $\dom_k(\demark(S))$. As this is true for all $k\in[i,j-1]$, it follows that $\dom_j(\demark(S))$ is strictly lower $\dom_i(\demark(S))$, which contradicts our assumption that $i$ and $j$ satisfy condition \eqref{item:descentinterval3}. 
\end{proof}

Given $T\in \SSShDT(\lambda)$, let $\stand(T)\in \MSShDT(\lambda)$ be the \dfn{standardization} of $T$ obtained as follows. Let $n_1 < n_2 < \cdots<n_r$ be the positive integers $i$ such that $\wt_i(T)\neq 0$. Replace the $0$ entries in $T$ from left to right with integers $1, 2, \ldots , \wt_0(T)$. Then replace the entries $n_1'$ and $n_1$ in $T$ with $\wt_0(T)+1, \ldots , \wt_0(T)+\wt_{n_1}(T)$ or their primed versions (replacing primed entries with primed integers and unprimed entries with unprimed integers) in the following order: first replace entries $n_1'$ from top to bottom; then replace entries $n_1$ from left to right. This is well defined since all $n_1'$ and $n_1$ entries appear either strictly below or strictly right of each $0$ entry in $T$, the entries $n_1'$ form a vertical strip (no two dominoes in the same row both have entry $n_1'$), the entries $n_1$ form a horizontal strip (no two dominoes in the same column both have entry $n_1$), and every $n_1$ is either strictly below or strictly right of each $n_1'$. Then continue in the same manner with $n_2'$ and $n_2$, and so on. 

We now show that the semistandard shifted domino tableaux with a prescribed stardardization $S\in \MSShDT(\lambda)$ generate the type-$B$ fundamental quasisymmetric function corresponding to the descent set of $S$.

\begin{theorem}\label{thm:stand}
Let $S\in \MSShDT(\lambda)$. Then
\[\sum_{\substack{T\in \SSShDT(\lambda) \\ \stand(T)=S}}\!\!\!\!\!x^{\wt(T)} = F^B_{\Des(S)}.\]
\end{theorem}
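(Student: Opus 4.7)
The plan is to establish a weight-preserving bijection between $\{T \in \SSShDT(\lambda) : \stand(T) = S\}$ and the set $\mathcal I(S)$ of tuples $(i_1,\ldots,i_n)\in\mathbb Z_{\ge 0}^n$ satisfying $0 \le i_1 \le \cdots \le i_n$ and $i_k < i_{k+1}$ for every $k \in \Des(S)$; by definition this set indexes the monomials of $F^B_{\Des(S)}$. The bijection sends $T$ to the sequence of integer parts $(|T(\dom_k(S))|)_{k=1}^n$ (with $|0|=0$ and $|j|=|j'|=j$); conversely, given $(i_k)\in\mathcal I(S)$, one fills $\dom_k(S)$ with $0$ if $i_k=0$, with $i_k'$ if $S$ contains $k'$, and with unprimed $i_k$ otherwise. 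Weight preservation $x^{\wt(T)}=x_{i_1}\cdots x_{i_n}$ is then immediate, so everything reduces to verifying that the two maps above are mutually inverse and land in the stated sets.

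For the forward direction, the weak inequality $i_k\le i_{k+1}$ is immediate from $\stand(T)=S$: the labels of $S$ are assigned to $T$'s entries in weakly increasing order under the total order $0<1'<1<2'<\cdots$. Strict inequality at $k\in\Des(S)$ is handled by cases on the primedness of $k$ and $k+1$ in $S$. In the case "$k$ unprimed, $(k+1)'$ primed" (always a descent), strictness is forced by the total order alone, since unprimed $j$ exceeds primed $j$. In the "both unprimed with $\dom_{k+1}$ strictly lower than $\dom_k$" case, assuming $i_k=i_{k+1}=j$ gives two unprimed $j$ dominoes in $T$; by the leftmost-column standardization rule, $\dom_{k+1}$ lies strictly to the right of $\dom_k$, so the cell at the intersection of $\dom_k$'s bottom row and $\dom_{k+1}$'s leftmost column is forced by weak increase to carry value $j$, producing a third $j$-valued domino that either violates rule~(2) (if unprimed, by sharing a column with $\dom_{k+1}$) or the row weak-increase rule (if primed, since $j'<j$). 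In the "both primed with $\dom_{k+1}$ not strictly lower" case, assuming $i_k=i_{k+1}$ produces two primed $j'$ dominoes; rule~(2) (disjoint rows for primed) combined with top-to-bottom standardization forces $\dom_{k+1}$ strictly below $\dom_k$, contradicting the descent hypothesis.

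For the inverse direction, the constructed $T$ must lie in $\SSShDT(\lambda)$ and satisfy $\stand(T)=S$. Rule~(3) follows from the strictness condition at $k=0$ in $\mathcal I(S)$. Rules~(1) and~(2) reduce to showing $i_p<i_q$ for $p<q$ whenever cells of $\dom_p(S)$ and $\dom_q(S)$ share a row or column in a configuration where the total order or rule~(2) would force strict inequality of the entries; by direct inspection these configurations match precisely the hypotheses (I), (II), (III) of \Cref{lem:descentininterval} (the row/column sharing plus the positional relationship between the dominoes implied by strictness of $\demark(S)\in\SShDT(\lambda)$). The lemma then produces a descent $k\in[p,q-1]$, giving $i_p\le i_k<i_{k+1}\le i_q$. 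Finally, $\stand(T)=S$ is verified by partitioning $\{1,\ldots,n\}$ into maximal runs of consecutive labels with constant $i_k$; within each run, the absence of internal descents (cases (i), (iii), (iv) of the descent definition) forces $S$'s labeling to coincide with the standardization order---primed entries before unprimed, primed ordered top-to-bottom via case (iv) (which forces $\dom_{m+1}$ strictly below $\dom_m$), and unprimed ordered left-to-right via case (i) combined with strictness in $\demark(S)$.

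The main obstacle is the geometric impossibility argument in the "both unprimed" subcase of the forward direction, where the forced-corner-cell analysis requires careful use of the weak-increase rule together with rule~(2), keeping in mind that the total ordering has $j'<j$ so that a primed $j'$ cannot appear to the right of an unprimed $j$ in the same row.
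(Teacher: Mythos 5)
Your proof takes essentially the same approach as the paper's: both set up a weight-preserving bijection between $\{T:\stand(T)=S\}$ and the index set of $F^B_{\Des(S)}$, both invoke \Cref{lem:descentininterval} to verify the semistandard conditions in the reverse direction, and both verify $\stand(T)=S$ from the structure of the standardization order. Your forward-direction split by the primedness of $k$ and $k+1$ (with the corner-cell argument in the both-unprimed case) is a valid alternative to the paper's split by the two descent clauses, which instead invokes the geometric fact that $\dom_{i+1}(S)$ lies weakly left of $\dom_i(S)$ when $i\in\Des(\demark(S))$.

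There is one small gap in the reverse direction: your construction sets $T_k=0$ whenever $i_k=0$, but if $\dom_k(S)$ were primed this would silently drop the prime, and $\stand(T)$ would then fail to recover $S$. You need to verify that $k'\in S$ forces $i_k>0$; equivalently, that no $\dom_k(S)$ in the initial block $\{k : i_k=0\}$ is primed. The paper addresses this: within that block the first primed domino $\dom_j$ (if any) would immediately follow an unprimed $\dom_{j-1}$, and having $j-1\in S$ and $j'\in S$ simultaneously forces $j-1\in\Des(S)$ by one of the two descent clauses (depending on whether $j-1\in\Des(\demark(S))$), contradicting $i_{j-1}=i_j=0$. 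Adding this observation closes the gap.
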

\begin{proof}
Suppose $\stand(T)=S$, and let $n_1 < n_2 < \cdots< n_r$ be the positive integers $i$ such that $\wt_i(T)\neq 0$. The semistandard shifted domino tableau $T$ is obtained from $S$ by replacing the entries $1,2,\ldots,\wt_0(T)$ with $0$, replacing the entries $\wt_0(T)+1, \ldots , \wt_0(T)+ \wt_{n_1}(T)$ with $n_1$, and so on, and keeping the primes in the same dominoes as in $S$. Let $T_i$ denote the entry that fills the domino $\dom_i(S)$ in the tableau $T$, and let $|T_i|$ be the unprimed version of $T_i$ (so if $T_i\in \{k',k\}$, then $|T_i|=k$). 

We first show that $\{\wt_0(T), \wt_0(T)+\wt_{n_1}(T), \ldots,\wt_0(T)+\wt_{n_1}(T)+\cdots+\wt_{n_r}(T)\}$ contains $\Des(S)$; this is equivalent to showing that the monomial $x^{\wt(T)}$ appears in $F_{\Des(S)}^B$. 
If $0\in\Des(S)$, then the northwest-most domino is vertical or primed, hence cannot have the entry $0$ in $T$, so $\wt_0(T)=0$. Now consider a descent $i>0$ of $S$. We need to show that $|T_i|<|T_{i+1}|$. Suppose for a contradiction that $|T_i|=|T_{i+1}|$. If $\dom_i(S)$ is strictly above $\dom_{i+1}(S)$, then $i$ must be in $S$, and $\dom_{i+1}(S)$ must be weakly left of $\dom_i(S)$. Then by definition of the standardization, the entry $T_{i+1}$ (regardless of whether it is primed or unprimed) is standardized before $T_i$, contradicting the fact that $\stand(T)=S$. If $(i+1)'$ is in $S$, then $\dom_{i+1}(S)$ is weakly above and strictly right of $\dom_i(S)$, and it follows that the entry $T_{i+1}$ is standardized before $T_i$ (regardless of whether $T_i$ is primed or unprimed), again contradicting the fact that $\stand(T)=S$. Hence, if $\stand(T)=S$, then $x^{\wt(T)}$ is a term in $F^B_{\Des(S)}$.

Now, fix a term $x^b = x_0^{b_0}x_{n_1}^{b_{n_1}}x_{n_2}^{b_{n_2}}\cdots$ in $F^B_{\Des(S)}$. Let $T$ be obtained by replacing entries $1,2,\ldots , b_0$ in $S$ with $0$, then replacing entries $b_0+1, \ldots , b_0+ b_{n_1}$ with $n_1$, and so on, keeping the primes in the same dominoes as in $S$. By definition, $\wt(T)=x^b$. We need to show that $T$ is the unique element of $\SSShDT(\lambda)$ whose standardization is $S$. Note that since $x^b$ is a term in $F^B_{\Des(S)}$, the descent set of the type-$B$ composition $(b_0,b_{n_1},b_{n_2}, \ldots)$ contains $\Des(S)$. In particular, if $i>0$ is a descent of $S$, then $|T_i|<|T_{i+1}|$, and if $\Des(S)$ contains $0$, then $b_0=0$. 

We first show that $T\in \SSShDT(\lambda)$. By definition, for each $i\ge 0$, every domino containing $n_i$ or $n_i'$ in $T$ appears either strictly right of or strictly below every domino containing $n_{i+1}$ or $n_{i+1}'$, so entries weakly increase along rows and columns. We need to check the conditions on $0$ entries and on primed and unprimed entries with the same value in rows and columns. If $b_0=0$, there are no $0$ entries in $T$ to check. If $b_0>0$, then $0\notin\Des(S)$, so the northwest-most domino is horizontal. If a $0$ was placed in a primed domino in $T$, then the first instance of a $0$ being placed in a primed domino follows a $0$ being placed in an unprimed domino to the left (or above), so these dominoes correspond to a descent in $S$, contradicting the fact that they both receive the value $0$. If a $0$ was placed in a domino strictly below a domino with entry $0$, then there must have been an instance of a $0$ being placed in a domino strictly below the domino in which the previous $0$ was placed, and since both of these dominoes are unprimed, these dominoes correspond to a descent in $S$, contradicting the fact that they both received $0$ entries.  

Now suppose that the dominoes containing $T_k$ and $T_{k+\ell}$ in $T$ violate the conditions defining semistandard shifted domino tableaux, where $|T_k|=|T_{k+\ell}| = n_i$. (Note that the domino containing $T_{k+\ell}$ is strictly right of or strictly below the domino containing $T_k$.) Such a violation means that one of the following must hold: 
\begin{itemize}
\item $T_k=n_i$ and $T_{k+\ell}=n_i'$, and these dominoes are in the same row or column; 
\item $T_k=n_i$ and $T_{k+\ell}=n_i$, and these dominoes are in the same column; 
\item $T_k=n_i'$ and $T_{k+\ell}=n_i'$, and these dominoes are in the same row. 
\end{itemize}   
By \Cref{lem:descentininterval}, there is a descent of $S$ in the interval $[k, k+\ell-1]$, which contradicts the fact that $|T_j|=n_i$ for all $k<j<k+\ell$. 

Next, observe that only $T$ could possibly standardize to $S$. This is because the standardization procedure replaces all entries $i$ in $K\in \SSShDT(\lambda)$ with integers that are smaller than those with which it replaces entries $i+1$. So if $K$ standardizes to $S$, all $0$ entries in $K$ must be in the same dominoes as $1, \ldots , b_0$ in $S$, all $n_1$ entries in $K$ must be in the same dominoes as $b_0+1, \ldots , b_0+b_{n_1}$ in $S$, etc, and the position of the primes is determined by $S$. So $K=T$. 

Finally we must show that indeed $\stand(T)=S$. We must ensure that if $|T_k|=|T_{k+1}|=j$, then the entry $T_k$ is standardized before the entry $T_{k+1}$. Note that $|T_k|=|T_{k+1}|$ implies $k\notin \Des(S)$. Note that $k$ and $(k+1)'$ cannot both be in $S$ since this would force $k$ to be a descent of $S$. If $k'$ and $k+1$ are in $S$, then by definition the standardization procedure encounters $T_k=j'$ before $T_{k+1}=j$. If $k$ and $k+1$ are both in $S$, then since $k\notin \Des(S)$, $\dom_{k+1}(S)$ is strictly right of $\dom_k(S)$. Hence, $T_k=j$ is standardized before $T_{k+1}=j$.  If $k'$ and $(k+1)'$ are both in $S$, then since $k\notin \Des(S)$, $\dom_{k+1}(S)$ is strictly below $\dom_k(S)$. Hence, $T_k=j'$ is standardized before $T_{k+1}=j'$.
\end{proof}

\begin{theorem}\label{thm:peak}
Let $\lambda$ be a partition of $2n$, and fix $Q\in \SShDT(\lambda)$. Then
\[\sum_{\substack{S\in \MSShDT(\lambda) \\ \demark(S)=Q} } F^B_{\Des(S)} = \Delta^B(\Des(Q)).\]
\end{theorem}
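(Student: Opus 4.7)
The plan is to parametrize markings $S$ with $\demark(S) = Q$ by vectors $\epsilon = (\epsilon_1, \dots, \epsilon_m) \in \{0,1\}^m$, where $m$ is the number of dominoes in $Q$ and $\epsilon_i = 1$ iff entry $i$ in $S$ is primed, and then, for each $J \subseteq [0, m-1]$, count the number $N(J)$ of markings with $\Des(S) = J$. Writing $D = \Des(Q)$, $P = \Peak^B(D)$, and $\zeta = \zeta(D)$, the goal is to show that $N(J) = 2^{|P| + \zeta}$ when $P \subseteq J \triangle (J+1)$ and either $\zeta = 0$ or $0 \in J$, and that $N(J) = 0$ otherwise. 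The theorem then follows because $\sum_{S} F^B_{\Des(S)} = \sum_{J} N(J)\, F^B_J$ reproduces $K_{(\zeta, P)} = \Delta^B(D)$ by its very definition.

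First, I would translate the definition of $\Des(S)$ into linear constraints on $\epsilon$. Position $0$ either forces $0 \in J$ (when $0 \in D$) or fixes $\epsilon_1 = [0 \in J]$ (when $0 \notin D$); for $i \in [1, m-1]$, the condition $i \in \Des(S)$ is equivalent to $\epsilon_i = [i \notin J]$ when $i \in D$, and to $\epsilon_{i+1} = [i \in J]$ when $i \notin D$. Consequently, each $\epsilon_j$ receives a constraint from position $j-1$ precisely when $j-1 \notin D$, and a constraint from position $j$ precisely when $j \in D$ with $j \leq m-1$. A direct case analysis then shows that $\epsilon_j$ is unconstrained exactly when $j \in \Val^B(D)$, doubly constrained exactly when $j \in \Peak^B(D)$, and singly constrained otherwise.

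The central step is the consistency check at peaks: for $j \in \Peak^B(D)$, the two constraints $\epsilon_j = [j-1 \in J]$ and $\epsilon_j = [j \notin J]$ agree iff exactly one of $j-1, j$ lies in $J$, which is the condition $j \in J \triangle (J+1)$. Combined with the requirement $\zeta = 1 \Rightarrow 0 \in J$ coming from position $0$, this cuts out precisely the indexing set appearing in the definition of $K_{(\zeta, P)}$. For each valid $J$, the free coordinates of $\epsilon$ are those indexed by $\Val^B(D)$, so $N(J) = 2^{|\Val^B(D)|}$, which equals $2^{|P| + \zeta}$ by \eqref{eq:PeakVal}. The main obstacle is the careful bookkeeping needed to treat the boundary coordinates $\epsilon_1$ (where position $0$ plays a distinguished role, since it cannot constrain an entry ``to the left'') and $\epsilon_m$ (where no constraint from ``the right'' is available), as well as checking that the singly-constrained $\epsilon_j$ really do contribute no further compatibility condition; once this bookkeeping is in place, the identity follows at once.
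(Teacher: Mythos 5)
Your proof is correct and follows essentially the same strategy as the paper's: for a fixed target descent set $J$, one shows the priming of an entry in $\Val^B(\Des(Q))$ is free, the priming of a peak of $\Des(Q)$ is doubly constrained (yielding the solvability condition $\Peak^B(\Des(Q)) \subseteq J \triangle (J+1)$), and every other priming is singly and hence uniquely determined. Your $\{0,1\}$-constraint-system framing is somewhat cleaner than the paper's ``construct one $S$ in $\Gamma_D$, then perturb by priming/unpriming valleys'' argument, and it makes the boundary requirement $\zeta(\Des(Q)) = 1 \Rightarrow 0 \in J$ (needed to land on $K_{(1,P)}$ rather than $K_{(0,P)}$) more explicit than the paper does.
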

\begin{proof}
Recall the notation from \cref{subsec:quasisymmetric}. Let us write $\Peak^B(Q)$ and $\Val^B(Q)$ for the type-$B$ peak set $\Peak^B(\Des(Q))$ and the type-$B$ valley set $\Val^B(\Des(Q))$, respectively. Our goal is to show that
\[\sum_{\substack{S\in \MSShDT(\lambda) \\ \demark(S)=Q}} F^B_{\Des(S)} = \sum_{\substack{D\subseteq [0,n-1] \\ \Peak^B(Q)\subseteq D\triangle (D+1)}}2^{|\Val^B(Q)|}F^B_D.\] 
Note that both sides of this equation have $2^n$ terms. 
We will first show that any $S\in \MSShDT(\lambda)$ such that $\demark(S)=Q$ satisfies $\Peak^B(Q)\subseteq \Des(S)\triangle (\Des(S)+1)$. We will then show that for every $D\subseteq [0, n-1]$, the set $\Gamma_D=\{S\in\MSShDT(\lambda):\demark(S)=Q, \Des(S)=D\}$ has size $2^{|\Val^B(Q)|}$. 

Suppose $S\in \MSShDT(\lambda)$ and $\demark(S)=Q$. Consider $i\in[n-1]$. Recall that $i\in \Peak^B(Q)$ if and only if $i\in \Des(Q)$ and $i-1\notin \Des(Q)$. If $i-1\notin \Des(Q)$, then $i-1\in \Des(S)$ if and only if $i'$ is in $S$. Also, if $i\in \Des(Q)$, then $i\in \Des(S)$ if and only if $i$ is in $S$. Suppose $i\in \Peak^B(Q)$. If $i'$ is in $S$, then $i-1\in \Des(S)$ and $i\notin \Des(S)$, so $i\in \Des(S)\triangle (\Des(S)+1)$. If $i$ is in $S$, then $i-1\notin \Des(S)$ and $i\in \Des(S)$, so again $i\in \Des(S)\triangle (\Des(S)+1)$. This shows that $\Peak^B(Q)\subseteq \Des(S)\triangle (\Des(S)+1)$.

Now suppose $D\subseteq [0,n-1]$ is such that $\Peak^B(Q)\subseteq D\triangle(D+1)$, and define $\Gamma_D$ as above. First, we show $\Gamma_D$ is nonempty. Suppose $\Peak^B(Q) = \{i_1, \ldots , i_r\}$. The condition $\Peak^B(Q)\subseteq D\triangle (D+1)$ implies that for each $1\le j \le r$, precisely one of $i_j -1$ or $i_j$ is in $D$. We construct $S\in \MSShDT(\lambda)$ such that $\demark(S)=Q$ and $\Des(S)=D$ by considering the entries in $Q$ from smallest to largest and choosing either to prime them or leave them unprimed. First, we consider the entries of $Q$ in $[1, i_1]$. We have two cases depending on whether $i_1=1$ or $i_1>1$.

If $i_1=1$, then we have $1\in \Des(Q)$ and $0\notin \Des(Q)$. If $0\in D$, then we prime $1$, so $0\in\Des(S)$. Note that since $i_1=1$ and $0\in D$, we must have $1\notin D$; moreover, $1\notin \Des(S)$ since $1\in \Des(Q)$ and $1$ is primed. If $0\notin D$, then we do not prime $1$. Since $0\notin D$ and $i_1=1$, we must have $1\in D$, and we will also have $1\in \Des(S)$ since $1\in \Des(Q)$ and $1$ is unprimed.

Now assume $i_1>1$. By definition of the peak set of $Q$, any descents of $Q$ that are smaller than $i_1$ form an initial segment in $[0, i_1-2]$. Furthermore, since $0\in \Des(Q)$ implies $\dom_1(Q)$ is vertical, we have $1\notin \Des(Q)$ whenever $0\in \Des(Q)$. Hence, no number smaller than $i_1$ can be a descent of $Q$, except possibly $0$. Now, prime $1$ if both $0\in D$ and $\dom_1(Q)$ is horizontal, and leave $1$ unprimed otherwise. This ensures that $0\in \Des(S)$ if and only if $0\in D$. Next, for all $k\in D\cap [1, i_1-1]$, prime the entry $k+1$ in $Q$, and leave all other entries unprimed. Since there are no descents of $Q$ in the interval $[1, i_1-1]$, this ensures that, for $k\in [1, i_1-1]$, we will have $k\in \Des(S)$ if and only if $k\in D$. Finally, consider $i_1$, which by definition is in $\Des(Q)$. If $i_1-1\in D$, then we have already primed $i_1$, and therefore $i_1$ will not be a descent of $S$, but this is precisely what is required since $i_1-1\in D$ implies $i_1\notin D$. On the other hand, if $i_1-1\notin D$, then we have already chosen $i_1$ to be unprimed, and then since $i_1\in \Des(Q)$, we have $i_1\in \Des(S)$. But again this is precisely what is required since $i_1-1\notin D$ implies $i_1\in D$.

Now consider the entries of $Q$ in the interval $[i_1+1, i_2]$. The descents of $Q$ in this interval form a (possibly empty) initial segment in this interval, say $[i_1+1, s_1]$, where $s_1 \le i_2-2$. Leave unprimed the elements of $[i_1+1, \ldots s_1]$ that are in $D$, and prime the elements of $[i_1+1, \ldots s_1]$ that are not in $D$. The descents of $S$ in this interval will then be precisely the elements of $D$ in this interval. The entries in $[s_1+1 \ldots i_2-1]$ are not descents of $Q$. For each $k$ in this interval, prime $k+1$ if $k\in D$, and leave $k+1$ unprimed if $k\not\in D$. Then the descents of $S$ in this interval are precisely the elements of $D$ in this interval. Continue in this manner with the elements in the interval $[i_2+1, i_3]$ and all subsequent intervals: the resulting $\MSShDT$ $S$ satisfies $\demark(S)=Q$ and $\Des(S)=D$. 

Let $S^*\in\Gamma_D$. We claim that $\Gamma_D$ is equal to the set of tableau that can be obtained from $S^*$ by priming or unpriming some of the entries in $\Val^B(Q)$. Suppose $m\in\Val^B(Q)$. Then $m\notin \Des(Q)$, so whether $m$ is a descent in $S^*$ is entirely controlled by whether $m+1$ is primed or not, regardless of whether $m$ is primed. Similarly, since $m-1\in \Des(Q)$, whether $m-1$ is a descent in $S^*$ is entirely controlled by whether $m-1$ is primed or not, regardless of whether $m$ is primed. Thus, priming or unpriming $m$ does not change the descent set. On the other hand, every integer $p\in[n]\setminus\Val^B(Q)$ is such that either $p\in \Des(Q)$ (in which case priming or unpriming $p$ changes whether $p+1$ is in the descent set of the tableau), or $p-1\notin\Des(Q)$ (in which case priming or unpriming $p$ changes whether $p-1$ is in the descent set of the tableau). 
\end{proof}

From \cref{thm:stand,thm:peak} we conclude the following.

\begin{corollary}\label{cor:H_lambda}
The shifted domino function $H_\lambda$ satisfies
\[H_\lambda = \sum_{Q\in \SShDT(\lambda)}\Delta^B(\Des(Q)).\]
\end{corollary}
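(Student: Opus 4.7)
The plan is to prove the identity by partitioning the summation defining $H_\lambda$ in two stages, matching precisely the two theorems just established. The first stage groups semistandard shifted domino tableaux according to their standardization in $\MSShDT(\lambda)$, and the second stage groups marked standard shifted domino tableaux according to their demarking in $\SShDT(\lambda)$.

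More concretely, I would start from the definition
\[H_\lambda = \sum_{T\in\SSShDT(\lambda)} x^{\wt(T)}.\]
The standardization map $\stand\colon \SSShDT(\lambda)\to\MSShDT(\lambda)$ is well defined (this is built into the construction preceding \cref{thm:stand}), so we may rewrite the sum as
\[H_\lambda = \sum_{S\in\MSShDT(\lambda)} \;\sum_{\substack{T\in\SSShDT(\lambda)\\ \stand(T)=S}} x^{\wt(T)} = \sum_{S\in\MSShDT(\lambda)} F^B_{\Des(S)},\]
where the inner sums are evaluated using \cref{thm:stand}. Next, the demarking map $\demark\colon\MSShDT(\lambda)\to\SShDT(\lambda)$ is likewise well defined, so grouping by the fiber over each $Q\in\SShDT(\lambda)$ yields
\[H_\lambda = \sum_{Q\in\SShDT(\lambda)} \;\sum_{\substack{S\in\MSShDT(\lambda)\\ \demark(S)=Q}} F^B_{\Des(S)} = \sum_{Q\in\SShDT(\lambda)}\Delta^B(\Des(Q)),\]
where the final equality applies \cref{thm:peak}.

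Since both theorems have already been proven, no genuine obstacle remains; the corollary is essentially a formal consequence obtained by iterated application of the partition-into-fibers trick. The only things worth double-checking are that $\stand$ sends $\SSShDT(\lambda)$ into $\MSShDT(\lambda)$ (so that in particular $\stand(T)$ has the same shape as $T$) and that every marked standard shifted domino tableau arises as the standardization of at least one semistandard one, both of which are immediate from the construction. No additional lemmas or case analyses are required.
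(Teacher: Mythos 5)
Your proof is correct and follows exactly the route the paper intends: the paper's own ``proof'' is just the single sentence that the corollary follows from Theorems~\ref{thm:stand} and~\ref{thm:peak}, and you have filled in the two fiberwise regroupings (first over $\stand$, then over $\demark$) that make this formal. Your side remark about surjectivity of $\stand$ is not even needed, since Theorem~\ref{thm:stand} already forces every fiber to be nonempty (its right-hand side $F^B_{\Des(S)}$ is nonzero).
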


\section{0-Hecke--Clifford Algebras in Type~$B$}\label{sec:0HeckeClifford}

The \dfn{Clifford algebra} $Cl_n$ is the $\mathbb C$-algebra with generators $c_1,\ldots, c_n$ subject to the relations ${c_j^2 = -1}$ and $c_ic_j=-c_jc_i$ for $i\neq j$. The \dfn{$0$-Hecke--Clifford algebra} $HCl_n(0)$ is the $\mathbb C$-algebra generated by the (type-$A$) $0$-Hecke algebra $H_n(0)$ and the Clifford algebra $Cl_n$ such that the generators $\pi_1,\ldots,\pi_{n-1},c_1,\ldots,c_n$ satisfy the additional relations
\begin{align*}
\pi_ic_j & = c_j\pi_i \,\,\,\, \mbox{ for } j\neq i, i+1;  \\
\pi_ic_i & =  c_{i+1}\pi_i; \\
(\pi_i+1)c_{i+1} & = c_i(\pi_i+1).
\end{align*}

We now introduce the \dfn{type-$B$ 0-Hecke--Clifford algebra} $HCl_n^B(0)$ as the algebra generated by the $0$-Hecke algebra $H_n^B(0)$ and the Clifford algebra $Cl_n$ such that the generators $\pi_0,\ldots,\pi_{n-1},c_1,\ldots,c_n$ satisfy the additional relations
\begin{align*}
\pi_ic_j & = c_j\pi_i \hspace{1.52cm} \mbox{ for } j\neq i, i+1;  \\
\pi_ic_{i+1} & =  c_{i}\pi_i \hspace{1.55cm} \mbox{ for } 1\le i \le n-1; \\
(\pi_i+1)c_i & = c_{i+1}(\pi_i+1) \hspace{.2cm} \mbox{ for } 1\le i \le n-1; \\
\pi_0c_1 & = \sqrt{-1}\pi_0.
\end{align*}
Note the differences between these relations and the relations defining the $0$-Hecke--Clifford algebra in type~$A$. These differences end up being necessary in order to make the algebra and combinatorics work out correctly. 

We now study the structure of the $HCl_n^B(0)$-modules induced from the simple $H_n^B(0)$-modules. We give a tableau description of the basis elements of these modules and show that the type-$B$ quasisymmetric characteristics of the restrictions of these modules to $H_n^B(0)$ are the type-$B$ peak functions, thus giving a representation-theoretic interpretation of such functions. Moreover, we show that the isomorphism classes of these induced modules are indexed by the type-$B$ peak sets. We proceed in a manner similar to \cite[Sections 5.2 and 5.3]{BHT}, where analogous results are obtained in type $A$.

For each subset $D=\{d_1<\cdots<d_r\}\subseteq[n]$, we write $c_D$ for the element $c_{d_1}\cdots c_{d_r}$ (with indices in increasing order). For each $I\subseteq[0,n-1]$, let us fix a nonzero vector 
$\varepsilon_I$ in the simple $H_n^B(0)$-module $\mathcal S_I^B$ so that $\mathcal S_I^B=\mathbb C\{\varepsilon_I\}$. Recall that the module structure is defined by \[\pi_j\cdot\varepsilon_I = \begin{cases} -\varepsilon_I & \mbox{ if } j\in I \\
                                       0 & \mbox{ if } j\notin I.
\end{cases}\]
Let \[\mathcal M_I=\text{Ind}_{H_n^B(0)}^{HCl_n^B(0)}(\mathcal S_I^B)\] be the $HCl_n^B(0)$-module induced by $\mathcal S_I^B$. The set $\{c_D\varepsilon_I:D\subseteq[n]\}$ is a linear basis for $\mathcal M_I$. 

It will be convenient to represent a subset $I\subseteq[0,n-1]$ by a ribbon shape with $n+1$ boxes filled with $0,1,\ldots,n$ such that box filled with $i+1$ is immediately below the box filled with $i$ if $i\in I$ and is immediately left of the box filled with $i$ if $i\not\in I$. We can then represent the element $c_D\varepsilon_I$ of $\mathcal M_I$ by replacing each entry $d\in D$ with $\overline d$. For example, if $n=7$, then \[\ytableausetup{centertableaux, boxsize=2em}
\begin{ytableau}
0 \\
\overline 1 & \overline 2 & 3\\
\none & \none & 4\\ 
\none & \none & \overline 5  & 6\\ 
\none & \none & \none & 7
\end{ytableau}\] represents $c_{\{1,2,5\}}\varepsilon_{\{0,3,4,6\}}$. 

To describe the module structure of $\mathcal M_I$, we need to see how each generator $\pi_i$ acts on each basis element $c_D\varepsilon_I$. Representing this basis element via a filled ribbon as above, we find that $\pi_i\cdot c_D\varepsilon_I$ only depends on the boxes filled with $i$ and $i+1$; hence, we can describe the action by drawing these two boxes and ignoring all others. For $1\leq i\leq n-1$, we have \begin{alignat*}{3}
\ytableausetup{centertableaux, boxsize=2em}
&\pi_i\cdot\begin{ytableau}
{\scriptstyle i} & {\scriptstyle i+1}
\end{ytableau}= 0; \qquad\qquad\qquad\qquad 
&&\pi_i\cdot\begin{ytableau}
{\scriptstyle \overline i} & {\scriptstyle i+1}
\end{ytableau}= -\,
\begin{ytableau}
{\scriptstyle \overline i} & {\scriptstyle i+1}
\end{ytableau}+\begin{ytableau}
{\scriptstyle i} & {\scriptstyle \overline{i+1}}
\end{ytableau}\,; \\ 
&\pi_i\cdot\begin{ytableau}
{\scriptstyle i} & {\scriptstyle \overline{i+1}}
\end{ytableau}= 0; \qquad\qquad\qquad\qquad 
&&\pi_i\cdot\begin{ytableau}
{\scriptstyle \overline i} & {\scriptstyle \overline{i+1}}
\end{ytableau}= -\,
\begin{ytableau}
{\scriptstyle \overline i} & {\scriptstyle \overline{i+1}}
\end{ytableau}-\begin{ytableau}
{\scriptstyle i} & {\scriptstyle i+1}
\end{ytableau}\,; \\ 
&\pi_i\cdot\begin{ytableau}
{\scriptstyle i} \\ {\scriptstyle \overline{i+1}}
\end{ytableau}= -\,\begin{ytableau}
{\scriptstyle \overline i} \\ {\scriptstyle i+1}
\end{ytableau}\,; \qquad\qquad\qquad\qquad 
&&\pi_i\cdot\begin{ytableau}
{\scriptstyle i} \\ {\scriptstyle i+1}
\end{ytableau}= -\,\begin{ytableau}
{\scriptstyle i} \\ {\scriptstyle i+1}
\end{ytableau}\,; \\ 
&\pi_i\cdot\begin{ytableau}
{\scriptstyle \overline i} \\ {\scriptstyle \overline{i+1}}
\end{ytableau}= -\,\begin{ytableau}
{\scriptstyle i} \\ {\scriptstyle i+1}
\end{ytableau}\,; \qquad\qquad\qquad\qquad 
&&\pi_i\cdot\begin{ytableau}
{\scriptstyle \overline i} \\ {\scriptstyle i+1}
\end{ytableau}= -\,\begin{ytableau}
{\scriptstyle \overline i} \\ {\scriptstyle i+1}
\end{ytableau}\,.
\end{alignat*}
Moreover, we have 
\begin{align*}
\ytableausetup{centertableaux, boxsize=2em}
&\pi_0\cdot\begin{ytableau}
0 & 1
\end{ytableau}= 0;  \hspace{10.2cm}\\ 
&\pi_0\cdot\begin{ytableau}
0 & \overline 1
\end{ytableau}= 0; \\ 
&\pi_0\cdot\begin{ytableau}
0 \\ \overline{1}
\end{ytableau}= -\sqrt{-1}\,\begin{ytableau}
0 \\ 1
\end{ytableau}\,; \hspace{2cm}
\pi_0\cdot\begin{ytableau}
0 \\ 1
\end{ytableau}= -\,\begin{ytableau}
0 \\ 1
\end{ytableau}\,.
\end{align*}

For a fixed $I\subseteq[0,n-1]$, we define a partial order $\leq_I$ on the basis $\{c_D\varepsilon_I:D\subseteq[n]\}$ via the covering relations of the form \[\,\begin{ytableau}
\scriptstyle i & \scriptstyle \overline{i+1}
\end{ytableau}\,\lessdot_I\begin{ytableau}
\scriptstyle \overline i & \scriptstyle i+1
\end{ytableau}\,,\qquad \,\begin{ytableau}
\scriptstyle i & \scriptstyle i+1
\end{ytableau}\,\lessdot_I\begin{ytableau}
\scriptstyle \overline i & \scriptstyle \overline{i+1}
\end{ytableau}\,,\] \[ \,\begin{ytableau}
\scriptstyle \overline i \\ \scriptstyle i+1
\end{ytableau}\,\lessdot_I\begin{ytableau}
\scriptstyle i \\ \scriptstyle \overline{i+1}
\end{ytableau}\,, \qquad  \,\begin{ytableau}
\scriptstyle i \\ \scriptstyle i+1
\end{ytableau}\,\lessdot_I\begin{ytableau}
\scriptstyle \overline i \\ \scriptstyle \overline{i+1}
\end{ytableau}\,\]
for $1\leq i\leq n-1$ and the additional cover relation \[\,\begin{ytableau}
0 \\ 1
\end{ytableau}\,\lessdot_I\begin{ytableau}
0 \\ \overline 1
\end{ytableau}\,.\] In this notation, each cover relation occurs when the ribbon diagrams agree except in the displayed boxes. For example, the first of these cover relations should be interpreted as saying $c_D\varepsilon_I\lessdot_I c_{D'}\varepsilon_I$ whenever $i\notin I$, $i\in D$, $i+1\not\in D$, and $D'=(D\setminus\{i\})\cup\{i+1\}$. The final additional cover relation says that $c_D\varepsilon_I\lessdot_I c_{D'}\varepsilon_I$ whenever $0\in I$, $1\in D$, and $D'=D\setminus\{1\}$. 

The following lemma is immediate from the above discussion. 

\begin{lemma}\label{lem:triangular}
Fix $I\subseteq[0,n-1]$. For each $0\leq i\leq n-1$, the action of $\pi_i$ on the basis $\{c_D\varepsilon_I:D\subseteq[n]\}$ of $\mathcal M_I$ is triangular with respect to the partial order $\leq_I$. More precisely, for each $D\subseteq[n]$, we have \[\pi_i\cdot c_D\varepsilon_I=\alpha(i,I,D)c_D\varepsilon_I+\sum_{c_{D'}\varepsilon_I<_I c_D\varepsilon_I}\beta(i,I,D')c_{D'}\varepsilon_I\] for some $\alpha(i,I,D)\in\{0,-1\}$ and some $\beta(i,I,D')\in\mathbb C$. 
\end{lemma}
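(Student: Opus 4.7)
The plan is to prove the lemma by a direct case-by-case verification using the explicit formulas for the action of the generators $\pi_i$ on the ribbon-diagram basis that were tabulated immediately before the lemma statement. Since the action of $\pi_i$ on $c_D\varepsilon_I$ depends only on the two boxes labelled $i$ and $i+1$ in the ribbon diagram, it suffices to inspect each possible local configuration and check that the output is a linear combination of (i) the input itself with coefficient in $\{0,-1\}$ and (ii) basis elements that are strictly less than the input under $\leq_I$.

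For $1\le i\le n-1$, I would go through the eight possible local configurations of the $i$ and $i+1$ boxes (two possible relative positions -- same row or same column -- times the four possibilities for which of $i,i+1$ are barred). In the four row-configurations, the tabulated formulas give either $0$, or $-c_D\varepsilon_I$ plus a single additional term whose local configuration matches the bottom of one of the row cover relations $\begin{ytableau}\scriptstyle i & \scriptstyle \overline{i+1}\end{ytableau}\lessdot_I\begin{ytableau}\scriptstyle \overline i & \scriptstyle i+1\end{ytableau}$ or $\begin{ytableau}\scriptstyle i & \scriptstyle i+1\end{ytableau}\lessdot_I\begin{ytableau}\scriptstyle \overline i & \scriptstyle \overline{i+1}\end{ytableau}$; in each instance the coefficient $\alpha(i,I,D)\in\{0,-1\}$ is read off directly, and the additional term is visibly a strictly smaller basis element, yielding the required $\beta(i,I,D')$. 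In the four column-configurations, the tabulated formulas yield either $-c_D\varepsilon_I$ (for the two ``increasing'' configurations $\begin{ytableau}\scriptstyle i \\ \scriptstyle i+1\end{ytableau}$ and $\begin{ytableau}\scriptstyle \overline i \\ \scriptstyle i+1\end{ytableau}$, both already descent positions) or a single term that matches the top of the column cover relation $\begin{ytableau}\scriptstyle \overline i \\ \scriptstyle i+1\end{ytableau}\lessdot_I\begin{ytableau}\scriptstyle i \\ \scriptstyle \overline{i+1}\end{ytableau}$ or $\begin{ytableau}\scriptstyle i \\ \scriptstyle i+1\end{ytableau}\lessdot_I\begin{ytableau}\scriptstyle \overline i \\ \scriptstyle \overline{i+1}\end{ytableau}$, so again the output decomposes as required.

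The $\pi_0$ case is handled analogously by inspecting the four possible local configurations of the $0$ and $1$ boxes. Three of the four tabulated formulas already give either $0$ or $-c_D\varepsilon_I$, and the only nontrivial off-diagonal case is $\pi_0\cdot c_{\{1\}\cup D''}\varepsilon_I=-\sqrt{-1}\,c_{D''}\varepsilon_I$ when $0\in I$; this matches precisely the additional cover relation $\begin{ytableau}0\\1\end{ytableau}\lessdot_I\begin{ytableau}0\\\overline 1\end{ytableau}$ defining $\leq_I$, so the output is a scalar multiple of a strictly smaller basis element with $\alpha(0,I,D)=0$.

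There is no real obstacle here: once the cover relations defining $\leq_I$ have been set up to mirror the nondiagonal transitions produced by the action formulas (and indeed they were defined in exactly that way), the lemma reduces to reading off the tabulated action formulas. The only mild bookkeeping point is to confirm that the unique diagonal coefficient appearing in each formula lies in $\{0,-1\}$, which is visible in every case.
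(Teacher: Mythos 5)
Your proposal is correct and follows exactly the route the paper takes: the paper precedes the lemma with the tabulated local action formulas for $\pi_0,\ldots,\pi_{n-1}$ and the cover relations of $\leq_I$ (chosen precisely to match the off-diagonal outputs), and then declares the lemma ``immediate from the above discussion.'' Your case-by-case reading of the tables—checking that each diagonal coefficient is $0$ or $-1$ and that each off-diagonal term appears on the smaller side of a cover relation—is just the explicit verification the paper leaves to the reader.
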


Upon inspection, we find that the eigenvalue $\alpha(i,I,D)$ is equal to $-1$ if and only if one of the following conditions holds: 
\begin{itemize}
\item $i\notin I$ and $i\in D$; 
\item $i\in I$ and $i+1\notin D$.
\end{itemize}
This implies that if $v$ is in the type-$B$ valley set $\Val^B([0,n-1]\setminus I)=([n]\cap I)\setminus(I+1)$, then none of the eigenvalues $\alpha(i,I,D)$ depend on whether or not $v$ belongs to $D$. In other words, we have the following lemma. 
\begin{lemma}\label{lem:valleys}
If $v\in\Val^B([0,n-1]\setminus I)$, then \[\alpha(i,I,D)=\alpha(i,I,D\cup\{v\})\] for every $i\in[0,n-1]$ and every $D\subseteq [n]$. 
\end{lemma}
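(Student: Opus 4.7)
The plan is to read off the conclusion directly from the explicit characterization of $\alpha(i,I,D)$ stated in the paragraph immediately preceding the lemma: $\alpha(i,I,D)=-1$ if and only if either $i\notin I$ and $i\in D$, or else $i\in I$ and $i+1\notin D$; otherwise $\alpha(i,I,D)=0$. Since $D$ and $D\cup\{v\}$ differ at most in the single element $v$, the equality $\alpha(i,I,D)=\alpha(i,I,D\cup\{v\})$ is automatic unless one of the two Boolean tests ``$i\in D$'' or ``$i+1\in D$'' actually involves $v$. So the only nontrivial cases to check are $i=v$ and $i=v-1$, and both are handled by invoking the valley-set hypothesis.

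Suppose first that $i=v$. Since $i\in[0,n-1]$, we have $v\in[0,n-1]$. The valley condition $v\in\Val^B([0,n-1]\setminus I)$ says $v\notin[0,n-1]\setminus I$ and $v-1\in[0,n-1]\setminus I$; for $v\in[0,n-1]$ this forces $v\in I$. Hence we are in the branch $i\in I$, so $\alpha(i,I,D)=-1$ iff $v+1\notin D$. Since $v+1\neq v$, we have $v+1\notin D$ iff $v+1\notin D\cup\{v\}$, so $\alpha(i,I,D)=\alpha(i,I,D\cup\{v\})$.

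Suppose instead that $i=v-1$. The valley condition gives $v-1\in[0,n-1]\setminus I$, so in particular $i=v-1\notin I$. Thus $\alpha(i,I,D)=-1$ iff $i=v-1\in D$. Since $v-1\neq v$, the truth value of $v-1\in D$ is unaffected by adjoining $v$, and again $\alpha(i,I,D)=\alpha(i,I,D\cup\{v\})$. Finally, if $i\notin\{v-1,v\}$, then neither of the tests ``$i\in D$'' nor ``$i+1\in D$'' references $v$, so the equality is immediate.

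There is no real obstacle here: the lemma is essentially a bookkeeping consequence of the explicit description of the diagonal coefficients, with the valley hypothesis exactly ruling out the one scenario (namely $v\notin I$ and $i=v$, which would make $\alpha$ sensitive to whether $v\in D$) that could cause trouble. The only minor thing to check is that the argument behaves correctly in the boundary case $i=0$, and it does: Case A with $i=0$ would require $v=0$, which is impossible since $v\in[n]$, and Case B with $i=0$ requires $v=1$, which forces $0\notin I$, consistently with the characterization.
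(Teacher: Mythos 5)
Your proof is correct and follows exactly the approach the paper takes (which in fact offers no detailed argument, merely stating that the characterization of $\alpha$ ``implies'' the lemma): you read off the claim directly from the description of when $\alpha(i,I,D)=-1$ and verify case by case that the only indices $i$ for which adjoining $v$ could matter, namely $i=v$ and $i=v-1$, are rendered harmless by the valley hypothesis. The only tiny blemish is in your closing summary, where you say the valley hypothesis rules out ``the one scenario'' $i=v$, $v\notin I$ --- it actually rules out two scenarios, the other being $i=v-1$, $v-1\in I$ (which would make $\alpha$ test $v\notin D$) --- but your actual case analysis handles both correctly, so this is a rhetorical slip rather than a gap.
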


Fix a linear extension $c_{D_1}\varepsilon_I,\ldots,c_{D_{2^n}}\varepsilon_I$ of the partial order $\leq_I$. For each $0\leq k\leq 2^n$, it follows from \Cref{lem:triangular} that the vector space $\mathcal M_I^{(k)}=\mathbb C\{c_{D_1}\varepsilon_I,\ldots,c_{D_k}\varepsilon_I\}$ is an $H_n^B(0)$-module. Thus, \[\{0\}=\mathcal M_I^{(0)}\subseteq\mathcal M_I^{(1)}\subseteq\cdots\subseteq\mathcal M_I^{(2^n)}=\Res_{H_n^B(0)}(\mathcal M_I)\] is a composition series for the $H_n^B(0)$-module obtained by restricting $\mathcal M_I$. 

For each $1\leq k\leq 2^n$, the quotient $\mathcal M_I^{(k)}/\mathcal M_I^{(k-1)}$ is a simple $H_n^B(0)$-module, so there is a unique subset $K(k,I)\subseteq [0,n-1]$ such that $\mathcal M_I^{(k)}/\mathcal M_I^{(k-1)}$ is isomorphic to $\mathcal S_{K(k,I)}^B=\mathbb C\{\varepsilon_{K(k,I)}\}$. According to \Cref{lem:triangular}, each generator $\pi_i$ acts on $\mathcal M_I^{(k)}/\mathcal M_I^{(k-1)}$ via multiplication by $\alpha(i,I,D_k)$. Therefore, it follows from the explicit description of the module structure of $\mathcal S_{K(k,I)}^B$ that 
\begin{equation}\label{eq:K(k,I)}
K(k,I)=\{i\in[0,n-1]:\alpha(i,I,D_k)=-1\}=(D_k\setminus I)\cup(I\setminus (D_k-1)).
\end{equation}

The next theorem allows us to interpret Petersen's type-$B$ peak functions as the type-$B$ quasisymmetric characteristics of the restrictions of the modules $\mathcal M_I$, thereby providing a type-$B$ analogue of \cite[Theorem~5.3]{BHT}. We remark that while the paper \cite{BHT} works directly with a set $I\in[n-1]$, we must take the complement of our set $I$ in $[0,n-1]$ in order to deal with the slight differences between the relations defining $HCl_n^B(0)$ and those defining $HCl_n(0)$. 
\begin{theorem}\label{thm:ch^B(Res)}
For each $I\subseteq[0,n-1]$, we have \[\ch^B(\Res_{H_n^B(0)}(\mathcal M_I))=\Delta^B([0,n-1]\setminus I).\]
\end{theorem}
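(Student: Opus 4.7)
The plan is to compute $\ch^B(\Res_{H_n^B(0)}(\mathcal M_I))$ using the composition series built up immediately before the theorem, and then reindex the resulting sum to match the defining expression for $\Delta^B(I')$, where $I':=[0,n-1]\setminus I$.

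By \Cref{lem:triangular} and the composition series $\{0\}=\mathcal M_I^{(0)}\subseteq\cdots\subseteq\mathcal M_I^{(2^n)}=\Res_{H_n^B(0)}(\mathcal M_I)$ constructed in its wake, together with \eqref{eq:K(k,I)}, the characteristic reads
\[\ch^B(\Res_{H_n^B(0)}(\mathcal M_I))=\sum_{D\subseteq[n]}F^B_{K(D,I)},\qquad K(D,I)=(D\setminus I)\cup(I\setminus(D-1)).\]
Invoking \Cref{lem:valleys}, toggling whether an element $v\in\Val^B(I')$ lies in $D$ does not alter $K(D,I)$, so grouping the $2^n$ subsets of $[n]$ by their $\Val^B(I')$-cosets factors out $2^{|\Val^B(I')|}$ and collapses the sum to one over $D\subseteq[n]\setminus\Val^B(I')$. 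By \eqref{eq:PeakVal}, this prefactor equals $2^{|\Peak^B(I')|+\zeta(I')}$, which is exactly the numerical coefficient of $K_{(\zeta(I'),\Peak^B(I'))}=\Delta^B(I')$.

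The remaining step---and the main obstacle---is to show that $D\mapsto K(D,I)$ restricts to a bijection from the subsets $D\subseteq[n]\setminus\Val^B(I')$ onto the index set of the defining sum of $\Delta^B(I')$. Injectivity follows at once from \eqref{eq:K(k,I)}: the restriction of $K(D,I)$ to $I'\cap[n]$ recovers $D\cap I'$ (since ``$i\in K(D,I)\iff i\in D$'' for such $i$), while its restriction to $I$ recovers $D\cap(I+1)$ (since ``$i\in K(D,I)\iff i+1\notin D$'' for $i\in I$), and a short check shows $(I'\cap[n])\cup(I+1)=[n]\setminus\Val^B(I')$. For the image, a direct check shows that whenever $p\in\Peak^B(I')$---so $p\notin I$ and $p-1\in I$---exactly one of $p-1,p$ lies in $K(D,I)$, forcing $\Peak^B(I')\subseteq K(D,I)\triangle(K(D,I)+1)$; the status of $0\in K(D,I)$ is controlled entirely by whether $0\in I$, matching the $\zeta$-condition in $\Delta^B(I')$. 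A short case split on whether $0\in I$ is needed to align the peak and $\zeta$ conditions with the explicit formula for $K(D,I)$, and a cardinality count via \eqref{eq:PeakVal} then forces the bijection to be surjective.
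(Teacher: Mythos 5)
Your proposal is correct and follows essentially the same route as the paper: compute the characteristic via the composition series and eigenvalue formula \eqref{eq:K(k,I)}, use \Cref{lem:valleys} to factor out $2^{|\Val^B(I')|}$, verify the peak and $\zeta$-conditions, and conclude by counting. Your explicit injectivity argument (recovering $D\cap([n]\setminus\Val^B(I'))$ from the restrictions of $K(D,I)$ to $I'\cap[n]$ and to $I$) is a welcome sharpening of the paper's terser assertion that fibers over each valid $K$ are in bijection with subsets of $\Val^B(I')$; the only thing left slightly implicit in your final step is the complementation $K'=[0,n-1]\setminus K$, which the paper performs explicitly to pass between the condition $0\notin I\implies 0\notin K$ and the condition $0\in I'\implies 0\in K'$ appearing in the defining sum for $K_{(1,\Peak^B(I'))}$.
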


\begin{proof}
Consider $1\leq k\leq 2^n$. Suppose $p\in\Peak^B([0,n-1]\setminus I)$; that is, $p-1\in I$ and $p\notin I$. If ${p\in K(k,I)}$, then it follows from \eqref{eq:K(k,I)} that $p\in D_k$, so $p-1\not\in K(k,I)$. On the other hand, if ${p\notin K(k,I)}$, then $p\not\in D_k$, so $p-1\in K(k,I)$. This shows that \[\Peak^B([0,n-1]\setminus I)\subseteq K(k,I)\triangle(K(k,I)+1).\] Also, note that if $0\notin I$, then $0\notin K(k,I)$ since $D_k\subseteq[n]$. Given a set $K\subseteq[0,n-1]$ such that $\Peak^B([0,n-1]\setminus I)\subseteq K\triangle(K+1)$ and $0\notin I\implies 0\notin K$, it follows from \Cref{lem:valleys} that the indices $k$ satisfying $K(k,I)=K$ are in bijection with subsets of $\Val^B([0,n-1]\setminus I)$. Hence, 
\begin{align*}
\ch^B(\Res_{H_n^B(0)}(\mathcal M_I))&=\sum_{k=1}^{2^n}\ch^B(\mathcal M_I^{(k)}/\mathcal M_I^{(k-1)}) \\ 
&=\sum_{k=1}^{2^n}\ch^B(\mathcal S_{K(k,I)}^B) \\ &=\sum_{k=1}^{2^n}F^B_{K(k,I)} \\ &=2^{|\Val^B([0,n-1]\setminus I)|}\sum_{\substack{\Peak^B([0,n-1]\setminus I)\subseteq K\triangle(K+1) \\ 0\notin I\implies 0\notin K}}F_K^B \\
&=2^{|\Peak^B([0,n-1]\setminus I)|+\zeta([0,n-1]\setminus I)}\sum_{\substack{\Peak^B([0,n-1]\setminus I)\subseteq K'\triangle(K'+1) \\ 0\in [0,n-1]\setminus I\implies 0\in K'}}F_{K'}^B,
\end{align*}
where we obtained the last equality by using the identity \eqref{eq:PeakVal} and substituting ${K'=[0,n-1]\setminus K}$. The desired result now follows immediately from the definition of $\Delta^B([0,n-1]\setminus I)$. 
\end{proof}

The proof of the following result is essentially the same as that of \cite[Theorem~5.4]{BHT}, but we include it anyway for the sake of completeness. 

\begin{theorem}
Suppose $I\subseteq[0,n-1]$, and let $V=\Val^B([0,n-1]\setminus I)$. Let $Cl_V$ be the subalgebra of $Cl_n$ generated by $\{c_v:v\in V\}$. For $c\in Cl_n$, define $f_c\colon\mathcal M_I\to\mathcal M_I$ by $f_c(x\varepsilon_I)=xc\varepsilon_I$ for all $x\in Cl_n$. We obtain a right action of $Cl_V$ on $\mathcal M_I$ by letting $(x\varepsilon_I)\cdot c=f_c(x\varepsilon_I)$ (and extending by linearity), and this right action commutes with the left action of $HCl_n^B(0)$. Moreover, the map $c\mapsto f_c$ is an isomorphism from $Cl_V$ to $\End_{HCl_n^B(0)}(\mathcal M_I)$. 
\end{theorem}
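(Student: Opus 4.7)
The plan is to mimic the type-$A$ argument of \cite{BHT} while accounting for the type-$B$ $0$-Hecke--Clifford relations. The proof splits into a well-definedness/injectivity step followed by a dimension count.

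First, realize $\mathcal{M}_I$ as the cyclic left module $HCl_n^B(0)/J_I$, where $J_I$ is the left ideal generated by $\{\pi_i : i \notin I\} \cup \{\pi_i + 1 : i \in I\}$. Right multiplication by $c \in Cl_V$ descends to a well-defined endomorphism of $\mathcal{M}_I$ if and only if $J_I \cdot c \subseteq J_I$, and since $Cl_V$ is generated by $\{c_v : v \in V\}$, it suffices to check $\pi_i c_v \in J_I$ whenever $i \notin I$ and $(\pi_i + 1) c_v \in J_I$ whenever $i \in I$, for every $v \in V$. This reduces to a short case analysis on $(i,v)$ using the defining relations of $HCl_n^B(0)$: when $v \neq i, i+1$ (and $(i,v) \neq (0,1)$) the generators commute; when $v = i+1$ we use $\pi_i c_{i+1} = c_i \pi_i$, noting that $v - 1 = i \notin I$ by the definition of $V$, so the relevant case is $i \notin I$; when $v = i$ we use $(\pi_i + 1) c_i = c_{i+1}(\pi_i + 1)$, where $v = i \in I$ because $V \cap [1, n-1] \subseteq I$; and when $(i, v) = (0, 1)$ the relation $\pi_0 c_1 = \sqrt{-1}\,\pi_0$ applies, with $0 \notin I$ forced by $v - 1 = 0 \notin I$. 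In each case the right-hand side lies in $J_I$. Commutativity of the right $Cl_V$-action with the left $HCl_n^B(0)$-action is then automatic from associativity in $HCl_n^B(0)$.

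Next, $c \mapsto f_c$ is an algebra anti-homomorphism, since $f_{c_1 c_2}(x\varepsilon_I) = xc_1c_2\varepsilon_I = f_{c_2}(f_{c_1}(x\varepsilon_I))$, and it is injective: if $f_c = 0$ then $c\varepsilon_I = 0$, and writing $c = \sum_{D \subseteq V} a_D c_D$ in the standard basis of $Cl_V$, the linear independence of $\{c_D\varepsilon_I : D \subseteq [n]\}$ in $\mathcal{M}_I$ forces all $a_D = 0$.

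For surjectivity, I would compute $\dim \End_{HCl_n^B(0)}(\mathcal{M}_I)$. An endomorphism $\phi$ is determined by $m := \phi(\varepsilon_I)$, and such a $\phi$ exists if and only if $m$ lies in the joint eigenspace
\[
E_I = \{ m \in \mathcal{M}_I : \pi_i m = \alpha_i m \text{ for all } i \in [0,n-1]\},
\]
where $\alpha_i = -1$ if $i \in I$ and $\alpha_i = 0$ otherwise. The first step shows that $c_D \varepsilon_I \in E_I$ for every $D \subseteq V$, giving $\dim E_I \geq 2^{|V|}$. For the reverse inequality, fix a linear extension of $\leq_I$ and suppose $m = \sum_k a_k c_{D_k} \varepsilon_I \in E_I$ is nonzero with largest nonvanishing index $k^*$. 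By \cref{lem:triangular}, the coefficient of $c_{D_{k^*}}\varepsilon_I$ in $\pi_i m$ is $a_{k^*} \alpha(i, I, D_{k^*})$, and comparing with $\pi_i m = \alpha_i m$ forces $\alpha(i, I, D_{k^*}) = \alpha_i$ for every $i$, equivalent via \eqref{eq:K(k,I)} to $D_{k^*} \subseteq V$. Since $c_{D_{k^*}}\varepsilon_I$ is itself in $E_I$, subtracting $a_{k^*} c_{D_{k^*}}\varepsilon_I$ and iterating shows that $E_I$ is spanned by $\{c_D \varepsilon_I : D \subseteq V\}$, so $\dim E_I = 2^{|V|} = \dim Cl_V$. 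Together with injectivity, this proves that $c \mapsto f_c$ is a bijection. The main obstacle is the case analysis of the first step, where one must verify that the cases in which the $HCl_n^B(0)$-relations move $\pi_i$ past $c_v$ in a non-commuting manner are precisely those excluded by the definition of $V = \Val^B([0,n-1] \setminus I)$.
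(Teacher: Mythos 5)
Your proof is correct and rests on the same two pillars as the paper's argument: the free $Cl_n$-module structure of $\mathcal M_I$ (to establish that $c\mapsto f_c$ is injective and that every element of $\End_{HCl_n^B(0)}(\mathcal M_I)$ has the form $f_c$), and triangularity of the $\pi_i$-action with respect to $\leq_I$ (to pin down which $c$ actually give $HCl_n^B(0)$-endomorphisms). The genuine difference lies in how you handle the forward containment $Cl_V\hookrightarrow\End_{HCl_n^B(0)}(\mathcal M_I)$. You realize $\mathcal M_I$ as a cyclic quotient $HCl_n^B(0)/J_I$ and verify the ideal stability $J_I c_v\subseteq J_I$ generator-by-generator using the defining relations of $HCl_n^B(0)$; the case analysis is governed by the fact that $v\in V$ forces $v-1\notin I$ and (for $v\le n-1$) $v\in I$. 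The paper instead works directly in the module, verifying the eigenvalue condition $\pi_i\cdot c_D\varepsilon_I\in\{-c_D\varepsilon_I,0\}$ for $D\subseteq V$ by observing that in the ribbon diagram of $c_D\varepsilon_I$, no barred box has a box immediately above or to its right, and then invoking the explicit combinatorial list of $\pi_i$-actions. Your algebraic check is more systematic in that it reduces immediately to the four displayed relations, while the paper's ribbon argument is more visual and in the end makes the eigenvalue computation transparent. For the reverse containment, you phrase the argument as the dimension count $\dim E_I=2^{|V|}$ whereas the paper argues contrapositively (if $c\notin Cl_V$ then the maximal term violates the eigenvalue condition), but these are the same maximal-term/triangularity argument dressed differently. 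One small omission: you should note explicitly that $c\mapsto f_c$ is a linear bijection and an algebra anti-isomorphism rather than a homomorphism, since right multiplications compose contravariantly; the paper's statement of the theorem is silent about the multiplicative structure, but you raise it, so it is worth getting the variance right.
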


\begin{proof}
Because $\mathcal M_I$ is freely generated by $\varepsilon_I$ as a $Cl_n$-module, each of the endomorphisms in $\End_{HCl_n^B(0)}(\mathcal M_I)$ must be of the form $f_c$ for a unique $c\in Cl_n$. Therefore, it suffices to prove that $f_c\in\End_{HCl_n^B(0)}(\mathcal M_I)$ if and only if $c\in Cl_V$. Because every element of $HCl_n^B(0)$ can be written in the form $x\pi_{i_1}\cdots\pi_{i_\ell}$ for some $x\in Cl_n$ and some $i_1,\ldots,i_\ell\in[0,n-1]$, we have $f_c\in\End_{HCl_n^B(0)}(\mathcal M_I)$ if and only if 
\begin{equation}\label{eq:End}
\pi_i\cdot c\,\varepsilon_I=\begin{cases} -c\,\varepsilon_I & \mbox{ if } i\in I \\
0 & \mbox{ if } i\not\in I
\end{cases}
\end{equation}
for every $i\in[0,n-1]$. 

First, suppose $c=c_D$ for some $D\subseteq V$. In the ribbon diagram of $c_D\varepsilon_I$, no box with a barred entry has a box immediately above it or immediately to its right. For each $i\in[0,n-1]$, it follows from the above explicit description of the action of $\pi_i$ on $c_D\varepsilon_I$ that \[\pi_i\cdot c_D\varepsilon_I=\begin{cases} -c_D\varepsilon_I & \mbox{ if } i\in I \\
0 & \mbox{ if } i\not\in I.
\end{cases}\]
Extending by linearity, we find that $f_c\in\End_{HCl_n^B(0)}(\mathcal M_I)$ whenever $c\in Cl_V$. 

To prove the converse, suppose $c\not\in Cl_V$. Let $c_D\varepsilon_I$ be a term in the support of $c\,\varepsilon_I$ such that $D\not\subseteq V$; we may assume that this term is chosen maximally with respect to the partial order $\leq_I$. Because $D\not\subseteq V$, there exists $i$ such that either $i\in I$ and $i+1\in D$ or $i\not\in I$ and $i\in D$. Then either $i\in I$ and $\alpha(i,I,D)=0$ or $i\not\in I$ and $\alpha(i,I,D)=-1$. Combining \Cref{lem:triangular} with the $\leq_I$-maximality of $c_D\varepsilon_I$, we find that \eqref{eq:End} does not hold; thus, $f_c\not\in \End_{HCl_n^B(0)}(\mathcal M_I)$. 
\end{proof}

We now proceed to characterize when two $HCl_n^B(0)$-modules $\mathcal M_I$ and $\mathcal M_J$ are isomorphic. 

\begin{theorem}
Let $I,J\subseteq[0,n-1]$. The $HCl_n^B(0)$-modules $\mathcal M_I$ and $\mathcal M_J$ are isomorphic if and only if \[\Peak^B([0,n-1]\setminus I)=\Peak^B([0,n-1]\setminus J)\quad\text{and}\quad 0\not\in I\triangle J.\] 
\end{theorem}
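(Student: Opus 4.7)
The plan is to combine \cref{thm:ch^B(Res)} with linear independence of the type-$B$ peak functions for the forward direction, and to construct explicit isomorphisms for the converse by reducing to a single ``ascent-shift'' modification.

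For the forward direction, an isomorphism $\mathcal{M}_I \cong \mathcal{M}_J$ restricts to an isomorphism of $H_n^B(0)$-modules, so the two sides have equal classes in $\mathcal{G}_0(H_n^B(0))$. Applying $\ch^B$ and \cref{thm:ch^B(Res)} yields $\Delta^B([0,n-1]\setminus I) = \Delta^B([0,n-1]\setminus J)$. Since Petersen proved that the type-$B$ peak functions $K_{(z,P)}$ are linearly independent, the indexing pairs must coincide, giving the equality of peak sets and the equality $\zeta([0,n-1]\setminus I) = \zeta([0,n-1]\setminus J)$; the latter is equivalent to $0 \notin I \triangle J$.

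For the converse, I would first observe that, for a fixed peak set $P$ and a fixed value $\zeta \in \{0,1\}$, the sets $I \subseteq [0,n-1]$ with $\Peak^B([0,n-1]\setminus I) = P$ and $\zeta([0,n-1]\setminus I) = \zeta$ are parametrized by the position of an ``ascent'' (a transition from $0$ to $1$ in the indicator sequence of $I$ on $[0,n-1]$) in each interval between consecutive peaks, together with analogous choices before the first peak (when $\zeta = 1$) and after the last peak. Any two such parameter choices are connected by moving a single ascent by one position, which corresponds to a modification of the form $J = I \cup \{k\}$ with $k \geq 1$, $k \notin I$, $k-1 \notin I$, and either $k+1 \in I$ or $k = n-1$. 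It therefore suffices to construct an isomorphism $\mathcal{M}_I \cong \mathcal{M}_J$ in this elementary case and iterate.

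For such an elementary swap I would define $\phi\colon \mathcal{M}_I \to \mathcal{M}_J$ by
\[\phi(c_D \varepsilon_I) = c_D(\varepsilon_J - c_k c_{k+1}\varepsilon_J),\]
so that $\phi$ is automatically left $Cl_n$-linear. Using the defining relations of $HCl_n^B(0)$ to rewrite $\pi_i c_D$ as $X \pi_i + Y$ with $X, Y \in Cl_n$, the verification that $\phi$ intertwines the $H_n^B(0)$-action reduces to checking $\pi_i \phi(\varepsilon_I) = \phi(\pi_i \varepsilon_I)$ for every $i \in [0,n-1]$, which is a finite case analysis against the action table of \cref{sec:0HeckeClifford}. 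The critical case is $i = k$: since $\pi_k \varepsilon_I = 0$, the identity $\pi_k(\varepsilon_J - c_k c_{k+1}\varepsilon_J) = 0$ is precisely what is needed, and it holds because the stacked all-unbarred and stacked doubly-barred configurations at positions $k,k+1$ are both sent to $-\varepsilon_J$ by $\pi_k$. The cases $i \in \{k-1, k+1\}$ follow from the local action table combined with the structural constraints on $I$ near $k$, and the remaining $i$ are immediate because $I$ and $J$ agree away from $k$. For bijectivity, matching dimensions reduce the statement to injectivity; writing $c_D c_k c_{k+1} = \sigma(D)\,c_{D \triangle \{k,k+1\}}$ with $\sigma(D) \in \{\pm 1\}$ and using $(c_k c_{k+1})^2 = -1$ to deduce $\sigma(D)\sigma(D \triangle \{k,k+1\}) = -1$, expanding $\phi(\sum_D \alpha_D c_D \varepsilon_I) = 0$ in the basis $\{c_D \varepsilon_J\}$ forces $\alpha_D = -\alpha_D$, hence $\alpha_D = 0$. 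I expect the main technical obstacle to be the case analysis verifying that $\phi$ intertwines $\pi_k$ while absorbing the local configuration change between the ribbons of $I$ and $J$; everything else is routine bookkeeping.
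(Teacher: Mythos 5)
Your proposal is correct and follows essentially the same strategy as the paper's proof: both directions are handled identically (forward via $\ch^B$ and \cref{thm:ch^B(Res)}; converse by reducing to a single-element modification $J = I \sqcup \{k\}$ and exhibiting an explicit $Cl_n$-linear intertwiner, which is $\pm c_D(c_k c_{k+1} - 1)\varepsilon_J$, the same map up to sign). The only differences are cosmetic: you spell out the connectivity argument for the reduction (which the paper elides), you verify injectivity via the coefficient-cancellation argument rather than invertibility of $c_k c_{k+1} - 1$ in $Cl_n$, and you check the critical $i=k$ case from the ribbon-diagram action table rather than by direct computation with the $HCl_n^B(0)$ relations — all equivalent to the paper's computations.
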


\begin{proof}
If $\mathcal M_I$ and $\mathcal M_J$ are isomorphic, then $\ch^B(\Res_{H_n^B(0)}(\mathcal M_I))=\ch^B(\Res_{H_n^B(0)}(\mathcal M_J))$, so it follows from \Cref{thm:ch^B(Res)} that $\Peak^B([0,n-1]\setminus I)=\Peak^B([0,n-1]\setminus J)$ and $0\not\in I\triangle J$. 

To prove the converse, it suffices to prove that $\mathcal M_I\cong\mathcal M_J$ whenever the following hold: 
\begin{itemize}
\item $\Peak^B([0,n-1]\setminus I)=\Peak^B([0,n-1]\setminus J)$;
\item $0\notin I\triangle J$; 
\item $J=I\sqcup\{k\}$ for some $k\in[n-1]\setminus I$. 
\end{itemize}
Note that if $k+1\in I\cap J$, then $k+1$ is in $\Peak^B([0,n-1]\setminus J)$ but not in $\Peak^B([0,n-1]\setminus I)$, which is impossible. Thus, $k+1\notin I\cap J$. Similarly, if $k-1\in I\cup J$, then $k$ is in $\Peak^B([0,n-1]\setminus I)$ but not in $\Peak^B([0,n-1]\setminus J)$, which is impossible. Thus, $k-1\not\in I\cup J$. This implies that the pieces of the ribbon diagrams representing $\varepsilon_I$ and $\varepsilon_J$ near box $k$ look like  
\[\begin{array}{ccc}
\ddots &  &  \\
 & \hspace{-.2cm}\begin{ytableau}
{\scriptstyle k-1} & {\scriptstyle k} & {\scriptstyle k+1}\\
\none & \none & {\scriptstyle k+2}
\end{ytableau} &  \\
 &  & \hspace{-.17cm}\ddots \end{array} \qquad\text{and}\qquad \begin{array}{ccc}
\ddots &  &  \\
 & \hspace{-.2cm}\begin{ytableau}
{\scriptstyle k-1} & {\scriptstyle k}\\
\none & {\scriptstyle k+1}\\ 
\none & {\scriptstyle k+2}
\end{ytableau} &  \\
 &  & \hspace{-.17cm}\ddots \end{array},\] respectively. 
 
 Let $\eta=(c_kc_{k+1}-1)\varepsilon_J\in\mathcal M_J$, and define a linear map $f\colon \mathcal M_I\to\mathcal M_J$ by $f(x\varepsilon_I)=x\eta$ for all $x\in Cl_n$. The map $f$ is injective because $c_kc_{k-1}-1$ is invertible in $Cl_n$. Since $\mathcal M_I$ and $\mathcal M_J$ have the same dimension, $f$ is an isomorphism of $Cl_n$-modules. To show that $f$ is an isomorphism of $HCl_n^B(0)$-modules, we need to check that $\pi_i\cdot \eta=f(\pi_i\cdot\varepsilon_I)$ for all $i\in[0,n-1]$. We consider several cases. 

Suppose $i\notin\{k-1,k,k+1\}$. Then $\pi_i$ commutes with $c_kc_{k+1}-1$, so \[\pi_i\cdot\eta=(c_kc_{k+1}-1)\pi_i\cdot\varepsilon_J=f(\pi_i\cdot\varepsilon_I),\] as desired. 

Suppose $k=1$ and $i=0$. Then $0\notin I\cup J$, so \[\pi_0\cdot\eta=\pi_0(c_1c_2-1)\varepsilon_J=(\sqrt{-1}c_2\pi_0-\pi_0)\cdot\varepsilon_J=0=f(\pi_0\cdot\varepsilon_I),\] as desired. 

Suppose $k>1$ and $i=k-1$. Then $k-1\notin I\cup J$, so \[\pi_{k-1}\cdot\eta=(c_{k-1}c_{k+1}\pi_{k-1}-\pi_{k-1})\cdot\varepsilon_J=0=f(\pi_{k-1}\cdot \varepsilon_I),\] as desired. 

Suppose $i=k$. Then 
\begin{align*}
\pi_k\cdot\eta&=((c_{k+1}\pi_k+c_{k+1}-c_k)c_{k+1}-\pi_k)\cdot\varepsilon_J \\ 
&=(-c_kc_{k+1}\pi_k-1-c_kc_{k+1}-\pi_k)\cdot\varepsilon_J \\ 
&=-c_kc_{k+1}(-\varepsilon_J)-\varepsilon_J-c_kc_{k+1}\varepsilon_J-(-\varepsilon_J) \\ 
&=0 \\ 
&=f(\pi_k\cdot\varepsilon_I),
\end{align*} as desired. 

Finally, suppose $i=k+1$. Then $k+1\in I\cap J$, so 
\begin{align*}
\pi_{k+1}\cdot\eta&=(c_k(c_{k+2}\pi_{k+1}+c_{k+2}-c_{k+1})-\pi_{k+1})\cdot\varepsilon_J \\ 
&=(c_kc_{k+2}\pi_{k+1}+c_kc_{k+2}-c_kc_{k+1}-\pi_{k+1})\cdot\varepsilon_J \\ 
&=c_kc_{k+2}(-\varepsilon_J)+c_kc_{k+2}\varepsilon_J-c_kc_{k+1}\varepsilon_J-(-\varepsilon_J) \\ 
&=-\eta \\ 
&=f(\pi_k\cdot \varepsilon_I),
\end{align*} as desired. 
\end{proof} 

It would be interesting to determine the simple $HCl_n^B(0)$-modules. We note that the algebra $HCl_n^B(0)$ is not a superalgebra and that the approach used in \cite[Section 5.4]{BHT} for the simple $HCl_n(0)$-modules does not carry over.

\section{Type-$B$ Quasisymmetric Characteristics from $0$-Hecke--Clifford Modules}\label{sec:CliffordApplications}
In this section, we prove a result that allows us to compute the type-$B$ quasisymmetric characteristic of $\Res_{H_n^B(0)}\left(\Ind_{H_n^B(0)}^{HCl_n^B(0)}({\bf N})\right)$, where ${\bf N}$ is an $H_n^B(0)$-module of a special form. In particular, we will see that one can apply this theorem when ${\bf N}$ is a module obtained via the type-$B$ ascent compatibility framework. As a different application, we will provide a representation-theoretic interpretation of the shifted domino functions $H_\lambda$ from \cref{sec:domino}. 

Let $\mathscr X$ and $\mathscr Y$ be finite sets with $\mathscr X\subseteq \mathscr Y$. Let $\DD:\mathscr X\to2^{[0,n-1]}$ be a function, where $2^{[0,n-1]}$ is the power set of $[0,n-1]$. Let $f_0,\ldots,f_{n-1}:\mathscr X\to \mathscr Y$ be functions. Define linear operators $\pi_0,\ldots,\pi_{n-1}$ on $\mathbb C\mathscr X$ by letting \[\pi_i(y) = \begin{cases} -y & \mbox{ if } i\in\DD(y) \\
                            0 & \mbox{ if } i\notin\DD(y)\mbox{ and }f_i(y)\notin \mathscr X \\
                            f_i(y) & \mbox{ if } i\notin\DD(y)\mbox{ and }f_i(y)\in \mathscr X
\end{cases}\]
and extending by linearity. For $y,y'\in \mathscr X$, let us write $y\preceq y'$ if there is a (possibly empty) sequence $i_1,\ldots,i_r$ of indices in $[0,n-1]$ such that $y'=\pi_{i_r}\cdots\pi_{i_1}(y)$.

\begin{theorem}\label{thm:general_application}
Suppose the operators $\pi_0,\ldots,\pi_{n-1}$ define an action of $H_n^B(0)$ on $\mathbb C\mathscr X$. If the relation $\preceq$ is a partial order, then \[\ch^B\left(\Res_{H_n^B(0)}\left(\Ind_{H_n^B(0)}^{HCl_n^B(0)}(\mathbb C\mathscr X)\right)\right)=\sum_{y\in \mathscr X}\Delta^B([0,n-1]\setminus\DD(y)).\] 
\end{theorem}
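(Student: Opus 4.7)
The plan is to mirror the filtration arguments used in the proofs of \Cref{thm:ascent_compatible_B,thm:dominocharacteristic}, combining them with the induction--restriction computation from \Cref{thm:ch^B(Res)}. There are three steps: first, build a composition series of $\mathbb C\mathscr X$ as an $H_n^B(0)$-module whose subquotients are the simples $\mathcal S_{\DD(y_k)}^B$; second, apply the exact functors $\Ind_{H_n^B(0)}^{HCl_n^B(0)}$ and $\Res_{H_n^B(0)}$ to produce a filtration of the target whose subquotients are $\Res_{H_n^B(0)}(\mathcal M_{\DD(y_k)})$; and third, sum the characteristics using \Cref{thm:ch^B(Res)}.

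For the first step, let $y_m,\ldots,y_1$ be a linear extension of $(\mathscr X,\preceq)$, in the sense that $j<i$ whenever $y_i \preceq y_j$; in particular, $y_1$ is a maximal element of $(\mathscr X,\preceq)$. Set ${\bf N}_k=\mathrm{span}\{y_1,\ldots,y_k\}$ for $0\leq k\leq m$. Whenever $\pi_i(y_k)=f_i(y_k)\in\mathscr X$, the definition of $\preceq$ gives $y_k\preceq f_i(y_k)$, so $f_i(y_k)=y_\ell$ with $\ell<k$. Hence each ${\bf N}_k$ is an $H_n^B(0)$-submodule, and in ${\bf N}_k/{\bf N}_{k-1}$ the class $\bar y_k$ is sent by $\pi_i$ to $-\bar y_k$ if $i\in\DD(y_k)$ and to $0$ otherwise, which shows ${\bf N}_k/{\bf N}_{k-1}\cong\mathcal S_{\DD(y_k)}^B$.

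For the second step, I need exactness of $\Ind_{H_n^B(0)}^{HCl_n^B(0)}=HCl_n^B(0)\otimes_{H_n^B(0)}-$, which follows because $HCl_n^B(0)$ is free as a right $H_n^B(0)$-module on $\{c_D:D\subseteq[n]\}$. Indeed, the cross-relations in the definition of $HCl_n^B(0)$ allow any monomial to be rewritten with all Clifford generators on the left of all Hecke generators, and this freeness is precisely what guarantees that $\{c_D\varepsilon_I\}$ is a basis for $\mathcal M_I$ in \Cref{sec:0HeckeClifford}. Since $\Res_{H_n^B(0)}$ is trivially exact, applying both functors to the filtration $0={\bf N}_0\subseteq{\bf N}_1\subseteq\cdots\subseteq{\bf N}_m=\mathbb C\mathscr X$ yields a filtration of $\Res_{H_n^B(0)}(\Ind_{H_n^B(0)}^{HCl_n^B(0)}(\mathbb C\mathscr X))$ by $H_n^B(0)$-submodules whose $k$th subquotient is $\Res_{H_n^B(0)}(\mathcal M_{\DD(y_k)})$.

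For the final step, additivity of $\ch^B$ on short exact sequences, combined with \Cref{thm:ch^B(Res)}, gives
\[\ch^B\!\left(\Res_{H_n^B(0)}\!\left(\Ind_{H_n^B(0)}^{HCl_n^B(0)}(\mathbb C\mathscr X)\right)\right) = \sum_{k=1}^m \ch^B(\Res_{H_n^B(0)}(\mathcal M_{\DD(y_k)})) = \sum_{y\in\mathscr X}\Delta^B([0,n-1]\setminus\DD(y)),\]
as desired. The only step with real content is the exactness of induction (equivalently, the freeness of $HCl_n^B(0)$ over $H_n^B(0)$); once that is in place, the rest is the same filtration bookkeeping already used to prove \Cref{thm:ascent_compatible_B,thm:dominocharacteristic}.
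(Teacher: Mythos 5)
Your proof is correct, and it takes a genuinely different route from the paper's. The paper works directly inside the induced module: it takes a linear extension $y_1,\ldots,y_m$ of $(\mathscr X,\preceq)$, defines $\widetilde{\bf N}_k=\mathrm{span}\{c_Dy_j : D\subseteq[n],\ j\leq k\}$ as an $HCl_n^B(0)$-submodule filtration, and then constructs an explicit map $\psi(c_D\overline y_k)=c_D\varepsilon_{\DD(y_k)}$, verifying by hand (using the rewriting $\pi_i c_D=\sum_E c_E(\gamma_E+\delta_E\pi_i)$) that $\psi$ is an $HCl_n^B(0)$-module isomorphism from $\widetilde{\bf N}_k/\widetilde{\bf N}_{k-1}$ onto $\mathcal M_{\DD(y_k)}$. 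You instead filter $\mathbb C\mathscr X$ \emph{first}, exactly as in \Cref{thm:ascent_compatible_B} and \Cref{thm:dominocharacteristic}, obtaining a composition series with subquotients $\mathcal S^B_{\DD(y_k)}$, and then push the whole filtration through $\Ind_{H_n^B(0)}^{HCl_n^B(0)}$ using exactness. This is conceptually cleaner: it makes the proof a corollary of \Cref{thm:ch^B(Res)} plus the filtration lemma already established, and it explains \emph{why} $\widetilde{\bf N}_k/\widetilde{\bf N}_{k-1}\cong\mathcal M_{\DD(y_k)}$ rather than verifying it term by term. The trade-off is that you must invoke exactness of induction, i.e.\ flatness (in fact freeness) of $HCl_n^B(0)$ as a right $H_n^B(0)$-module. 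The rewriting argument you cite only establishes that $\{c_D\}$ spans $HCl_n^B(0)$ over $H_n^B(0)$, not linear independence; however, the paper makes the logically equivalent unproved assertion that $\{c_D\varepsilon_I\}$ and $\{c_Dy\}$ are bases of $\mathcal M_I$ and of the induced module, so your appeal is at the same level of rigor as the paper's. In short: same inputs, different packaging — the explicit $\psi$-verification is replaced by a homological-algebra observation, at the cost of naming (but not proving) the freeness fact that both arguments ultimately rest on.
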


\begin{proof}
A linear basis of $\Ind_{H_n^B(0)}^{HCl_n^B(0)}(\mathbb C\mathscr X)$ is $\{c_Dy:D\subseteq[0,n-1],\, y\in \mathscr X\}$. Let $y_1,\ldots,y_m$ be a linear extension of the poset $(\mathscr X,\preceq)$. For $0\leq k\leq m$, let $\widetilde{\bf N}_k=\text{span}\{c_Dy_j:D\subseteq[0,n-1],\, j\leq k\}$; then $\widetilde {\bf N}_k$ is an $HCl_n^B(0)$-module because $\preceq$ is a partial order. For $1\leq k\leq m$, a linear basis for the quotient module $\widetilde {\bf N}_k/\widetilde {\bf N}_{k-1}$ is $\{c_D\overline y_k:D\subseteq[0,n-1]\}$, where $\overline y_k=y_k+\widetilde {\bf N}_{k-1}$. We have \[\ch^B\left(\Res_{H_n^B(0)}\left(\Ind_{H_n^B(0)}^{HCl_n^B(0)}(\mathbb C\mathscr X)\right)\right)=\sum_{k=1}^m\ch^B\left(\Res_{H_n^B(0)}(\widetilde{\bf N}_k/\widetilde{\bf N}_{k-1})\right).\] Hence, the desired result will follow from \Cref{thm:ch^B(Res)} if we can prove that the $HCl_n^B(0)$-module $\widetilde {\bf N}_k/\widetilde {\bf N}_{k-1}$ is isomorphic to $\mathcal M_{\DD(y_k)}$. 

Define a map $\psi\colon\widetilde {\bf N}_k/\widetilde {\bf N}_{k-1}\to\mathcal M_{\DD(y_k)}$ by letting $\psi(c_D\overline y_k)=c_D\varepsilon_{\DD(y_k)}$ and extending by linearity. Then $\psi$ is clearly a linear isomorphism, so we just need to show that it respects the module structure. For $r\in[n]$ and $D\subseteq[n]$, let $p_r(D)=|[r]\cap D|$. We have \[c_r\psi(c_D\overline y_k)=c_rc_D\varepsilon_{\DD(y_k)}=(-1)^{p_r(D)}c_{D\triangle\{r\}}\varepsilon_{\DD(y_k)}=\psi((-1)^{p_r(D)}c_{D\triangle\{r\}}\overline y_k)=\psi(c_rc_D\overline y_k).\] This shows that $\psi$ commutes with the action of the generators $c_1,\ldots,c_n$. 

To complete the proof, we must show that $\pi_i\psi(c_D\overline y_k)=\psi(\pi_ic_D\overline y_k)$ for all $i\in[0,n-1]$ and $D\subseteq[n]$. Fix such an index $i$ and a subset $D$. Using the defining relations of $HCl_n^B(0)$, we can write $\pi_ic_D$ in the form $\sum_{E\subseteq[n]}c_E(\gamma_E+\delta_E\pi_i)$, where $\gamma_E,\delta_E\in\mathbb C$ for each $E\subseteq[n]$. If $i\in\DD(y_k)$, then we have $\pi_i\varepsilon_{\DD(y_k)}=-\varepsilon_{\DD(y_k)}=\psi(-\overline y_k)=\psi(\pi_i\overline y_k)$. If $i\not\in\DD(y_k)$, then  $\pi_i\varepsilon_{\DD(y_k)}=0$, and it follows from the definition of the partial order $\preceq$ that $\pi_iy_k\in\text{span}\{y_1,\ldots,y_{k-1}\}$ so that $\pi_i\overline y_k$ is $0$ in $\widetilde{\bf N}_k/\widetilde{\bf N}_{k-1}$. In either case, we have $\pi_i\varepsilon_{\DD(y_k)}=\psi(\pi_i\overline y_k)$. Hence, for each $E\subseteq [n]$, we have $c_E(\gamma_E+\delta_E\pi_i)\varepsilon_{\DD(y_k)}=\psi(c_E(\gamma_E+\delta_E\pi_i)\overline y_k)$. Thus, 
\begin{align*}
\pi_i\psi(c_D\overline y_k)&=\pi_ic_D\varepsilon_{\DD(y_k)} \\ &=\left(\sum_{E\subseteq[n]}c_E(\gamma_E+\delta_E\pi_i)\right)\varepsilon_{\DD(y_k)} \\ &=\psi\left(\left(\sum_{E\subseteq[n]}c_E(\gamma_E+\delta_E\pi_i)\right)\overline y_k\right) \\ &=\psi(\pi_ic_D\overline y_k).
\qedhere
\end{align*}
\end{proof}

\begin{corollary}
Let $X$ be an ascent-compatible subset of $B_n$, and let $\mathbb CX$ be the associated $H_n^B(0)$-module. Then \[\ch^B\left(\Res_{H_n^B(0)}\left(\Ind_{H_n^B(0)}^{HCl_n^B(0)}(\mathbb CX)\right)\right)=\sum_{x\in X}\Delta^B([0,n-1]\setminus\Des(x)).\] 
\end{corollary}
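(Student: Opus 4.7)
The plan is to derive this corollary as an immediate application of \cref{thm:general_application}, using the ascent-compatible setup to match the hypotheses of that theorem. Set $\mathscr X = X$, $\mathscr Y = B_n$, $\DD(x) = \Des(x)$, and $f_i(x) = s_i x$ for $i \in [0,n-1]$. With these choices, the operators $\pi_i$ defined in the statement of \cref{thm:general_application} coincide exactly with the operators used in the ascent-compatible construction from \cref{sec:ascentcompatible}. By \cref{thm:ascentcompatible}, these operators define an action of $H_n^B(0)$ on $\mathbb{C} X$, so the first hypothesis of \cref{thm:general_application} holds.

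The substantive step is to verify that the relation $\preceq$ on $X$ is a partial order. Reflexivity and transitivity are immediate from the definition, so only antisymmetry needs attention. The plan is to argue this via the Coxeter length function $\ell$: whenever $\pi_i(x)$ produces a new basis element $y \ne x$, we must be in the third case of the definition of $\pi_i$, where $i \notin \Des(x)$ and $y = s_i x \in X$. Since $s_i$ is a left ascent of $x$, we have $\ell(s_i x) = \ell(x) + 1$. Iterating, any chain $x = x_0, x_1, \ldots, x_r$ of basis elements arising from successive applications of the $\pi_i$'s with $x_{j+1} \ne x_j$ satisfies $\ell(x_{j+1}) = \ell(x_j) + 1$. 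Thus $x \preceq y$ with $x \neq y$ forces $\ell(y) > \ell(x)$, which yields antisymmetry.

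Once $\preceq$ is shown to be a partial order, \cref{thm:general_application} applies directly and gives
\[\ch^B\!\left(\Res_{H_n^B(0)}\!\left(\Ind_{H_n^B(0)}^{HCl_n^B(0)}(\mathbb C X)\right)\right) = \sum_{x \in X}\Delta^B([0,n-1]\setminus\DD(x)) = \sum_{x \in X}\Delta^B([0,n-1]\setminus\Des(x)),\]
which is the claimed identity. No step here is a genuine obstacle: the verification that $\preceq$ is a partial order is the only thing requiring care, and the length-function argument handles it uniformly for every Coxeter system, so the reduction to \cref{thm:general_application} is completely routine.
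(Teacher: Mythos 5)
Your proof takes exactly the same route as the paper---apply \cref{thm:general_application} with $\mathscr X=X$, $\mathscr Y=B_n$, $f_i(x)=s_ix$, and $\DD(x)=\Des(x)$---while the paper states this application in a single sentence and leaves the hypothesis checks implicit. Your extra verification that $\preceq$ is a partial order via the length function (each nontrivial application of $\pi_i$ sends $x$ to $s_ix$ with $\ell(s_ix)=\ell(x)+1$) is correct and is a worthwhile detail that the paper omits.
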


\begin{proof}
Simply apply \cref{thm:general_application} with $\mathscr X=X$, $\mathscr Y=B_n$, $f_i(x)=s_ix$, and $\DD(x)=\Des(x)$. 
\end{proof}

Let $\lambda$ be a partition whose $2$-quotient $(\mu,\nu)=((\mu_1, \ldots , \mu_p),(\nu_1, \ldots, \nu_q))$ satisfies $\mu_p\ge p$ and $\nu_q\ge q$. Let $\widehat\lambda$ denote the conjugate of $\lambda$. Recall from \cref{subsec:shifted} the definition of the set $\SShDT(\lambda)$ of standard shifted domino tableaux of shape $\lambda$. If we reflect a standard shifted domino tableau $Q$ of shape $\lambda$ across the main diagonal, we obtain a filled tiling $\widehat Q$ of $\widehat\lambda$ that we call a \dfn{conjugated standard shifted domino tableau} of shape $\widehat\lambda$. See \cref{fig:conjugated} for an example. Let $\widehat{\SShDT}(\widehat\lambda)$ denote the set of conjugated standard shifted domino tableaux of shape $\widehat\lambda$. 

\begin{figure}[ht]
\begin{center}\includegraphics[height=2.493cm]{HeckePIC3}\qquad\qquad\qquad\qquad \includegraphics[height=3.479cm]{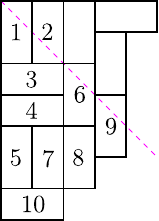}
  \end{center}
  \caption{A standard shifted domino tableau $Q$ of shape $\lambda=(7,7,6,5,1)$ (left) and its corresponding conjugated standard shifted domino tableau $\widehat Q$ of shape ${\widehat\lambda=(5,4,4,4,4,3,2)}$ (right).}\label{fig:conjugated}
\end{figure}

Descents of conjugated standard shifted domino tableaux are defined just as for standard shifted domino tableaux. Let $\widehat Q\in\widehat{\SShDT}(\widehat\lambda)$, and let $m$ be the number of dominoes weakly below the main diagonal in $\widehat Q$. We say $i\in [0,m-1]$ is a \dfn{descent} of $\widehat Q$ if $i=0$ and the domino in $\widehat Q$ containing $1$ is vertical or if $i>0$ and the domino in $\widehat Q$ containing $i+1$ is strictly lower than the domino in $\widehat Q$ containing $i$. Let $\Des(\widehat Q)$ be the set of descents of $\widehat Q$. Note that if $\widehat Q$ is the conjugate of the standard shifted domino tableau $Q\in\SShDT(\lambda)$, then $\Des(\widehat Q)=[0,n-1]\setminus\Des(Q)$. 

For $0\leq i\leq n-1$ and $Q\in\SShDT(\lambda)$, let $s_i(\widehat Q)=\widehat{s_i(Q)}$. Define operators $\pi_0, \pi_1, \ldots \pi_{n-1}$ on $\mathbb C\widehat{\SShDT}(\widehat\lambda)$ by
\[\pi_i(\widehat Q) = \begin{cases} -\widehat Q & \mbox{ if } i\in\Des(\widehat Q) \\
                            0 & \mbox{ if } i\notin\Des(\widehat Q)\mbox{ and }s_i(\widehat Q)\notin\widehat{\SShDT}(\widehat\lambda) \\
                            s_i(\widehat Q) & \mbox{ if } i\notin\Des(\widehat Q)\mbox{ and }s_i(\widehat Q)\in\widehat{\SShDT}(\widehat\lambda)
\end{cases}\] for each $Q\in\SShDT(\lambda)$. 

\begin{theorem}
Let $\lambda$ be a partition whose $2$-quotient $(\mu,\nu)=((\mu_1, \ldots , \mu_p),(\nu_1, \ldots, \nu_q))$ satisfies $\mu_p\ge p$ and $\nu_q\ge q$. The operators $\pi_0, \pi_1, \ldots , \pi_{n-1}$ define an action of $H_n^B(0)$ on $\mathbb C\widehat{\SShDT}(\widehat\lambda)$. Moreover, \[\ch^B\left(\Res_{H_n^B(0)}\left(\Ind_{H_n^B(0)}^{HCl_n^B(0)}(\mathbb C\widehat{\SShDT}(\widehat\lambda))\right)\right)=H_\lambda.\] 
\end{theorem}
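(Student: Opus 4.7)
The strategy is to verify the hypotheses of \cref{thm:general_application} for $\mathscr{X}=\widehat{\SShDT}(\widehat\lambda)$, $\DD(\widehat Q)=\Des(\widehat Q)$, $f_i=s_i$ (extended arbitrarily to an ambient finite set $\mathscr{Y}\supseteq\mathscr{X}$), and then to simplify the resulting sum using \cref{cor:H_lambda}.

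To establish the first assertion — that the $\pi_i$ satisfy the defining relations of $H_n^B(0)$ — I would follow the case analysis in the proof of \cref{thm:dominoHecke}. Because $s_i(\widehat Q)=\widehat{s_i(Q)}$ and $\Des(\widehat Q)=[0,n-1]\setminus\Des(Q)$, this action is the combinatorial dual of the $\SDT$ action in \cref{thm:dominoHecke}: the role of ``descent'' in $Q$ is played by ``ascent'' in $\widehat Q$, but the underlying check that $s_i$ is an involution, that $s_i$ and $s_j$ commute for $|i-j|\geq 2$, that the braid relations hold for $i\geq 1$, and that the type-$B$ braid relation $\pi_0\pi_1\pi_0\pi_1=\pi_1\pi_0\pi_1\pi_0$ is satisfied all reduce to case analyses on the relative positions of the two or three relevant dominoes of $\widehat Q$. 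The $\pi_0$-relations in particular depend only on the configuration of $\dom_1(\widehat Q),\dom_2(\widehat Q),\dom_3(\widehat Q)$ relative to the northwest-most $2\times 2$ square of $\widehat\lambda$, and the hypothesis $\mu_p\geq p$, $\nu_q\geq q$ ensures that the shifted-tiling constraint transports correctly under conjugation.

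To apply \cref{thm:general_application}, I also need that the preorder $\preceq$ on $\widehat{\SShDT}(\widehat\lambda)$ (defined preceding that theorem) is antisymmetric. Whenever $\pi_i(\widehat Q)=s_i(\widehat Q)\neq 0$ with $\widehat Q'=s_i(\widehat Q)$, the entry $i+1$ ends up in a strictly lower (or leftward) domino than the one it occupied in $\widehat Q$, forcing strict monotonicity of a positional statistic such as $\sigma(\widehat Q)=\sum_{k=1}^n k\cdot\mathrm{row}(\dom_k(\widehat Q))$. This rules out non-trivial $\preceq$-cycles. With both hypotheses checked, \cref{thm:general_application} gives
\[
\ch^B\!\left(\Res_{H_n^B(0)}\!\left(\Ind_{H_n^B(0)}^{HCl_n^B(0)}(\mathbb C\widehat{\SShDT}(\widehat\lambda))\right)\right)
=\sum_{\widehat Q\in\widehat{\SShDT}(\widehat\lambda)}\Delta^B\!\bigl([0,n-1]\setminus\Des(\widehat Q)\bigr).
\]
Since conjugation is a bijection $\SShDT(\lambda)\to\widehat{\SShDT}(\widehat\lambda)$ sending $Q$ to $\widehat Q$ and $[0,n-1]\setminus\Des(\widehat Q)=\Des(Q)$, the right-hand side equals $\sum_{Q\in\SShDT(\lambda)}\Delta^B(\Des(Q))$, which is $H_\lambda$ by \cref{cor:H_lambda}.

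The main obstacle will be the configurational case analysis in the first step. As in \cref{thm:dominoHecke}, the $\pi_0\pi_2=\pi_2\pi_0$ relation and the type-$B$ braid relation $\pi_0\pi_1\pi_0\pi_1=\pi_1\pi_0\pi_1\pi_0$ produce the largest number of sub-cases, and the shifted setting adds minor complications tied to how dominoes interact with the reflected main diagonal of $\widehat\lambda$. The remaining pieces (the antisymmetry argument and the final identification with $H_\lambda$) are essentially formal once the relations are in hand.
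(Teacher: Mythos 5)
Your proposal matches the paper's approach: apply \cref{thm:general_application} with $\mathscr X=\widehat{\SShDT}(\widehat\lambda)$, $f_i$ implementing $s_i$ (with values outside $\mathscr X$ when $s_i(\widehat Q)$ is not a valid tableau), and $\DD=\Des$, and then simplify via the conjugation bijection and \cref{cor:H_lambda}. The paper likewise defers the verification of the $H_n^B(0)$-relations to the similar case analysis already done for \cref{thm:dominoHecke}, so the overall route is the same. One small caution about your antisymmetry argument: with the most naive convention for $\mathrm{row}(\dom_k)$ (e.g.\ sum of occupied row indices), the statistic $\sigma(\widehat Q)=\sum_k k\cdot\mathrm{row}(\dom_k(\widehat Q))$ decreases under a nontrivial application of $\pi_0$ (which flips a pair of horizontal dominoes to vertical) but increases under a nontrivial application of $\pi_i$ for $i\ge 1$, so it is not monotone as stated. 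The fix is easy (for instance, take $\mathrm{row}$ to mean the maximum row index occupied by the domino), but your claim that ``the entry $i+1$ ends up in a strictly lower (or leftward) domino'' does not literally apply to $\pi_0$, where orientations rather than labels are exchanged, so this step needs the convention spelled out. With that adjustment the antisymmetry argument goes through, and the rest of the proof is correct and in the same spirit as the paper's.
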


\begin{proof}
The proof that $\pi_0, \pi_1, \ldots , \pi_{n-1}$ define an action of $H_n^B(0)$ is very similar to that of \cref{thm:dominoHecke}, so we omit it. To prove the desired identity involving the shifted domino function $H_\lambda$, we appeal to \cref{thm:general_application}. Let $\mathscr X=\widehat{\SShDT}(\widehat\lambda)$ and $\mathscr Y=\widehat{\SShDT}(\widehat\lambda)\cup\{\bullet\}$, where $\bullet$ is a formal symbol. For $0\leq i\leq n-1$, define $f_i\colon\mathscr X\to\mathscr Y$ by \[f_i(\widehat Q)=\begin{cases} s_i(\widehat Q) & \mbox{ if } s_i(\widehat Q)\in\widehat{\SShDT}(\widehat\lambda) \\
\bullet & \mbox{ if } s_i(\widehat Q)\notin\widehat{\SShDT}(\widehat\lambda).
\end{cases}\] Finally, let us define $\DD\colon\mathscr X\to 2^{[0,n-1]}$ by $\DD(\widehat Q)=\Des(\widehat Q)$. The desired result now follows from \cref{cor:H_lambda}, \cref{thm:general_application}, and the observation that $\Des(\widehat Q)=[0,n-1]\setminus\Des(Q)$ for each ${Q\in\SShDT(\lambda)}$. 
\end{proof}

\section*{Acknowledgments}
Colin Defant was supported by the National Science Foundation under Award No.\ 2201907 and by a Benjamin Peirce Fellowship at Harvard University. Dominic Searles was supported by the Marsden Fund, administered by the Royal Society of New Zealand Te Ap\=arangi.

\bibliography{TypeB}{}
\bibliographystyle{abbrv}

\end{document}